\long\outer\def\COMMENT#1{}
\DeclareMathAlphabet{\mathpzc}{OT1}{pzc}{m}{it}
\DeclareMathAlphabet{\mathpzc}{OT1}{pzc}{m}{it}
\theoremstyle{plain}
\newtheorem{theorem}{Theorem}[section]
\newtheorem{proposition}[theorem]{Proposition}
\newtheorem{lemma}[theorem]{Lemma}
\newtheorem{definition}[theorem]{Definition}
\newtheorem{remark}[theorem]{Remark}
\newtheorem{remarks}[theorem]{Remarks}
\numberwithin{equation}{section} 
\newcommand{\catset}{{\mathbf{Set}}}
\newcommand{\qtext}[1]{\quad\text{#1}\quad}
\newcommand{\qqtext}[1]{\qquad\text{#1}\qquad}
\newcommand{\cathpo}{\mathbf{HPO}} 
\newcommand{\adj}{\dashv}
\newcommand{\ent}{\vdash} 
\newcommand{\qdot}{\,.\hspace{1mm}}
\newcommand{\equi}{\Leftrightarrow}
\newcommand{\op}{\mathsf{op}} 
\newcommand{\setopto}{\catset^\op\to}
\newcommand{\trip}{{\EuScript{P}}}
\newcommand{\prop}{{\mathsf{Prop}}}
\newcommand{\triptr}{{\mathsf{tr}}}
\newcommand{\kkpi}{\mathsf{\Pi}}
\newcommand{\kkqp}{\mathsf{QP}}
\newcommand{\push}{\mathclose\cdot} 
\newcommand{\orth}{\mathrel\bot} 
\newcommand{\comk}{\mathsf{k}} 
\newcommand{\coms}{\mathsf{s}}
\newcommand{\coma}{\mathsf{a}}
\newcommand{\comd}{\mathsf{d}}
\newcommand{\come}{\mathsf{e}} 
\newcommand{\comb}{\mathsf{b}}
\newcommand{\comw}{\mathsf{w}} 
\newcommand{\comc}{\mathsf{c}}
\newcommand{\comp}{\mathsf{p}} 
\newcommand{\compz}{\mathsf{p_0}}
\newcommand{\compo}{\mathsf{p_1}}
\newcommand{\imp}{\Rightarrow}
\newcommand{\subs}{\subseteq} 
\newcommand{\sups}{\supseteq}
\newcommand{\ad}[1]{\emph{Ad #1 --}}
\newcommand{\eincl}{\hookrightarrow}
\newcommand{\power}{\mathcal{P}} 
\newcommand{\pobot}{\power_\bot}
\newcommand{\msep}{\mathrel|\,} 
\newcommand{\setof}[2]{\{#1\msep #2\}} 
\newcommand{\defequi}{:\Leftrightarrow}
\newcommand{\catord}{\mathbf{Ord}}
\newcommand{\setord}{\catset^\op\to\catord}
\newcommand{\adcl}{\mathopen\downarrow}
\newcommand{\aucl}{\mathopen\uparrow}
\newcommand{\id}{\mathrm{id}}
\newcommand{\fifa}{\EuScript{A}} 
\newcommand{\fifc}{\EuScript{C}} \newcommand{\fifd}{\EuScript{D}}
\newcommand{\fife}{\EuScript{E}}
\newcommand{\fibc}{\mathscr{C}} 
\newcommand{\catcat}{{\mathbf{Cat}}}
\newcommand{\catslat}{\mathbf{SLat}}
\newcommand{\catiord}{\mathbf{IOrd}}
\newcommand{\pullbackcorner}[1][dr]{\save*!/#1-1.2pc/#1:(-1,1)@^{|-}\restore}
\newcommand{\rapp}{\mathrm{app}} \newcommand{\rimp}{\mathrm{imp}}
 \newcommand{\comt}{\mathsf{t}}
\newcommand{\comf}{\mathsf{f}} \newcommand{\comi}{\mathsf{i}}
\newcommand{\qdefequi}{\quad:\Leftrightarrow\quad}
\numberwithin{equation}{section}
\newcommand{\rpaxk}{(PK)}
\newcommand{\rpaxs}{(\hspace*{1pt}P\hspace*{.5pt}S\hspace*{.5pt})}
\newcommand{\rpaxc}{(PC)} \newcommand{\rpaxha}{(\hspace*{.5pt}PA)}
\newcommand{\rpaxe}{(PE)}
\newcommand{\cord}{{(C,\leq)}} \newcommand{\dord}{{(D,\leq)}}
\newcommand{\eord}{{(E,\leq)}}
\newcommand{\cs}{{\bf \mathsf {{s}}\,}}
\newcommand{\ck}{{\bf \mathsf {{k}}\,}}
\newcommand{\cCC}{{\bf \mathsf {\scriptstyle{CC}\,}}}
\newcommand{\ce}{{\bf \mathsf {e\,}}}
\newcommand{\cb}{{\bf \mathsf {b\,}}}
\newcommand{\cc}{{\bf \mathsf {c\,}}}
\newcommand{\cS}{{\bf \mathsf {\scriptstyle{S}}\,}}
\newcommand{\cK}{{\bf \mathsf {\scriptstyle{K}}\,}}
\newcommand{\cB}{{\bf \mathsf {\scriptstyle{B}}\,}}
\newcommand{\cC}{{\bf \mathsf {\scriptstyle{C}}\,}}
\newcommand{\cI}{{\bf \mathsf {\scriptstyle{I}}\,}}
\newcommand{\cE}{{\bf \mathsf {\scriptstyle{E}}\,}}
\newcommand{\ioca}{\ensuremath{\mathcal{{}^IOCA}}}
\newcommand{\koca}{\ensuremath{\mathcal{{}^KOCA}}}
\newcommand{\oca}{\ensuremath{\mathcal{OCA}}}
\newcommand{\pca}{\ensuremath{\mathcal{PCA}}}
\begin{document}
\begin{abstract}
We consider different classes of combinatory structures related to
Krivine realizability. We show, in the precise sense that they give
rise to the same class of triposes, that they are equivalent for the
purpose of modeling higher-order logic.  We center our attentions in
the role of a special kind of Ordered Combinatory Algebras-- that we
call the \emph{Krivine ordered combinatory algebras} ($\koca$s)-- that
we propose as the foundational pillars for the categorical perspective
of Krivine's classical realizability as presented by Streicher in
\cite{kn:streicher}.

Our procedure is the following: we show that each of the considered
combinatory structures gives rise to an indexed preorder, and describe
a way to transform the different structures into each other that
preserves the associated indexed preorders up to equivalence. Since
all structures give rise to the same indexed preorders, we only prove
that they are triposes once: for the class of $\,\koca$\hspace*{1pt}s.

We finish showing that in $\koca$\hspace*{1pt}s, one can define
realizability in every higher-order language and in particular in
higher-order arithmetic.

\COMMENT{We consider 
different classes of combinatory
structures related to Krivine realizability. We then show that they
are equivalent for the purpose of modeling higher-order logic, in the
precise sense that they give rise to the same class of triposes.  We
emphasize the role of a special kind of Ordered Combinatory Algebras--
the \emph{Krivine ordered combinatory algebras} ($\koca$s)-- that we
propose as the foundational pillars for the categorical perspective of
Krivine's classical realizability as presented by Streicher in
\cite{kn:streicher}.

Our strategy is the following: we show how each type of combinatory
structure gives rise to an indexed preorder, and explain how to
transform the different types of structures into one another, in a way
that preserves the associated indexed preorder up to
equivalence. Since all structures give rise to the same indexed
preorders, we only have to prove that they are triposes once, and we
do this for $\koca$\hspace*{1pt}s.

Moreover, we show that in this kind of combinatory algebras:
$\koca$\hspace*{1pt}s, we can define realizability in every higher-order
language and in particular in higher-order arithmetic.}
\end{abstract}

\title[$\koca${\tiny s} and realizability] 
{Ordered combinatory algebras and realizability}
\author[W. Ferrer\and J. Frey \and M. Guillermo \and O. Malherbe \and
  A. Miquel]{Walter Ferrer Santos \and Jonas Frey \and Mauricio Guillermo
  \and 
Octavio Malherbe \and Alexandre Miquel}
\maketitle
\tableofcontents
\newpage
\section{Introduction}
Classical realizability was introduced in the mid 90's by Krivine
as a complete
reformulation of the principles of Kleene's (intuitionistic)
realizability (see \cite{Kleene}), to take into account the connection
between control 
operators and classical reasoning discovered by Griffin (in
\cite{Griffin}). 
Initially developed in the framework of classical second-order
Peano arithmetic (see \cite{KrivineStorageOp}), classical realizability was
quickly extended to 
Zermelo-Fraenkel set theory in \cite{kn:kr2001} using model-theoretic
constructions 
reminiscent both to  
the construction of generic extensions in forcing and 
to the construction of intuitionistic realizability models of 
intuitionistic set theories, see \cite{Myhill73}, \cite{Friedman73},
\cite{McCarty84}. 
In particular, Krivine showed in~\cite{kn:kr2003} how to interpret the
(classical) axiom 
of dependent 
choices in this framework.
More recently, he also showed in~\cite{kn:krreal} how to combine
classical realizability with 
the method of forcing (in the sense of Cohen), in the very spirit of
iterated forcing. 

Actually, Krivine's realizability (particularly in its most recent
developments) was mostly developed regardless to the
long-standing tradition of intuitionistic realizability.
And, as mentionned by Streicher in~\cite{kn:streicher}, it was dificult to
see how Krivine's work could fit into the structural approach to
realizability as initiated by Hylandin ~\cite{kn:hyland} and fully
described in~\cite{kn:vanOosbook}. 
One problem comes from the fact that the only realizability topos that
fulfils classical logic is the one based on the trivial partial
combinatory algebra (\pca) and thus,
equivalent to $\catset$, a fact that suggested 
for a long time that realizability and classical logic were incompatatible.

To resolve this paradox, Streicher proposed in~\cite{kn:streicher} a
categorical model for Krivine's 
realizability, still using the standard method that consists to
combine the construction of a realizability tripos with the well known
tripos-to-topos construction (see~\cite{kn:vanOosbook}). 
However, Streicher's
construction of the realizability tripos departs
from the standard construction from a \pca in several aspects. 

First, Streicher does not use a \pca, but a particular form of ordered
combinatory algebra (\oca), that is built from an abstract Krivine
structure ($\mathcal{AKS}$) that provides the computational
ingredients of Krivine 
realizability.  

Second, the elements of the considered \oca (induced by the underlying
$\mathcal{AKS}$) are not used as
realizers, but directly as truth values, using the fact that the
considered \oca\ has a meet-semilattice structure. In this
way, Streicher can skip the step that
consists to take the powerset to define truth values, and more
generally relations (as one would do if working with a \pca). 

A third ingredient of Streicher's construction is
the introduction of a specific notion of filter, to distinguish the
truth values that actually capture the notion of truth/provability.  
In practice, this filter is naturally defined from the
pole of the $\mathcal{AKS}$ and the corresponding notion of proof-like
terms.
Two warnings here
concerning notations: following the usual trends we use sometimes the
expression quasi--proof following the original
french \emph{quasi--preuve} instead of proof-like term; also the
reader should be aware that the notion of filter used in this context
is due to Hofstra (see~\cite{kn:hofstra2006}) and 
is different from the usual one. 

In this paper, we revisit Streicher's work by showing that his
construction can be performed working directly from a particular form
of \oca, 
which we call $\koca$, whose elements can be indifferently used as
realizers (or conditions) and as truth values, similarily to the
elements of a 
complete Boolean algebra in forcing. 
In particular, it should be clear to the reader that complete Boolean
algebras are particular cases of $\koca$s, and that in this case, the
general construction presented here amounts to the standard
construction of a Boolean tripos. So that the concept of $\koca$ can
be seen as the common denominator between classical realizability and
Cohen forcing. 
Moreover, to make more striking the comparison with the
standard approach of categorical models of 
realizability, we actually present the tripos construction starting from a
slightly more general structure of $\ioca$ that does not assume
anything about the logic being classical.  

\section{Streicher's Abstract Krivine Structures}

As motivation for the introduction of the concept of \emph{Krivine
  ordered combinatory algebras} ($\koca$\hspace*{1pt}s), we
recapitulate the definitions and basic ideas in \cite{kn:streicher},
regarding the notion of \emph{Abstract Krivine Structures} $\mathcal
{AKS}$.  These ideas were introduced by J.L. Krivine and reformulated
categorically by T. Streicher --see \cite{kn:kr2008} and
\cite{kn:streicher} respectively--.
\subsection{Realizability lattices} 
\begin{definition} A realizability lattice --abbreviated as $\mathcal
  {RL}$-- consists of:
\begin{enumerate}
\item A triple $(\Lambda,\Pi,\Perp)$ where $\Lambda$ and $\Pi$ are
  sets, \emph{of terms} and \emph{stacks} respectively, 
  and $\Perp \subseteq\Lambda \times \Pi$ is a subset (relation)
  ($\Lambda \times \Pi$ is the set of \emph{processes} and its
  elements are written $(t,\pi)$ or $(t,\pi)=t \star \pi$, moreover if
  $t \star \pi \in \Perp$, we write $t \perp \pi$, i.e. ``$t$ is
  orthogonal to $\pi$ or $t$ realizes $\pi$''). If $P \subset \Pi$
  and $t \perp \pi$ for all $\pi \in P$ we say that ``$t$ realizes $P$''
  and write $t \perp P$.
\item Define the following maps:
\begin{align*} (\quad)^{\perp}:\mathcal
P(\Lambda)&\xrightarrow{\hspace*{0.5cm}} \mathcal P(\Pi)\\ \Lambda
\supseteq L &\longmapsto L^{\perp}=\{\pi \in
\Pi|\,\, \forall t \in L, t \star \pi \in \Perp\}=\{\pi \in \Pi|\,\, L
\times \{\pi\} \subseteq \Perp\}\subseteq \Pi;
\end{align*}
\begin{align*} {}^{\perp}(\quad):\mathcal
  P(\Pi)&\xrightarrow{\hspace*{.5cm}} \mathcal P(\Lambda)\\ \Pi
  \supseteq P &\longmapsto {}^{\perp}P=\{t \in
  \Lambda|\,\, \forall \pi \in P, t \star \pi \in \Perp\}=\{t \in
  \Lambda|\,\, \{t\} \times P \subseteq \Perp\}\subseteq \Lambda.
\end{align*}
\item Define the following sets:
\[\mathcal P_{\perp}(\Lambda)=\{L \subseteq \Lambda:\,
{}^\perp(L^{\perp}) = L\} \subseteq \mathcal P(\Lambda)\quad,\quad
\mathcal P_{\perp}(\Pi)=\{P \subseteq \Pi:\, ({}^{\perp}P)^{\perp} =
P\} \subseteq\mathcal P(\Pi).\]
\end{enumerate}
\end{definition}
\begin{remark}\label{remark:initialrl}
\begin{enumerate}
\item The maps $L \rightarrow L^{\perp}$ and $P \rightarrow
  {}^{\perp}P$ are antitonic with respect to the order given by the
  inclusion of sets and for $P_i \subseteq\Pi$, $L_i \subseteq\Lambda,
  {i \in I}$ we have: $${}^{\perp}\big(\bigcap_{i \in I} P_i\big) \supseteq
  \bigcup_{i \in I} {}^{\perp}P_i\,,\,{}^{\perp}\big(\bigcup_{i \in I}
  P_i\big) = \bigcap_{i \in I} {}^{\perp}P_i;\big(\bigcap_{i \in I}
  L_i\big)^{\perp} \supseteq \bigcup_{i \in I}
  L_i^{\perp}\,,\,\big(\bigcup_{i \in I} L_i\big)^{\perp} = \bigcap_{i
    \in I} L_i^{\perp}.$$
\item For an arbitrary $L \in \mathcal P(\Lambda)$ and $P \in \mathcal
  P(\Pi)$, one has that ${}^{\perp}(L^{\perp}) \supseteq L$ and
  $({}^{\perp}P)^{\perp} \supseteq P$. For an arbitrary $L \in
  \mathcal P(\Lambda)$ and $P \in \mathcal P(\Pi)$, one has that
  $({}^\perp(L^{\perp}))^{\perp} = L^{\perp}$ and
  ${}^{\perp}(({}^\perp P)^\perp) = {}^{\perp}P$.
\item \label{item:completion} The maps $(\quad)^{\perp}:\mathcal
  P(\Lambda)\rightarrow \mathcal P(\Pi)$ and
  ${}^{\perp}(\quad):\mathcal P(\Pi)\rightarrow \mathcal P(\Lambda)$
  when restricted respectively to $\mathcal P_{\perp}(\Lambda)$ and
  $\mathcal P_{\perp}(\Pi)$ are order reversing isomorphisms inverse
  to each other.  An $\mathcal {RL}$ satisfies two strong
  completion properties.  If $\mathcal X$ is a subset of $\mathcal X
  \subseteq \mathcal
  P_\perp(\Pi)$, define: \[\operatorname{sup}(\mathcal
  X)=\Big({}^\perp\big(\bigcup\{P: P \in \mathcal X
  \}\big)\Big)^\perp\,\,,\,\,\operatorname{inf}(\mathcal
  X)=\bigcap\{P: P \in \mathcal X \}.\] In particular,
  $\operatorname{sup}(\mathcal X)\, \text{and}\,
  \operatorname{inf}(\mathcal X)$ are the supremum and infimum of the
  set $\mathcal X$ in $\mathcal P_\perp(\Pi)$ with respect to the
  order given by the inclusion of sets.  Moreover with respect to the
  order given by the inclusion, $\Lambda^\perp$ and $\Pi$\emph{;}
  ${}^\perp\Pi $ and $\Lambda $ are the minimal and maximal elements
  of $\mathcal P_{\perp}(\Pi)$ and $\mathcal P_{\perp}(\Lambda)$
  respectively.

\end{enumerate}
\end{remark} The only relevant structure at this point is the
\emph{lattice} structure in the sets $P_{\perp}(\Lambda)$ and
$P_{\perp}(\Pi)$, where we take the (set theoretical) inclusion as the
order the intersection as ``meet'' and the union followed by taking
double orthogonals as ``join''.
\subsection{The push map in a realizability lattice} In this section we
add a \emph{push map} to a realizability lattice, thus introducing the
first elements of a \emph{calculus} into our structure.
\begin{definition}\label{defi:maps}
\begin{enumerate}
\item A map $\operatorname{push}(t,\pi):(t,\pi) \mapsto t . \pi :
\Lambda \times \Pi \rightarrow \Pi$ defined in a realizability lattice
$(\Lambda,\Pi,\Perp)$, will be called a \emph{push map}.
\item For an $\mathcal{RL}$ with a push map, and for $L \subseteq
  \Lambda$, $P \subseteq \Pi$ we define, $$(L,P)\mapsto L \leadsto
  P:\mathcal P(\Lambda) \times \mathcal P(\Pi) \rightarrow \mathcal
  P(\Pi)$$ with $L \leadsto P=\{\pi \in \Pi: L.\pi \subseteq P\}
  \subseteq \Pi$ --called \emph{the right conductor of $L$ into
    $P$}. We consider also $(L,P)\mapsto L\cdot P:\mathcal P(\Lambda)
  \times \mathcal P(\Pi) \rightarrow \mathcal P(\Pi)$ where:
  $L \cdot P=\{t \cdot \pi:t \in L,\, \pi \in P\}$.
\end{enumerate}
\begin{remark} \label{remark:firstadj} Clearly, for $L$ and $P$ as
  above: $L \cdot P \subseteq 
  Q$ if and only if $P \subseteq L \leadsto Q$.
\end{remark}
\end{definition} The above constructions of $L \leadsto P$ and $L.P$
combined with the operators $(\quad)^\perp$ and ${}^\perp(\quad)$
yield natural binary operations in $\mathcal P_{\perp}(\Pi)$ that are
basic ingredients of the $\mathcal {OCA}$ associated to the $\mathcal
{AKS}$ \emph {\`a la Streicher}.

We define the following binary operations between subsets of $\Pi$.
\begin{definition} \label{defi:maps2}  Let $P,Q \subseteq \Pi$ then:
\begin{align}
  \label{eqn:maps1} P \circ Q:=~^\perp Q \leadsto P \subseteq P
  \circ_\perp Q:=(^\perp(^\perp Q \leadsto P))^\perp,\\
\label{eqn:maps2} P\Rightarrow Q:={}^\perp P \cdot Q \subseteq
P\Rightarrow_\perp Q:=({}^\perp({}^\perp P \cdot Q))^{\perp} \in
\mathcal P_{\perp}(\Pi).
\end{align}
\end{definition} Once the above definitions are established, we can
deduce a crucial ``half adjunction property'' relating the operations
$\circ_\perp$ and $\Rightarrow_\perp$ in $\mathcal P_\perp(\Pi)$.
\begin{theorem}\label{theorem:adjunction}\emph{[Half adjunction property]}
  Assume that $P,Q,R \in \mathcal P_{\perp}(\Pi)$. If $Q
  \Rightarrow_\perp\!\!R \subseteq P$, then $R \subseteq P
  \circ_{\perp}Q$. In particular: $P \subseteq (Q
  \Rightarrow_\perp\!\!P)\circ_{\perp} Q$.
\end{theorem}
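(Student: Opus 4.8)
The plan is to strip both completed operations down to their uncompleted cores and reduce everything to the elementary adjunction of Remark~\ref{remark:firstadj}, using only that the double-orthogonal closure $S \mapsto ({}^{\perp}S)^{\perp}$ on $\mathcal{P}(\Pi)$ is \emph{extensive}, i.e.\ $({}^{\perp}S)^{\perp}\supseteq S$ (Remark~\ref{remark:initialrl}(2)). Recall that by Definition~\ref{defi:maps2} the completed terms are exactly closures: $Q \Rightarrow_\perp R = ({}^{\perp}({}^{\perp}Q \cdot R))^{\perp}$ is the closure of $Q \Rightarrow R = {}^{\perp}Q \cdot R$, and $P \circ_\perp Q = ({}^{\perp}({}^{\perp}Q \leadsto P))^{\perp}$ is the closure of $P \circ Q = {}^{\perp}Q \leadsto P$. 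The whole argument is then a three-move chain between these cores.

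First I would peel the closure off the hypothesis: by extensivity, ${}^{\perp}Q \cdot R \subseteq ({}^{\perp}({}^{\perp}Q \cdot R))^{\perp} = Q \Rightarrow_\perp R$, so composing with the assumption $Q \Rightarrow_\perp R \subseteq P$ gives the uncompleted inclusion ${}^{\perp}Q \cdot R \subseteq P$. Next I would apply the elementary adjunction of Remark~\ref{remark:firstadj} with $L := {}^{\perp}Q$, its two set-variables instantiated to $R$ and $P$ respectively; this converts ${}^{\perp}Q \cdot R \subseteq P$ into $R \subseteq {}^{\perp}Q \leadsto P$, that is, $R \subseteq P \circ Q$. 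Finally I would re-apply extensivity in the other operation: $P \circ Q \subseteq ({}^{\perp}(P \circ Q))^{\perp} = P \circ_\perp Q$, whence $R \subseteq P \circ_\perp Q$, the desired conclusion. The hypotheses $P,Q,R \in \mathcal{P}_{\perp}(\Pi)$ are not actually needed for this chain, but they fix the ambient lattice in which the statement is read.

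For the \emph{in particular} clause I would simply instantiate the main implication at $R := P$, with the role of the left-hand $P$ played by $Q \Rightarrow_\perp P$: the premise then reads $Q \Rightarrow_\perp P \subseteq Q \Rightarrow_\perp P$, which holds trivially, and the conclusion is exactly $P \subseteq (Q \Rightarrow_\perp P)\circ_\perp Q$. The step needing the most care is understanding why this is only a \emph{half} adjunction. Extensivity lets me descend from $Q \Rightarrow_\perp R \subseteq P$ to ${}^{\perp}Q \cdot R \subseteq P$, which is precisely the direction the elementary adjunction consumes; the converse would require ascending from $R \subseteq P \circ_\perp Q = ({}^{\perp}(P \circ Q))^{\perp}$ back to $R \subseteq P \circ Q$, and the closure can strictly enlarge $P \circ Q$, so that passage is blocked and no appeal to closedness of $P,Q,R$ repairs it. The only other place an error could slip in is the orientation of the two variables fed into Remark~\ref{remark:firstadj}, so I would double-check that ${}^{\perp}Q \cdot R \subseteq P$ matches $L\cdot P' \subseteq Q'$ with $L={}^{\perp}Q$, $P'=R$, $Q'=P$.
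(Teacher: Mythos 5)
Your proof is correct and follows essentially the same route as the paper's: strip the closure from the hypothesis (extensivity of $S\mapsto({}^{\perp}S)^{\perp}$), apply the elementary adjunction of Remark~\ref{remark:firstadj} with $L={}^{\perp}Q$, then re-close to land in $P\circ_{\perp}Q$. The only difference is cosmetic: the paper phrases the first steps as equivalences (the backward direction of which uses $P\in\mathcal P_{\perp}(\Pi)$), whereas you use only the one-directional implications, which is all that is needed and which, as you correctly observe, makes the closedness hypotheses dispensable.
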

\begin{proof} The following inclusions are equivalent: $Q
  \Rightarrow_\perp\!\!R \subseteq P$, $({}^\perp({}^{\perp}Q \cdot
  R))^{\perp} \subseteq P$, ${}^{\perp}Q \cdot R \subseteq P$ and $R
  \subseteq {}^\perp Q \leadsto P$. The last inclusion implies that $R
  \subseteq ({}^\perp({}^\perp Q\leadsto P))^{\perp} = P \circ_{\perp} Q$.
\end{proof} 

\subsection{Abstract Krivine structures}  Next, --compare with
\cite{kn:streicher}-- we complete the process of adding a calculus to
a realizability lattice to obtain the concept of \emph{Abstract
Krivine Structure} abbreviated as $\mathcal {AKS}$. For that, we
introduce the usual application map for terms, a store map from stacks
to terms, the combinators $\cK,\cS$, and a distinguished term $\cCC$
that is a realizer of Peirce's law. We introduce also a set of terms
that we call quasi--proofs and assume that the three combinators above
are quasi proofs.
\begin{definition} \label{defi:aks}An \emph{Abstract Krivine
    Structure} (frequently written as $\mathcal K$) consists of the
  following elements:
\begin{enumerate}
\item \label{item:aksdecuple} A realizability lattice with a push
$(\Lambda,\Pi,\Perp,\operatorname{push})$,
\item Functions 
\begin{enumerate}
\item $\operatorname{app}:\Lambda \times \Lambda \rightarrow \Lambda$
is a function: $(t,u) \mapsto \operatorname{app}(t,u)=tu$,
\item $\operatorname{store}:\Pi \rightarrow \Lambda$ is a function:
$\pi \mapsto \operatorname{store}(\pi)=k_\pi$,
\end{enumerate}
\item A set $\operatorname {QP} \subseteq \Lambda$ of
  ``quasi--proofs'', which is closed under application,
\item Elementary combinators $\cK,\cS,\cCC \in \operatorname {QP}$.

\item \label{item:aksaxioms} The above elements are subject to the
following axioms.  \newcounter{qcounter}
\begin{list}{(S\arabic{qcounter})}{\usecounter{qcounter}}
\item \label{item:first} If $t \perp s \cdot \pi$, then $ts \perp
\pi$.
\item If $t \perp \pi$, then for all $s \in \Lambda$ we have that $\cK
\perp t \cdot s \cdot \pi$.
\item If $tu (su) \perp \pi$, then $\cS \perp t \cdot s \cdot u \cdot
\pi$.
\item If $t \perp k_\pi \cdot \pi$, then $\cCC \perp t \cdot \pi$.
\item If $t \perp \pi$, then for all $\pi'\in \Pi$ we have that $k_\pi
\perp t \cdot \pi'$.
\end{list}
\end{enumerate}
\end{definition} Here --and in the rest of this paper-- the product--like
operations will not be associative and we assume that when parenthesis
are omitted, we associate to the left.

The elements of the structure above, named
as: \[\operatorname{store}:\pi \mapsto k_\pi :\Pi \rightarrow \Lambda
\quad \text{and} \quad \cCC \in \Lambda,\] have a very special role in
the sense they can be used to make the realizability theory
\emph{classical} as $\cCC$ realizes Peirce's law. In this sense in the
presence of the mentioned elements and the corresponding axioms (S4)
and (S5), the $\mathcal {AKS}$-- is \emph{classical}.

\begin{definition} \label{defi:maps1}For a general $\mathcal {AKS}$
  we introduce the following definitions: 
\begin{enumerate}
\item For $L,M \subseteq \Lambda$ we define $L \Rightarrow M=\{t \in
  \Lambda: tL \subseteq M\}$, 
\item For $P,Q \subseteq \Pi$, $P \diamond Q:=\big(({}^\perp P)
  ({}^\perp Q)\big)^{\perp} \in \mathcal P_{\perp}(\Pi)$.
\item $\cI= \cS\cK\cK,\,\cB=\cS(\cK\cS)\cK,\, \cE=\cS(\cK\cI) \in \operatorname{QP}$.
\end{enumerate}
 \end{definition}
\begin{lemma}\label{lem:circdiamond} In an $\mathcal{AKS}$, for $P,Q
  \in \mathcal P_\perp(\Pi)$, we have that condition \emph{(S1)} in
  Definition \ref{defi:aks}, \eqref{item:aksaxioms} implies any of the
  two equivalent conditions below.
\begin{enumerate}
\item $P \circ_{\perp} Q \subseteq ({}^\perp P {}^\perp Q)^\perp =P \diamond
Q$
\item If $t \perp P$ and $s \perp Q$, then $ts \perp P \circ_{\perp} Q$.
\end{enumerate}
\end{lemma}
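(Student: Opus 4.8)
The plan is to unfold every definition and observe that, once translated into elementary orthogonality statements, conditions (1) and (2) are literally the same assertion; the only genuine content is then to derive that common assertion from axiom (S1). First I would record the two reformulations. By definition $P \diamond Q = (({}^\perp P)({}^\perp Q))^\perp$, so a stack $\pi$ lies in $P \diamond Q$ exactly when $tu \perp \pi$ for every $t \in {}^\perp P$ and every $u \in {}^\perp Q$. Hence condition (1), the inclusion $P \circ_\perp Q \subseteq P \diamond Q$, reads: for all $\pi \in P \circ_\perp Q$, all $t \in {}^\perp P$ and all $u \in {}^\perp Q$ one has $tu \perp \pi$. On the other hand, since $t \perp P$ means $t \in {}^\perp P$ and $ts \perp P \circ_\perp Q$ means $ts \in {}^\perp(P \circ_\perp Q)$, condition (2) reads: for all $t \in {}^\perp P$, all $s \in {}^\perp Q$ and all $\pi \in P \circ_\perp Q$ one has $ts \perp \pi$. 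These are the same statement up to renaming $s$ as $u$ and reordering the quantifiers, which settles the equivalence of (1) and (2) with no use of (S1) at all.

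It then remains to prove this common statement from (S1). Rather than argue over the closed set $P \circ_\perp Q$ directly, I would first establish the unclosed inclusion $P \circ Q \subseteq P \diamond Q$ and afterwards pass to closures. So let $\pi \in P \circ Q = {}^\perp Q \leadsto P$, which by the definition of the right conductor means $({}^\perp Q)\cdot \pi \subseteq P$. Fix $t \in {}^\perp P$ and $u \in {}^\perp Q$. From $u \in {}^\perp Q$ and $({}^\perp Q)\cdot\pi \subseteq P$ we get $u\cdot\pi \in P$; from $t \in {}^\perp P$ we then get $t \perp u\cdot\pi$; and finally axiom (S1) (namely $t \perp s\cdot\pi \Rightarrow ts \perp \pi$, applied with $s = u$) yields $tu \perp \pi$. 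As $t,u$ were arbitrary, $\pi \in (({}^\perp P)({}^\perp Q))^\perp = P \diamond Q$, proving $P \circ Q \subseteq P \diamond Q$.

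To finish I would upgrade this to the closed version. The operator $X \mapsto ({}^\perp X)^\perp$ is monotone, because both $(-)^\perp$ and ${}^\perp(-)$ are antitone by Remark \ref{remark:initialrl}(1), and by the definitions in \ref{defi:maps2} its value at $P \circ Q = {}^\perp Q \leadsto P$ is exactly $P \circ_\perp Q$. Moreover $P \diamond Q$ is already $\perp$-closed, i.e. $P \diamond Q \in \mathcal P_\perp(\Pi)$, since it has the form $Y^\perp$ with $Y = ({}^\perp P)({}^\perp Q) \subseteq \Lambda$ and $({}^\perp(Y^\perp))^\perp = Y^\perp$ by Remark \ref{remark:initialrl}(2). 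Applying the monotone closure to $P \circ Q \subseteq P \diamond Q$ therefore gives $P \circ_\perp Q \subseteq ({}^\perp(P\diamond Q))^\perp = P \diamond Q$, which is condition (1), and hence also (2). The computation is essentially routine; the only point that needs care is this last transition, since $P \circ_\perp Q$ is a priori strictly larger than $P \circ Q$, so one must not attempt to run the (S1) argument on a general element of the closure, but instead exploit that the target $P \diamond Q$ is itself closed so that monotonicity of the closure does the remaining work.
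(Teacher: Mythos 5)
Your proof is correct and follows essentially the same route as the paper: unfold the definitions to see that (1) and (2) are the same statement with reordered quantifiers, then use (S1) on an element $\pi$ with $({}^\perp Q)\cdot\pi \subseteq P$ to get the inclusion $P \circ Q \subseteq P \diamond Q$. You are in fact slightly more careful than the paper, which proves only this unclosed inclusion and leaves implicit the final passage to $P \circ_{\perp} Q$; your explicit use of the monotonicity of the double-orthogonal closure together with the closedness of $P \diamond Q$ fills in that step rather than diverging from the argument.
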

\begin{proof}
  It is evident that the two conditions above are equivalent.
  Assuming (S1), we want to prove that for all $P,Q$ then: $\{\pi \in
  \Pi: {}^\perp Q.\pi \subseteq P\} \subseteq ({}^\perp P {}^\perp
  Q)^\perp$.

In other words we want to show that if $\pi \in \Pi$ is such that
${}^\perp Q.\pi \subseteq P$ then, for all $s \perp P,\, t \perp Q$ we
have that $st \perp \pi$. It is clear that from the hypothesis
${}^\perp Q.\pi \subseteq P$ and $s \perp P,\, t \perp Q$, that $s
\perp t.\pi$ and in this case the original condition (S1) implies that
$st \perp \pi$.
\end{proof}
Next, we deduce some consequences or equivalent formulations of the
basic axioms for an $\mathcal {AKS}$ in terms of elements of $\mathcal
P_\perp(\Pi)$ and the operations $\circ_{\perp}$,
$\rightarrow_{\perp}$, $\diamond$.
\begin{lemma}\label{lem:aks} For 
$P,Q,R\in \mathcal P_{\perp}(\Pi)$, $t,u, v\in\Lambda$, and $\pi\in\Pi$, we have:
\begin{enumerate}
\item \label{lem:aks-0}$t \perp (P\imp_{\perp} Q) \imp_{\perp} R$ if
  and only if $t \perp (P\imp Q) \imp R$. Also if $t \perp
  P\imp_{\perp} (Q \imp_{\perp} R)$ then $t \perp P\imp (Q \imp R)$.
\item\label{lem:aks-mp} $t\perp P\imp_{\perp} Q,\quad u\perp
P\quad\text{implies}\quad tu\perp Q$;
\item\label{lem:aks-k} $\cK\perp P\imp Q\imp P$;
\item\label{lem:aks-s} $\cS\perp (P\imp Q\imp R)\imp (P\imp Q)\imp
  P\imp R$;
\item\label{lem:aks-cc} $\cCC\perp ((P\imp_{\perp} Q)\imp_{\perp}
  P)\imp_{\perp} P$ or equivalently $\cCC\perp ((P\imp Q)\imp P)\imp
  P$;
\item\label{lem:aks-i-orth} $t\orth\pi$ implies $\cI\orth
t\push\pi$;
\item\label{lem:aks-i-real} $\cI\perp P\imp_{\perp} P$;
\item\label{lem:ask-borth} $t\orth uv\push\pi$ implies $\cB\orth
t\push u\push v\push\pi$;
\item\label{lem:aks-b-real} $\cB\perp(Q\imp R)\imp (P\imp Q)\imp
P\imp R$;
\item\label{lem:aks-e} $tu\orth\pi$ implies $\cE t\orth u\push\pi$.
\end{enumerate}
\end{lemma}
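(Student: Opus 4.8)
The statement splits into two families that I would treat by different means, after two preliminary observations. The first is the \emph{closure principle}: for any $X\subseteq\Pi$ one has ${}^\perp X={}^\perp(({}^\perp X)^\perp)$ by Remark~\ref{remark:initialrl}, so $P\imp_\perp Q$ — being the biorthogonal closure of $P\imp Q={}^\perp P\push Q$ — has exactly the same realizers as $P\imp Q$; hence $t\perp(P\imp_\perp Q)$ iff $t\perp(P\imp Q)$, and moreover ${}^\perp(P\imp_\perp Q)={}^\perp(P\imp Q)$. The second is the unfolding $P\imp Q={}^\perp P\push Q$, which lets me read off the generic element of a nested implication as an explicit stack of realizers: a generic stack in $(P\imp(Q\imp R))\imp\cdots$ has the shape $t\push s\push u\push\pi$ with $t\perp P$, $s\perp Q$, and so on. With these in hand, (1) is pure bookkeeping: when the closure sits on the \emph{left} argument I use ${}^\perp(P\imp_\perp Q)={}^\perp(P\imp Q)$ to see that $(P\imp_\perp Q)\imp R$ and $(P\imp Q)\imp R$ are literally the same set, giving an equivalence; when it sits on the \emph{right}, only the inclusion $Q\imp R\subseteq Q\imp_\perp R$ is available, and antitonicity of ${}^\perp(\cdot)$ yields one implication only. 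Clause (2), modus ponens, is immediate: $t\perp P\imp_\perp Q$ unfolds to $t\perp u\push\pi$ for every $u\perp P$ and $\pi\in Q$, and (S1) turns this into $tu\perp\pi$.

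The realizer lemmas (3)–(5) for the basic combinators follow by unfolding the target into an explicit stack and applying the matching axiom. For $\cK$ I unfold $\cK\perp P\imp(Q\imp P)$ to $\cK\perp t\push s\push\pi$ with $t\perp P$, $s\perp Q$, $\pi\in P$; since $\pi\in P$ forces $t\perp\pi$, axiom (S2) applies verbatim. For $\cS$ I unfold to $\cS\perp t\push s\push u\push\pi$ with $t\perp P\imp(Q\imp R)$, $s\perp P\imp Q$, $u\perp P$, $\pi\in R$; three modus-ponens steps give $su\perp Q$ and $tu\perp Q\imp R$, then $tu(su)\perp R$, hence $tu(su)\perp\pi$, which is exactly the hypothesis of (S3). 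For $\cCC$ I pass to the equivalent form $\cCC\perp((P\imp Q)\imp P)\imp P$ (the two forms agree by (1)), unfold to $\cCC\perp t\push\pi$ with $t\perp(P\imp Q)\imp P$ and $\pi\in P$, and reduce by (S4) to $t\perp k_\pi\push\pi$; for this it suffices that $k_\pi\perp P\imp Q$, which is precisely what (S5) delivers once one notes $s\perp\pi$ for every $s\perp P$.

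The remaining clauses are the orthogonality (``anti-reduction'') statements for the derived combinators $\cI=\cS\cK\cK$, $\cB=\cS(\cK\cS)\cK$ and $\cE=\cS(\cK\cI)$, which I regard as the technical heart. The tool I will extract from the axioms is a pair of \emph{firing rules}, valid for an arbitrary stack $\sigma$: from (S2) and (S1), if $a\perp\sigma$ then $\cK\,a\,b\perp\sigma$; and from (S3) and (S1), if $ac(bc)\perp\sigma$ then $\cS\,a\,b\perp c\push\sigma$. Each orthogonality statement is then obtained by staging these rules along the head reduction of the combinator. To get $\cI\perp t\push\pi$ from $t\perp\pi$ I fire $\cK$ to obtain $\cK t(\cK t)\perp\pi$, then fire $\cS$ (with $a=b=\cK$, $c=t$) to obtain $\cS\cK\cK\perp t\push\pi$. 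The $\cB$ case follows the same pattern with two nested rounds: from $t\perp uv\push\pi$ one first produces $(\cK t)v(uv)\perp\pi$, promotes it by (S3) to $\cS\perp(\cK t)\push u\push v\push\pi$, then feeds this through a $\cK$-firing for the $\cK\cS$ redex and a final $\cS$-firing; $\cE$ is analogous, routed through $\cI(tu)\perp\pi$ obtained from the already-proved $\cI$ statement. Finally the realizer versions (7) and (9) are immediate corollaries: unfolding $\cI\perp P\imp_\perp P$ and $\cB\perp(Q\imp R)\imp\cdots$ into explicit stacks reduces them to (6) and (8), the side conditions being discharged directly (for $\cI$, as $\pi$ lies in $P$) or by the modus-ponens step $uv\perp Q$ (for $\cB$, as in the $\cS$ case).

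The one genuine subtlety — and where I expect the bookkeeping to concentrate — is that (S1) is available in one direction only: it lets me \emph{absorb} a stack entry into the head term ($a\perp b\push\pi\Rightarrow ab\perp\pi$) but never the reverse. Consequently I can never ``unapply'' a combinator in order to lengthen its stack; every anti-reduction must instead be organized so that the relevant axiom (S2), (S3) or (S4) first exposes the combinator against a fully explicit stack, after which (S1) is used purely to fold the leading arguments back in. Getting this staging right is routine for $\cI$, but for $\cB$ it requires interleaving two $\cK$-firings and two $\cS$-firings in the correct order, and this is the step I would check most carefully.
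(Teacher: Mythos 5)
Your proposal is correct, and its core mechanism --- unfolding nested implications $P\imp Q\imp\cdots$ into explicit stacks $t\push s\push\cdots\push\pi$ of realizers via $P\imp Q={}^\perp P\push Q$, using the triple-orthogonal identity to erase $(\,)_\perp$-closures, and then invoking the matching axiom among (S1)--(S5) --- is exactly the paper's. Three points of comparison are worth recording. First, for modus ponens (clause 2) you argue directly: ${}^\perp(P\imp_{\perp} Q)={}^\perp(P\imp Q)$, unfold, apply (S1); the paper instead derives it from Lemma~\ref{lem:circdiamond} together with the half-adjunction property (Theorem~\ref{theorem:adjunction}), so your route is more elementary and stays entirely at the level of stacks. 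Second, the paper's proof silently omits clauses (8) and (9) (the $\cB$ statements); your two-round argument for (8) --- from $t\perp uv\push\pi$ produce $(\cK t)v(uv)\perp\pi$, promote by (S3) to $\cS\orth(\cK t)\push u\push v\push\pi$, then a $\cK$-firing and a final $\cS$-firing to reach $\cS(\cK\cS)\cK\orth t\push u\push v\push\pi$ --- is correct and fills that gap, as is your reduction of (9) to (8) via the modus-ponens step $uv\perp Q$. Third, your reusable ``firing rules'' ($a\perp\sigma\Rightarrow\cK ab\perp\sigma$, and $ac(bc)\perp\sigma\Rightarrow\cS ab\perp c\push\sigma$) are a clean systematization of the paper's ad hoc implication chains for $\cI$ and $\cE$. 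One slip to fix in clause (10): you say the $\cE$ argument is ``routed through $\cI(tu)\perp\pi$'', but the intermediate you actually need --- and what clause (6) delivers from $tu\perp\pi$ --- is the stack-form statement $\cI\orth(tu)\push\pi$, from which the $\cK$-firing gives $\cK\cI u\orth(tu)\push\pi$ and (S1) then gives $(\cK\cI)u(tu)\perp\pi$, ready for the final $\cS$-firing. The applied form $\cI(tu)\perp\pi$ is strictly weaker: by your own closing observation that (S1) cannot be reversed, it cannot be upgraded back to the stack form, so as literally written that step would stall. With that intermediate corrected, your derivation coincides with the paper's proof of \eqref{lem:aks-e}.
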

\begin{proof} 
  \ad{\ref{lem:aks-0}} This follows immediately from the fact that
  taking orthogonals three times is the same than taking
  them once. 

\medskip

  \ad{\ref{lem:aks-mp}} Take $t\perp (P \Rightarrow_{\perp}Q)$ and
  $u\in {}^{\perp} P$ and we want to prove that $tu \perp Q$ or
  equivalently that ${}^\perp(P\Rightarrow_{\perp}Q) ({}^\perp P)
  \subseteq {}^\perp Q$ or that $(P \Rightarrow_{\perp} Q) \diamond P
  \supseteq Q$. We have (Lemma \ref{lem:circdiamond}) that $(P
  \Rightarrow_{\perp} Q) \diamond P \supseteq (P \Rightarrow_{\perp}
  Q) \circ_\perp P \supseteq Q$. See Theorem \ref{theorem:adjunction}
  for the last inequality.

\medskip

\ad{\ref{lem:aks-k}} Let $t\in {}^\perp P$, $u\in {}^\perp Q$, $\pi\in
P$. We have to show that $\cK\orth t\push u\push \pi$. By (S2), it is
sufficient to show $t\orth \pi$, which follows from the definition of
${}^\perp P$.

\medskip

\ad{\ref{lem:aks-s}} Let $t\in{}^\perp(P\imp Q \imp R)$,
$u\in{}^\perp(P\imp Q)$, $v\in {}^\perp P$, $\pi\in R$.  Using
\ref{lem:aks-mp} we can deduce $tv(uv)\in {}^\perp R$ and thus
$tv(uv)\orth\pi$.  Axiom (S3) implies $\cS\orth t\push u\push
v\push\pi$, as required.

\medskip

\ad{\ref{lem:aks-cc}} Let $t\in{}^\perp((P\imp_{\perp} Q)\imp_{\perp}
P)$ and $\pi\in P$.  We have to show that $\cCC\orth t\push\pi$, and by
(S4) it is sufficient to show that $t\orth k_\pi\push\pi$. This
would follow from $k_\pi\in {}^\perp(P\imp_{\perp} Q)$, so it remains
to prove the latter.

Let $u\in {}^\perp P$, $\pi'\in Q$. We have to show that $k_\pi\orth
u\push\pi'$, and by (S5) it is sufficient to show that
$u\orth\pi$, which is true since $u\in {}^\perp P$ by assumption.

\medskip

\ad{\ref{lem:aks-i-orth}} $t\orth\pi$ implies $\cI\orth
t\push\pi$. Indeed, $t \perp \pi$ $\Rightarrow$ $\cK \perp t\cdot(\cK
t)\cdot \pi$ $\Rightarrow$ $\cK t \perp \cK t\cdot \pi$ $\Rightarrow$
$\cK t(\cK t) \perp \pi$ $\Rightarrow$ $\cS \perp \cK \cdot\cK \cdot
t\cdot \pi$$\Rightarrow$ $\cI = \cS \cK \cK \perp t \cdot \pi$.

\medskip

\ad{\ref{lem:aks-i-real}} It is clear that the assertion $\cI\perp
P\imp_{\perp} P$ is another formulation of \ref{lem:aks-i-orth}.

\ad{\ref{lem:aks-e}} The following chain of implications proves that
$tu\orth\pi$ implies $\cE t :=\cS(\cK(\cS\cK\cK))t \orth u\push\pi$.
\begin{eqnarray} \nonumber tu\perp \pi \Rightarrow  \cI=\cS\cK\cK \perp
tu \cdot \pi \Rightarrow \cK\perp\cS\cK\cK \cdot u \cdot tu \cdot \pi\Rightarrow
\cK(\cS\cK\cK)u(tu)\perp \pi \\ \nonumber \cK(\cS\cK\cK)u(tu)\perp \pi
\Rightarrow \cS\perp (\cK(\cS\cK\cK))\cdot t\cdot u \cdot \pi \Rightarrow
\cS(\cK(\cS\cK\cK))t \perp u \cdot \pi.
\end{eqnarray}
In the first implication we used the definition of $\cI$ in the second
the definition of $\cK$, in the third we used (S1) in the fourth the
definition of $\cS$ and in the last one, we used property (S1) again.  
\end{proof}
 \begin{remark}
Clauses \ref{lem:aks-mp}-\ref{lem:aks-cc} resemble the Hilbert style 
axiomatization of the implicational fragment of classical
propositional logic\COMMENT{(see Section~\ref{app:hilbert})}.  

Using this analogy, it is easy to show the following.

Assume that $\varphi[X_1,\dots,X_n]$ is a propositional formula built
up from propositional variables $X_1,\dots,X_n$ and implication. For
arbitrary subsets $P_1,\dots,P_n\subs\Pi$ denote by
$\varphi[P_1,\dots,P_n]\subs\Pi$ the evaluation of
$\varphi[X_1,\dots,X_n]$ where $P_1,\dots,P_n$ are substituted for the
variables, and implication is interpreted by the operation $\imp$ from
Definition~\ref{defi:maps2}.  If $\varphi[X_1,\dots,X_n]$ is provable
in the Hilbert calculus, \COMMENT{system described in
  Appendix~\ref{app:hilbert},} then 
$\varphi[P_1,\dots,P_n]$ contains a quasi-proof, namely the element of
$\operatorname{QP}$ obtained by evaluating the proof term of
$\varphi[X_1,\dots,X_n]$ in $\Lambda$.
\end{remark}

Next we list some properties satisfied by any combinator that
satisfies the $\cS \eta$ rule (i.e. the rule \ref{lem:aks-e}) in
particular by $\cE$.
\begin{lemma}\label{lem:setheoSeta}
\begin{enumerate}
\item If a combinator $\widehat{\cE}$ satisfies the $\cS \eta$ rule
--i.e. $ts \perp \pi \Rightarrow \widehat{\cE}t \perp s\cdot \pi$--
then it satisfies any of the equivalent assertions that follow.
\begin{equation}\label{eqn:setados} \text{If}\,\, P,Q \in \mathcal
  P_\perp(\Pi)\,\, \text{then}\,\, P \diamond Q \subseteq \{\pi \in \Pi:
  \widehat{\cE}({}^\perp P) \subseteq {}^\perp({}^\perp Q\cdot  \pi)\}=\{\pi
  \in \Pi: (\widehat{\cE}({}^\perp P))^\perp \supseteq ({}^\perp
  Q\cdot  \pi)\}.
\end{equation}
\begin{equation}\label{eqn:setatres} \text{If}\,\, P,Q \in \mathcal
  P_\perp(\Pi)\,\, \text{then}\,\, \widehat{\cE}({}^\perp P) \subseteq
  {}^\perp\big({}^\perp Q\cdot (P \diamond Q) \big).
\end{equation}
\begin{equation}\label{eqn:seta3.1} \text{If}\,\, R \subseteq (P
  \diamond Q),\, \text{with}\,\, P, Q, R \in P_\perp(\Pi)\,\,
  \text{then}\,\, \widehat{\cE}{}(^\perp P) \subseteq {}^\perp({}^\perp
  Q\cdot  R).
\end{equation}
\item If a combinator $\widehat{\cE}$ satisfies the $\cS \eta$ rule
then --in the notations of Definition \ref{defi:maps}--, the following
assertions hold.
\begin{equation}\label{eqn:setacuatro} \text{If}\,\, P,Q \in \mathcal
  P_\perp(\Pi)\,\, \text{then}\,\, \widehat{\cE}({}^\perp({}^\perp P
  \cdot  Q)) \subseteq {}^\perp({}^\perp P \cdot  Q)\,\, \text{or
    equivalently}\,\, \widehat{\cE}({}^\perp(P \Rightarrow_{\perp}{} Q)) \subseteq
  {}^\perp(P \Rightarrow_{\perp} Q).
\end{equation}
\begin{equation}\label{eqn:setauno}\big(t({}^\perp P)\big)^\perp
  \subseteq \{\pi \in \Pi: (\widehat{\cE}t)^\perp \supseteq ( {}^\perp
  P\cdot  \pi)\} \subseteq ({}^\perp\{\pi \in \Pi: (\widehat{\cE}t)^\perp
  \supseteq ( {}^\perp P\cdot  \pi)\})^{\perp}=(\widehat{\cE}t)^\perp \circ_{\perp}{}P.
\end{equation}
\begin{equation}\label{eqn:seta3.5}\text{If}\,\, P,Q \in \mathcal
  P_\perp(\Pi)\,\, \text{then}\,\,(P \diamond Q) \subseteq
  (\widehat{\cE}({}^\perp P))^\perp \circ_{\perp}{} Q.
\end{equation}
\item \label{item:adjunctorpaks}If $P \in \mathcal P_\perp(\Pi)$ and
  $\widehat{\cE}$ are as above, then: $(\widehat{\cE}(^\perp P))^\perp
  \subseteq (\widehat{\cE}\widehat{\cE})^\perp \circ_{\perp}{} P$.
\end{enumerate}
\end{lemma}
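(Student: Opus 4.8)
The plan is to recognize part~3 as the special case $t=\widehat{\cE}$ of the chain of inclusions~\eqref{eqn:setauno} already obtained in part~2. Indeed, \eqref{eqn:setauno} states, for an \emph{arbitrary} term $t$, that $\big(t({}^\perp P)\big)^\perp\subseteq(\widehat{\cE}t)^\perp\circ_{\perp}P$; the term $t$ occurs freely throughout and enters only through the applications $t({}^\perp P)$ and $\widehat{\cE}t$, both of which make sense for any term. Substituting $\widehat{\cE}$ for $t$ then yields exactly $\big(\widehat{\cE}({}^\perp P)\big)^\perp\subseteq(\widehat{\cE}\widehat{\cE})^\perp\circ_{\perp}P$, which is the assertion to be proved.

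Should a self-contained argument be preferred, I would unfold the definitions directly. I would fix $\pi\in\big(\widehat{\cE}({}^\perp P)\big)^\perp$, so that $\widehat{\cE}u\perp\pi$ for every $u\in{}^\perp P$. Applying the $\cS\eta$ rule with the term $\widehat{\cE}$ in the role of $t$ and $u$ in the role of $s$, the orthogonality $\widehat{\cE}u\perp\pi$ upgrades to $\widehat{\cE}\widehat{\cE}\perp u\push\pi$. Since $u\in{}^\perp P$ was arbitrary, this gives ${}^\perp P\push\pi\subseteq(\widehat{\cE}\widehat{\cE})^\perp$, that is, $\pi\in{}^\perp P\leadsto(\widehat{\cE}\widehat{\cE})^\perp$. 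Finally, every subset of $\Pi$ is contained in its double orthogonal $\big({}^\perp(\,\cdot\,)\big)^\perp$ by Remark~\ref{remark:initialrl}, so $\pi$ lies in $\big({}^\perp({}^\perp P\leadsto(\widehat{\cE}\widehat{\cE})^\perp)\big)^\perp$, which is precisely $(\widehat{\cE}\widehat{\cE})^\perp\circ_{\perp}P$ by the definition of $\circ_{\perp}$.

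I expect no genuine obstacle in this statement: its entire computational content is the single use of the $\cS\eta$ rule converting $\widehat{\cE}u\perp\pi$ into $\widehat{\cE}\widehat{\cE}\perp u\push\pi$, and everything else is manipulation of orthogonals. The only step demanding a moment's care is the final bookkeeping, matching the set $\{\pi:(\widehat{\cE}\widehat{\cE})^\perp\supseteq{}^\perp P\push\pi\}$ first with the right conductor ${}^\perp P\leadsto(\widehat{\cE}\widehat{\cE})^\perp$ and then with the defining expression of $(\widehat{\cE}\widehat{\cE})^\perp\circ_{\perp}P$.
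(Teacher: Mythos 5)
Your handling of item (3) is correct and coincides exactly with the paper's own proof, which disposes of item (3) in one line: substitute $t:=\widehat{\cE}$ in \eqref{eqn:setauno}. Your self-contained variant is also valid --- fixing $\pi$ orthogonal to $\widehat{\cE}({}^\perp P)$, a single application of the $\cS\eta$ rule (with $t:=\widehat{\cE}$, $s:=u$) gives ${}^\perp P\push\pi\subseteq(\widehat{\cE}\widehat{\cE})^\perp$, i.e.\ $\pi\in{}^\perp P\leadsto(\widehat{\cE}\widehat{\cE})^\perp$, and the double-orthogonal closure of Remark \ref{remark:initialrl} then places $\pi$ in $\big({}^\perp({}^\perp P\leadsto(\widehat{\cE}\widehat{\cE})^\perp)\big)^\perp=(\widehat{\cE}\widehat{\cE})^\perp\circ_{\perp}P$ --- and it has the small merit of making item (3) independent of item (2).

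The genuine gap is one of scope: the statement is the entire lemma, and items (1) and (2) are nowhere proved. Your primary argument cites \eqref{eqn:setauno} as ``already obtained in part 2,'' and your closing paragraph claims that the whole statement's computational content is the single conversion $\widehat{\cE}u\perp\pi\Rightarrow\widehat{\cE}\widehat{\cE}\perp u\push\pi$; both presuppose, rather than prove, items (1) and (2). What is missing is: for (1), the mutual equivalence of \eqref{eqn:setados}, \eqref{eqn:setatres}, \eqref{eqn:seta3.1} (pure unfolding of definitions) together with the derivation of \eqref{eqn:setados} from the rule (take $\pi\in P\diamond Q$, so $ts\perp\pi$ for all $t\perp P$, $s\perp Q$, and the rule gives $\widehat{\cE}t\perp s\push\pi$); and for (2), the three inclusions \eqref{eqn:setacuatro}, \eqref{eqn:setauno}, \eqref{eqn:seta3.5}, of which \eqref{eqn:setauno} is literally your second-paragraph argument run with an arbitrary term $t$ in place of $\widehat{\cE}$, \eqref{eqn:seta3.5} follows from \eqref{eqn:setados} by the same double-orthogonal step, and \eqref{eqn:setacuatro} needs the rule combined with axiom (S1). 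None of this is difficult --- it is the same bookkeeping you already carried out --- but as written your proposal establishes only one of the lemma's three items, and its main version of that item rests on an item you did not prove.
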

\begin{proof}
\begin{enumerate}
\item It is clear that the inclusions
\eqref{eqn:setados},\eqref{eqn:setatres},\eqref{eqn:seta3.1} are all
equivalent.

Now we prove that if $\forall s,t,\pi$, $ts \perp \pi \Rightarrow
\widehat{\cE}t \perp s\cdot \pi$ then the inclusion
\eqref{eqn:setados} holds.

Take $\pi \in P \diamond Q$. This means that for all $t \perp P, s
\perp Q$, $ts \perp \pi$ and then $\widehat{\cE} t \perp s \cdot
\pi$. This means that the inclusion \eqref{eqn:setados} holds.

\item The chain of inclusions below, show that if the combinator
$\widehat{\cE}$ satisfies the $\cS \eta$ rule, then
\eqref{eqn:seta3.5} holds.

The other results are proved similarly.
\[P \diamond Q \subseteq \{\pi \in \Pi: (\widehat{\cE}({}^\perp
P))^\perp \supseteq ({}^\perp Q\cdot  \pi)\} \subseteq \big({}^\perp\{\pi
\in \Pi: (\widehat{\cE}({}^\perp P))^\perp \supseteq ({}^\perp
Q\cdot  \pi)\}\big)^\perp= (\widehat{\cE}({}^\perp P))^\perp \circ_{\perp} Q.\]
Note that the first inclusion is just \eqref{eqn:setados}.
\item The proof follows by substitution of $t$ by $\widehat{\cE}$ in
\eqref{eqn:setauno}.
\end{enumerate}
\end{proof} The theorem that follows recovers (partially) ``the
other half'' of the half adjunction property of Theorem
\ref{theorem:adjunction}.
\begin{theorem} \label{theo:adjunctionconverse} Let $P,Q,R \in \mathcal
P_\perp(\Pi)$. If \[R \subseteq P \circ_{\perp} Q \,\,\text{then}\,\, (Q
\Rightarrow_{\perp} R) \subseteq (\cE{}(^\perp P))^\perp \subseteq
(\cE\cE)^\perp \circ_{\perp}{} P.\]
\end{theorem}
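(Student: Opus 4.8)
The plan is to prove the two inclusions separately, recognizing each as an instance of a result already established, and to let axiom (S1) enter in exactly one place.

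The second inclusion, $(\cE({}^\perp P))^\perp \subseteq (\cE\cE)^\perp \circ_{\perp} P$, requires essentially no work: it is precisely item~\ref{item:adjunctorpaks} of Lemma~\ref{lem:setheoSeta} applied with $\widehat{\cE} = \cE$. The hypothesis of that item is that the combinator satisfy the $\cS\eta$ rule, and this holds for $\cE$ by Lemma~\ref{lem:aks}, item~\ref{lem:aks-e}. So I would simply cite it.

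For the first inclusion I would begin by unfolding the definition. By Definition~\ref{defi:maps2} we have $Q \imp_{\perp} R = ({}^\perp({}^\perp Q \cdot R))^\perp$, so the target $(Q \imp_{\perp} R) \subseteq (\cE({}^\perp P))^\perp$ is an inclusion between two orthogonals of subsets of $\Lambda$. Since the operator $(\ )^\perp \colon \mathcal P(\Lambda) \to \mathcal P(\Pi)$ is antitone (Remark~\ref{remark:initialrl}), it suffices to prove the \emph{reverse} inclusion at the level of term-sets, namely
\[
\cE({}^\perp P) \;\subseteq\; {}^\perp({}^\perp Q \cdot R).
\]
This is exactly the conclusion of equation~\eqref{eqn:seta3.1} taken with $\widehat{\cE} = \cE$, whose hypothesis is $R \subseteq P \diamond Q$. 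It therefore remains only to upgrade the assumption $R \subseteq P \circ_{\perp} Q$ to $R \subseteq P \diamond Q$, and this is the single point where the computational content intervenes: by Lemma~\ref{lem:circdiamond}\,(1), condition (S1) gives $P \circ_{\perp} Q \subseteq P \diamond Q$, whence $R \subseteq P \circ_{\perp} Q \subseteq P \diamond Q$. Chaining the two displayed inclusions yields the statement.

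The argument is thus a matter of assembling existing lemmas, and I do not expect a genuine obstacle. The only points needing care are bookkeeping ones: one must take the antitonicity reduction in the correct direction, reducing $(Q \imp_{\perp} R) \subseteq (\cE({}^\perp P))^\perp$ to the reverse term-set inclusion $\cE({}^\perp P) \subseteq {}^\perp({}^\perp Q \cdot R)$ rather than the other way around, and one must confirm that $\cE$ really satisfies the $\cS\eta$ rule so that both equation~\eqref{eqn:seta3.1} and item~\ref{item:adjunctorpaks} of Lemma~\ref{lem:setheoSeta} are legitimately available with $\widehat{\cE} = \cE$.
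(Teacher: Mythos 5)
Your proposal is correct and follows essentially the same route as the paper's proof: both reduce the hypothesis to $R \subseteq P \diamond Q$ via Lemma~\ref{lem:circdiamond}, both obtain $\cE({}^\perp P) \subseteq {}^\perp({}^\perp Q \cdot R)$ from the $\cS\eta$ consequences in Lemma~\ref{lem:setheoSeta} (you cite \eqref{eqn:seta3.1} directly, while the paper uses \eqref{eqn:setatres} plus the antitonicity step — but the paper declares these equivalent), and both conclude by taking orthogonals and invoking item~\eqref{item:adjunctorpaks} for the second inclusion. No gap.
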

\begin{proof} As $R \subseteq P \circ_{\perp} Q \subseteq P \diamond
  Q= ({}^\perp P {}^\perp Q)^\perp$ --see Lemma \ref{lem:circdiamond},
  we have that ${}^\perp Q \cdot R \subseteq {}^\perp Q\cdot ({}^\perp
  P {}^\perp Q)^\perp$ and ${}^\perp({}^\perp Q \cdot R) \supseteq
  {}^\perp({}^\perp Q \cdot ({}^\perp P {}^\perp Q)^\perp)$. By
  equation \eqref{eqn:setatres} we have that $\cE{}(^\perp P)
  \subseteq {}^\perp\big({}^\perp Q\cdot ({}^\perp P {}^\perp Q
  )^\perp \big) \subseteq {}^\perp({}^\perp Q \cdot R)= {}^\perp(Q
  \Rightarrow_{\perp}{} R)$. Taking orthogonals we obtain the first
  inclusion. The other inclusion is just Lemma \ref{lem:setheoSeta},
  \eqref{item:adjunctorpaks}, since by Lemma \ref{lem:aks},
  \eqref{lem:aks-e} $\cE$ satisfies the $\cS \eta$--rule.
\end{proof}
\begin{definition}\label{defi:adjunctor} In a $\mathcal{AKS}$ as above, the
combinator $\cE\cE \in \operatorname{QP}$ is called an adjunctor.
\end{definition}

\section{Implicative and Krivine ordered combinatory algebras}
In \cite{kn:streicher}, Streicher presented a construction of
an \emph{ordered combinatory algebra ($\oca$)}
(see~\cite[Section~1.8]{kn:vanOosbook}) out of an
$\mathcal {AKS}$ , from which he constructed a tripos whose predicates
are functions with values in the $\oca$.  This construction does not
give rise to a tripos in general, but only for some $\oca$s -- in
particular for those induced by $\mathcal {AKS}$ 's. The notion of
\emph{implicative ordered combinatory algebra} abbreviated as $\ioca$,
is an axiomatization of the additional structure that is necessary on
an $\oca$ to guarantee that the induced indexed preorder is a
tripos. The tripos will be classical in case the $\ioca$ has an
additional combinator, called $\comc$, that realizes Peirce's law. 

In this section we fix our attention on \emph{implicative ordered
  combinatory algebras}: $\ioca$\hspace*{1pt}s and the modification
consisting in adding the combinator $\comc$ that produces a
\emph{Krivine ordered
  combinatory algebra}: $\koca$.  The main features
added to the usual structure of an ordered combinatory algebra
--compare with \cite{kn:hofstra2006}-- are the following: a) we assume
the existence of a distinguished element, that we call
\emph{adjunctor}; b) we assume that the $\ioca$ is
$\operatorname{inf}$ --complete; c) we have an implication mapping
denoted as $\rightarrow$.  These additions are present in the
$\mathcal{OCA}$s that come from $\mathcal{AKS}$s, and will be crucial
ingredients in the construction of the associated tripos, that we
build up directly from the $\ioca$ --compare with
\cite{kn:streicher}--. See for example \cite{kn:partordcombalg} and
\cite{kn:vanOosbook} for the standard approach to the subject.

\subsection{Ordered combinatory algebras}

\begin{definition}
\begin{enumerate}
\item An \emph{ordered combinatory algebra} ($\oca$) is a quintuple
  $\mathcal A=(A,\leq,\rapp,\comk,\coms)$ --written frequently as
  $A$-- where $(A,\leq)$ is a partial order,
\[ \rapp: A\times A\to A, \qquad (a,b)\mapsto ab
\] is a monotone function, and $\comk,\coms$ are elements of $A$
satisfying
\begin{enumerate}
\item $\comk a b \leq a$
\item $\coms a b c\leq ac(bc)$
\end{enumerate} for all $a,b,c\in A$.
\item A \emph{filter} in an $\oca$ --called $A$-- is a subset
  $\Phi\subs A$ which contains $\coms$ and $\comk$ and is closed under
  application. A pair $(\mathcal A,\Phi)$ is called a filtered $\oca$.
\end{enumerate}
\end{definition}
\begin{remark} Here --and in the rest of this paper-- the product--like
operations will not be associative and we assume that when parenthesis
are omitted, we associate to the left.
\end{remark}
In what follows, we will recall how to program
directly in this $\mathcal{OCA}$, using the standard codifications in
the combinatory algebras.
\begin{definition} Let $A$ be an $\oca$ and take a denumerable set of
\emph{variables}: $\mathcal{V}=\{x_1,x_2,\cdots\}$ and consider
$A(\mathcal{V})$ --called \emph{the set of terms in $A$}-- that is the
set of formal expressions given by the following grammar: $p_1, p_2
::= a \quad |\quad x\quad |\quad p_1 p_2$ where $a\in A$ and
$x\in\mathcal{V}$. As usual $A(x_1,\dots,x_k)$ is the set of terms in $A$
containing only the variables $x_1,\cdots,x_k$.  One can naturally
extend the order in $A$ to an order in $A(\mathcal V)$ in such a way
that: if $p_1 \ R \ p_2, q_1 \ R \ q_2\in A(\mathcal{V})$ then $p_1q_1
\ R \ p_2q_2$ and if $p_1, p_2\in A(\mathcal{V})$ then $\ck p_1 p_2 \
R \ p_1$ and if $p_1, p_2, p_3\in A(\mathcal{V})$ then $\cs p_1 p_2
p_3 \ R \ p_1 p_3 (p_2 p_3)$.
\end{definition}

If we define an equivalence relation $\equiv_R$ on $A(\mathcal{V})$
as: $p \equiv_R q$ iff $p \ R \ q$ and $q \ R \ p$, the order $R$ can
be factored to a partial order in the quotient
$A[\mathcal{V}]:=A(\mathcal{V})/\equiv_R$ --called also the \emph{the
set of terms} in $A$--.

If $p_1, p'_1, p_2, p'_2$ are terms such that $p_1\equiv_R p'_1$ and
$p_2\equiv_R p'_2$ then $p_1 p_2 \equiv_R p'_1 p'_2$. Hence, the
application can be defined in the quotient and is written as $p_1
\star p_2$.

Hence, $(A[\mathcal{V}], R, \star)$ is an $\mathcal{OCA}$ and $(A,
\leq, \operatorname{app})$ is a sub-$\mathcal{OCA}$ of
$(A[\mathcal{V}], R, \star)$.
 
It is customary to denote the relation $R$ as $\leq$ and the
operation~$\star$ as~$\circ$ or as the concatenation of the
factors. In this situation we say that $(A[\mathcal{V}], \leq, \circ)$
is an extension of $(A, \leq, \circ)$.

The following result is well known.
\begin{theorem}[Combinatory completeness] \label{theo:calculusinA} For
  any finite set of variables $\{x_1,\cdots,x_k,y\} $, there is a
  function $\lambda^*y:A[x_1,\cdots,x_k,y] \rightarrow A[x_1,\cdots,
  x_k]$ satisfying the following property: \[\text{If}\quad t \in
  A[x_1,\cdots,x_k,y]\,\,,\,\,\text{and}\quad u \in A[x_1,\cdots,x_k]
  \quad \text{then} \quad (\lambda^*y (t))\circ u \leq
  t\{y:=u\}.\]  Moreover if $X \subseteq A$ is an arbitrary subset and
  $t$ is a term with all its coefficients in $X$, then $\lambda^*y(t)$
  is a term with all its coefficients in $\langle X \rangle$, the
  closure of $X$ by application. In
  particular if all the coefficients of $t$ are in the filter $\Phi$,
  then $\lambda^*y(t)$ is a polynomial with all the coefficients in
  $\Phi$. Occasionally we write $\lambda^*y(t)=\lambda^*y.t$.
\end{theorem}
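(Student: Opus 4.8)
The plan is to prove combinatory completeness by the standard bracket-abstraction argument, adapted to the ordered (inequational) setting. The target is to construct, for each term $t \in A[x_1,\dots,x_k,y]$, a term $\lambda^*y(t) \in A[x_1,\dots,x_k]$ with the $\beta$-like inequality $(\lambda^*y(t))\circ u \leq t\{y:=u\}$. First I would define $\lambda^*y$ by recursion on the structure of $t$ according to the grammar $p ::= a \mid x \mid p_1 p_2$, using the usual three clauses: $\lambda^*y(y) := \cs\ck\ck$ (the combinator $\ci$, which satisfies $\ci\circ u \leq u$); $\lambda^*y(t) := \ck\, t$ whenever $y$ does not occur in $t$ (covering both the atom case $t=a$ and the variable case $t=x_i$ with $x_i \neq y$); and $\lambda^*y(p_1 p_2) := \cs\,(\lambda^*y(p_1))\,(\lambda^*y(p_2))$ in the application case when $y$ genuinely occurs.

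The key verification is the reduction inequality, which I would prove by induction on $t$ using precisely the two defining inequalities of an $\oca$, namely $\comk a b \leq a$ and $\coms a b c \leq ac(bc)$, together with the monotonicity of application. In the constant/variable-distinct case, $(\ck\, t)\circ u \leq t = t\{y:=u\}$ directly from axiom (a). In the base case $t=y$, I would first check $\ci\circ u = \cs\ck\ck\circ u \leq \ck u(\ck u) \leq u$ using axiom (b) then axiom (a), so $\ci\circ u \leq u = y\{y:=u\}$. In the application case, $(\cs\,(\lambda^*y(p_1))\,(\lambda^*y(p_2)))\circ u \leq (\lambda^*y(p_1)\circ u)\,(\lambda^*y(p_2)\circ u)$ by axiom (b), and then by monotonicity together with the two induction hypotheses $\lambda^*y(p_i)\circ u \leq p_i\{y:=u\}$, this is bounded by $p_1\{y:=u\}\,p_2\{y:=u\} = (p_1 p_2)\{y:=u\}$. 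This case is where the ordered setting is genuinely used: equality would fail, but the inequality propagates through monotone application, which is why the whole statement is phrased with $\leq$ rather than $\equiv_R$.

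For the "moreover" clauses about coefficients, I would observe that the abstraction algorithm introduces only the combinators $\comk,\coms$ (and $\ci = \cs\ck\ck$, itself built from them) as new constants, applied to subterms built from coefficients already appearing in $t$. Hence a syntactic induction shows that every coefficient of $\lambda^*y(t)$ lies in the closure under application of $\{\comk,\coms\}$ together with the coefficients of $t$; if those coefficients lie in an arbitrary $X$, then since $\comk,\coms \in \langle X\rangle$ is not assumed, I would state the claim relative to $\langle X\cup\{\comk,\coms\}\rangle$, but for the filter case it is immediate: a filter $\Phi$ contains $\comk,\coms$ by definition and is closed under application, so if all coefficients of $t$ lie in $\Phi$ then so do all coefficients of $\lambda^*y(t)$, which is exactly the final assertion.

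The main obstacle, and the only place requiring care, is the application case of the reduction inequality: one must invoke $\coms$-expansion and monotonicity in the correct order and confirm that the two independent $u$-substitutions combine correctly, since the ordered algebra gives only one-directional bounds and there is no way to recover equality. Everything else is routine structural induction; the genuinely substantive ingredients are exactly the two defining inequalities of the $\oca$ and monotonicity of $\rapp$, all of which are available from the definition stated earlier in the excerpt.
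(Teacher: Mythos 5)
Your proof is correct and follows essentially the same route as the paper's: the same recursive bracket-abstraction clauses ($\ck$-abstraction for terms not containing $y$, $\cs\ck\ck$ for $y$ itself, $\cs$-abstraction for applications), with the $\beta$-like inequality verified by the standard induction using only the two $\oca$ axioms and monotonicity of application. If anything, you are more careful than the paper on the coefficient clause: the paper's proof simply asserts that $\langle X\rangle$ contains $\ck,\cs$, which does not follow from the statement's definition of $\langle X\rangle$ as the closure of $X$ under application alone, and your relativization to $\langle X\cup\{\comk,\coms\}\rangle$ in the general case --- noting that the filter case is unaffected since $\comk,\coms\in\Phi$ by definition --- is precisely the repair that claim needs.
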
 
\begin{proof} The function $\lambda^*y$ is defined recursively: i) If
$y\neq x$, then $\lambda^*y(x):= \ck x$; ii) $\lambda^*y(y):=
\cs\ck\ck$; iii) if $p, q$ are polynomials, then: $\lambda^*y(p q):=
\cs(\lambda^*y(p)) (\lambda^*y(q))$. From the fact that $\langle
X\rangle$ contains $\ck, \cs$ and it is closed under applications, we
deduce the condition on the coefficients of $\lambda^*y(t)$.
\end{proof}

We use combinatory completeness to define some combinators that we
will use later.
\begin{definition}\label{defi:basiccomb}
  Let $A$ be an $\oca$, we can define the following combinators or
  combinatorial functions that are elements of $\Phi$, or functions
  with codomain and domain $\Phi$.
  \begin{align*} \comb &= \lambda^* x\lambda^* y\lambda^* z(x(yz)) &
    \comi &= \lambda^* x (x) & \comc &= \lambda^* x\lambda^*
    y\lambda^* z (zxy) & \quad \quad \quad \comw &= \lambda^* x\lambda^* y(xyy)\\
    \comt &= \lambda^* x\lambda^* y (x) & \comf &= \lambda^*
    x\lambda^* y (y) & \comp &= \lambda^* x \lambda^* y \lambda^* z
    (zxy)&\hspace*{-1cm}\compz &= \lambda^* x (x\,\comt)\,\,\,\,\,\,
    \compo =
    \lambda^* x (x\,\comf)\\ 
\end{align*}
\vspace*{-1.2cm}
\[\coma: \Phi \times \Phi \rightarrow \Phi \,\,\,\,\,\,\,
\coma(r,s)= \lambda^*x(\comp(rx)(sx)) \quad \comd:\Phi \rightarrow
\Phi \quad \comd(f)=\lambda^* x (f (\compz x) (\compo x)).\] 
\end{definition}
\begin{lemma}\label{lem:basicomb2} 
 If $A$ is an $\oca$, the above definitions ensure that:
\begin{align*}\comb abc \leq a(bc)\,;\,\comi
  a \leq a \,;\, \comc abc \leq acb\,;\,& \comw ab \leq
  abb\,;\,\compz(\comp ab)\leq a\,;\,\compo(\comp a b)\leq b;\\ rc
  \leq a,sc\leq b \imp \coma(r,s)c \leq &\comp ab\,\,;\,\,
  \comd(f) \ell\leq f (\compz
  \ell) (\compo \ell).\\
\end{align*}
 for all $a,b,c,\ell\in A$.
\end{lemma}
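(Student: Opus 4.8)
The plan is to derive every inequality from a single mechanism: the defining property of $\lambda^*$ supplied by Theorem~\ref{theo:calculusinA}, namely $(\lambda^*y(t))\circ u\leq t\{y:=u\}$, together with the monotonicity of $\rapp$. First I would promote this one-variable rule to a multi-argument version: for a term $t\in A[x_1,\dots,x_n]$ and elements $a_1,\dots,a_n\in A$, one has $(\lambda^*x_1\cdots\lambda^*x_n(t))\,a_1\cdots a_n\leq t\{x_1:=a_1,\dots,x_n:=a_n\}$. This follows by induction on $n$: contracting the outermost abstraction via Theorem~\ref{theo:calculusinA} leaves $(\lambda^*x_2\cdots\lambda^*x_n(t))\{x_1:=a_1\}$, which equals $\lambda^*x_2\cdots\lambda^*x_n(t\{x_1:=a_1\})$ because substituting a constant of $A$ commutes with the recursive clauses defining $\lambda^*$; the remaining arguments $a_2,\dots,a_n$ are then fed in through the induction hypothesis, letting them ride along under $\rapp$ by monotonicity.

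With this in hand, the four ``pure'' combinators are immediate. Applying the multi-argument rule to the bodies from Definition~\ref{defi:basiccomb} gives $\comb abc\leq (x(yz))\{x:=a,y:=b,z:=c\}=a(bc)$, $\comi a\leq a$, $\comw ab\leq abb$, and likewise for $\comc$, where the rule evaluates the body $zxy$ at the three arguments. (One should check here that the stated reduct for $\comc$ matches the chosen body, but the computation itself is entirely mechanical.)

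The composite combinators need one extra chaining step. For $\compz(\comp ab)$ I would first contract $\compz w\leq w\,\comt$, then combine monotonicity of $\rapp$ with the partial reduction $\comp ab\leq\lambda^*z(zab)$ to get $\compz(\comp ab)\leq(\comp ab)\comt\leq\comt ab\leq a$; the case of $\compo$ is symmetric with $\comf$ in place of $\comt$. For $\coma(r,s)c$ the rule gives $\coma(r,s)c\leq\comp(rc)(sc)$, and then the hypotheses $rc\leq a$, $sc\leq b$ together with monotonicity of $\comp$ in both arguments yield $\comp(rc)(sc)\leq\comp ab$. Finally $\comd(f)\ell\leq f(\compz\ell)(\compo\ell)$ is a direct single-variable contraction.

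The main obstacle is not any individual inequality but the bookkeeping in the first paragraph: making precise that substitution of elements of $A$ commutes with $\lambda^*$, so that each intermediate term obtained after a contraction is again an honest $\lambda^*$-abstraction to which Theorem~\ref{theo:calculusinA} applies. The only places where more than a one-step contraction is genuinely required are $\compz$ and $\compo$, where one must first reduce $\comp$ partially and only then feed in $\comt$ or $\comf$; everywhere else monotonicity of $\rapp$ absorbs the remaining arguments.
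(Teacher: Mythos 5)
Your proposal is correct and matches the paper's (implicit) argument: the paper states Lemma~\ref{lem:basicomb2} with no proof at all, treating it as an immediate consequence of combinatory completeness (Theorem~\ref{theo:calculusinA}), and your write-up is exactly that argument made explicit, including the one genuinely non-trivial bookkeeping point --- that substituting elements of $A$ for variables commutes with $\lambda^*$, which holds here precisely because the paper's recursive definition of $\lambda^*$ is the unoptimized one (clauses for variables and applications only, no $\eta$-style shortcuts), so the commutation is a straightforward induction on the term. One remark on the item you flagged: with Definition~\ref{defi:basiccomb} as printed, $\comc$ has body $zxy$ (identical to $\comp$), so the mechanical computation yields $\comc abc\leq cab$ rather than the stated $\comc abc\leq acb$; this is a typo in the paper (the body should be $xzy$), and your parenthetical caveat is the right call, while the rest of your chains --- $\compz(\comp ab)\leq(\comp ab)\comt\leq\comt ab\leq a$ and its mirror image for $\compo$, the monotonicity step for $\coma(r,s)$, and the one-step contractions for $\comb$, $\comi$, $\comw$, $\comd$ --- are all sound.
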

\subsection{Implicative ordered combinatory algebras}

\begin{definition} \label{defi:ioca}An implicative ordered combinatory
  algebra --a $\ioca$--, consists of an $\operatorname{inf}$--complete
  partially ordered set $(A,\leq)$ equipped with:
\begin{enumerate}
\item binary operations
\[\rapp:A\times A\to A,\qquad(a,b)\;\mapsto\; a b\] called
\emph{application}, monotone in both arguments, and
\[\mathrm{imp}:A^\op\times A\to A,\qquad (a,b)\;\mapsto\; a\to b\]
called \emph{implication}, antitonic in the first argument and monotone
in the second;
\item a subset $\Phi\subseteq A$ (called \emph{filter}) which is
closed under application;
\item distinguished elements $\coms,\comk,\come\in\Phi$
\end{enumerate} such that the following holds for all $a,b,c\in A$.
\begin{itemize}
\item[\rpaxk] $\comk a b\leq a$
\item[\rpaxs] $\coms a b c\leq a c (b c)$
\item[\rpaxha] $a\leq b\to c\quad\imp\quad a b\leq c$
\item[\rpaxe] $a b\leq c\quad\imp\quad \come a \leq b\to c$
\end{itemize}
\end{definition}  
\medskip

\subsection{Krivine ordered combinatory algebras}

\begin{definition} \label{defi:oca}A Krivine ordered combinatory
  algebra --a $\koca$--, consists of an $\ioca$ equipped with a
  distinguished element $\comc\in\Phi$ such that for all $a,b\in A$,
\[\emph{\rpaxc} \quad \comc\leq ((a\to b)\to a )\to a.\]
\end{definition}  
\medskip

Next we show that in the definition of a $\koca$ (and also of a
$\ioca$) some of its elements are superfluous and can be obtained from
the others. Here we present a minimal setup for the concept.

\begin{definition}\label{defi:roca} A quadruple
  $\mathcal Q=(A,\leq,\rightarrow,\Phi)$ where 
\begin{enumerate}
\item The relation $\leq$ is a partial order in $A$ with the property
that each $X \subseteq A$ has an infimum.
\item The map $\rightarrow: A \times A \rightarrow A$
--\emph{implication}-- is Antone in the first variable and
monotone in the second.
\item $\Phi \subset A$ --a \emph{filter}-- is a subset of $A$.
\end{enumerate} 
is said to be proper if $\Phi$ satisfies the following
conditions.

Define:
\begin{enumerate}
\item a map $\rapp : A \times A \rightarrow A$ --called the
  application-- given for
  $a,b \in A$ as: $$\rapp(a,b)=ab:=\operatorname{inf}\{c:a \leq (b
  \rightarrow c)\}.$$ 
\item $\ck:=\operatorname{inf}\{a \rightarrow (b \rightarrow a): a,b
\in A\}$.
\item $\cs:=\operatorname{inf}\{a \rightarrow (b \rightarrow (c
\rightarrow (ac)(bc))): a,b,c \in A\}$.
\item $\ce:=\operatorname{inf}\{a \rightarrow (b \rightarrow ab): a,b
\in A\}$.
\item $\cc:=\operatorname{inf}\{(((a \rightarrow b) \rightarrow
a)\rightarrow a): a,b \in A\}$.
\end{enumerate}
then
\begin{enumerate}
\item The set $\Phi$ is closed under the application: $\rapp(\Phi,\Phi)
\subset \Phi$.
\item The elements $\ck,\cs,\ce, \cc \in \Phi$.
\end{enumerate}
\end{definition}

\begin{theorem}\label{theo:allconnected}
  If $\mathcal Q= (A,\leq,\rightarrow,\Phi)$ is proper (Definition
  \ref{defi:roca}), then $\mathcal A_\mathcal
  Q=(A,\leq,\rightarrow,\Phi,\rapp,\ck,\cs,\ce,\cc)$ is a $\koca$.
\end{theorem}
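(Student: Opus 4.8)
The plan is to check the defining clauses of a $\koca$ for $\mathcal A_\mathcal Q$ one at a time, using as the single essential ingredient the ``half-adjunction'' that is hard-wired into the definition of application, namely $ab=\operatorname{inf}\{c:a\leq(b\to c)\}$. Since $(A,\leq)$ is inf-complete by hypothesis, this infimum always exists (the infimum of the empty family being the top element), so $\rapp$ is a well-defined total operation. The axiom \rpaxha\ is then immediate: if $a\leq b\to c$ then $c$ lies in the set $\{c':a\leq(b\to c')\}$, so $ab$, being the infimum of that set, satisfies $ab\leq c$.

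Next I would verify that $\rapp$ is monotone in both arguments. For the first argument, if $a_1\leq a_2$ then every $c$ with $a_2\leq b\to c$ also satisfies $a_1\leq b\to c$, so the defining set for $a_2b$ is contained in that for $a_1b$; since the infimum reverses inclusions, $a_1b\leq a_2b$. For the second argument I use that $\to$ is antitone in its first variable: if $b_1\leq b_2$ and $a\leq b_2\to c$, then $a\leq b_2\to c\leq b_1\to c$, so again the defining set for $ab_2$ is contained in that for $ab_1$, giving $ab_1\leq ab_2$.

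With \rpaxha\ and monotonicity available, the remaining inequalities follow by repeated ``uncurrying''. Each combinator is defined as the infimum of its defining family, so for any fixed arguments it lies below the corresponding member of that family. For \rpaxk, from $\ck\leq a\to(b\to a)$ two applications of \rpaxha\ give $\ck a\leq b\to a$ and then $\ck ab\leq a$. For \rpaxs, from $\cs\leq a\to(b\to(c\to(ac)(bc)))$ three applications of \rpaxha\ give $\cs abc\leq(ac)(bc)$. For \rpaxe, from $\ce\leq a\to(b\to ab)$ one application of \rpaxha\ yields $\ce a\leq b\to ab$, and if moreover $ab\leq c$ then monotonicity of $\to$ in its second variable gives $b\to ab\leq b\to c$, whence $\ce a\leq b\to c$. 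Finally \rpaxc\ holds by inspection, since $\cc\leq((a\to b)\to a)\to a$ directly from the definition of $\cc$ as an infimum.

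It then remains only to assemble the pieces. The properness hypothesis supplies that $\Phi$ is closed under $\rapp$ and contains $\ck,\cs,\ce,\cc$; $(A,\leq)$ is inf-complete and $\to$ has the stated variance by assumption; and the inequalities \rpaxk, \rpaxs, \rpaxha, \rpaxe\ just verified show that $\mathcal A_\mathcal Q$ is an $\ioca$, which the inequality \rpaxc\ upgrades to a $\koca$. The main obstacle here is conceptual rather than technical: one has to recognize that the infimum definition of application is engineered precisely so that \rpaxha\ comes for free, and that defining every combinator as the infimum of the right-hand sides of its intended law makes each law fall out mechanically. The only points demanding genuine care are the two monotonicity arguments, where the variance of $\to$ must be invoked in the correct direction.
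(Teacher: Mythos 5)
Your proof is correct and follows essentially the same route as the paper's: derive \rpaxha\ directly from the infimum definition of application, then obtain \rpaxk, \rpaxs, \rpaxe\ by uncurrying each combinator's defining inequality via repeated use of \rpaxha\ (plus monotonicity of $\to$ for \rpaxe), with \rpaxc\ immediate from the definition of $\cc$. In fact you supply more detail than the paper, which merely asserts the monotonicity of application and the $\ck,\cs$ inequalities; your explicit variance arguments fill exactly those gaps.
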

\begin{proof}
  To prove the \emph{half adjunction property}, assume that for $a,b,c
  \in A$, $a \leq (b \rightarrow c)$ as $ab$ is the infimum of the
  elements $c$ with the above property, it is clear that in this
  situation $ab \leq c$ (condition (PA) of Definition \ref{defi:oca}:
  the ``half adjunction property''). The fact that the application
  $(a,b) \mapsto ab$ is monotone in both variables follows directly
  from the definition by using the monotony properties of the
  implication.  If $a,b,c \in A$ are such that $ab \leq c$, by
  definition we know that $\ce \leq a\rightarrow (b \rightarrow ab)$
  and then that $\operatorname{e} \leq a\rightarrow (b \rightarrow
  c)$. Applying the \emph{half adjunction property} ((PA) in the
  previous definition) we obtain that $\ce a \leq (b \rightarrow
  c)$. The satisfaction by $\ck,\cs$ of the required properties
  follows by a direct application of (PA) and the condition for $\cc$
  is evidently satisfied.
\end{proof}

\section{Indexed preorders and triposes}
\subsection{Preorders, meet semi-lattices and Heyting preorders}\label{sec-preorders}

\begin{definition} We denote by $\catord$ the category of preorders
  and monotone maps. A preorder $(D,\leq)$ is a set $D$ with a
  reflexive and transitive relation $\leq$. A \emph{monotone map}
  between the preorders $(D,\leq)$ and $(E,\leq)$ is a function
  $f:D\to E$ such that $d\leq d'$ implies $f(d)\leq f(d')$ for all
  $d,d'\in D$. If $d\leq d'$ and $d'\leq d$, we say that $d$ and $d'$
  are \emph{isomorphic}, and write $d\cong d'$.
\end{definition}

\begin{definition} Let $\cord$ and $\dord$ be two preorders.
\begin{enumerate}
\item For monotone maps $f,g:\cord\to\dord$, we define $f\leq
  g\defequi \forall d\in D\qdot f(d)\leq g(d)$ and say that $f$ and
  $g$ are \emph{isomorphic} (written $f\cong g$) if $f\leq g$ and
  $g\leq f$.
\item A monotone map $f:\cord\to\dord$ is called an
  \emph{equivalence}, if there exists a monotone map $g:\dord\to\cord$
  such that $g\circ f\cong \id_D$, and $f\circ g\cong\id_E$ and $g$ is
  called a \emph{weak inverse} of $f$. In this situation we say that
  $\cord$ and $\dord$ are \emph{equivalent} (written $\dord\simeq\eord$).
\item Given monotone
maps $f:\cord\to\dord$, $g:\dord\to\cord$, we say that `$f$ is left
adjoint to $g$', or `$g$ is right adjoint to $f$', and write $f\adj
g$, if $\id_C\leq g\circ f$ and $f\circ g\leq \id_D$. 
\end{enumerate}
\end{definition}
\begin{remark}\label{remark:equiv-ord} The following assertions are
  easy to prove. 
\begin{enumerate}
\item \label{item:equiv-ord}A monotone map $f:\cord\to\dord$ is
an equivalence if and only if it is \emph{order reflecting} and
\emph{essentially surjective}, i.e.\
\begin{enumerate}
\item $\forall c,c'\in D\qdot f(c)\leq f(c')\imp c\leq c'$, and
\item $\forall d\in D\;\exists c\in C\qdot f(c)\cong d$.
\end{enumerate}
\item \label{item:adj} Let $f:\cord\to\dord$, $g:\dord\to\cord$
be monotone maps between preorders.
\begin{enumerate}
\item $f$ is left adjoint to $g$, if and only if
$\forall c\in C,\, d\in D\qdot f(c)\leq d \equi c\leq g(d)$.

\item Adjoints are unique up to isomorphism, i.e.\ when $f\adj g$ and
$f\adj g'$, then $g\cong g'$ (and similarly for left adjoints).
\end{enumerate}
\end{enumerate}
\end{remark}

\begin{definition} A meet semi-lattice is a preorder $\dord$ equipped
  with a binary operation $\wedge$ and a distinguished element $\top$
  such that for all $a,b,c\in D$:
\begin{enumerate}
\item \label{item:meet1} $a\wedge b\leq a$; 
\item \label{item:meet2} $a\wedge b\leq b$; 
\item \label{item:meet3} $c\leq a\text{ and } c\leq b \imp c\leq
  a\wedge b$;
\item \label{item:top} $a\leq \top$. 
\end{enumerate}
\end{definition}
\begin{remark} If $\dord$ is a meet semi-lattice, then the function
$(d,d')\mapsto d\wedge d'$ is a monotone map of type $D\times D\to D$,
which is right adjoint to the diagonal map $\delta:D\to D\times D$,
$d\mapsto (d,d)$.
\end{remark}
\begin{definition} $\catslat$ is the category of meet semi-lattices,
and \emph{meet preserving monotone maps}, i.e.\ monotone maps
$f:\dord\to\eord$ such that
\begin{enumerate}
\item $f(d)\wedge f(d')\cong f(d\wedge d')$ for all $d,d'\in D$
\item $f(\top)\cong \top$.
\end{enumerate}
\end{definition}
\begin{definition} We define $\cathpo$, the category of Heyting
  preorders and morphisms.
\begin{enumerate}
\item A \emph{Heyting preorder} is a meet semi-lattice $(A,\leq)$ with a
  binary operation \\ $\to:A\times A\to A$ (called \emph{Heyting
    implication}) satisfying
\begin{equation}\label{eq:heyting-implication} a\wedge b\leq
c\qqtext{if and only if}a\leq b\to c\tag{HI}
\end{equation} for all $a,b,c\in A$.
\item A \emph{morphism of Heyting preorders} 
is a monotone map $f:(A,\leq)\to (B,\leq)$ such that
\begin{enumerate}
\item $f(\top)\cong\top$
\item $f(a\wedge b) \cong f(a)\wedge f(b)$
\item $f(a\to b)\cong f(a)\to f(b)$
\end{enumerate} for all $a,b\in A$.

\end{enumerate}
\end{definition}
\begin{remarks}
\begin{enumerate}
\item The term `Heyting preorder' is not standard, but it is the same
as a `posetal Cartesian closed category', or equivalently a preorder
whose poset reflection is a `Heyting
semi-lattice'~\cite[Part~A1.5]{elephant1}.
\item A Heyting preorder with finite joins is what is called a
\emph{Heyting prealgebra}, e.g.\
in~\cite{kn:vanOosbook}. The anti-symmetric version is the
well known concept of \emph{Heyting algebra}.
\item To interpret disjunction we also want joins in triposes, but we
  don't have to postulate disjunction (and neither $\exists$), since
  they can be encoded in terms of the other connectives in second
  order logic.
\item Also, we don't have to demand Heyting implication to be monotone
  -- it follows from the definition that it is antitonic in the first,
  and monotone in the second variable.
\end{enumerate}
\end{remarks}
\subsection{Preorders associated to $\mathcal{AKS}$s,
  $\oca$\hspace*{1pt}s and $\ioca$\hspace*{1pt}s}

\[\mathcal{AKS}\]

\begin{definition} Let $\mathcal K=(\Lambda,\Pi,\cdots)$ be an
  abstract Krivine structure. We define the relation $\sqsubseteq$ in
  $\power(\Pi)$ as follows:
\begin{equation}\label{eq:entailment} P,Q \in \power(\Pi)\,, \quad 
  P\sqsubseteq  Q\qdefequi
  \exists t\in \operatorname{QP}\; t\orth P\imp Q
\end{equation} for $P, Q \in \power(\Pi)$. An element $t
\in \Phi$ as above is said to be ``a realizer of the
relation'' $P\sqsubseteq Q$''.
\end{definition}
\begin{remark}\label{remark:equal}
  Notice that the relation above, could have been defined
  using the arrow $\Rightarrow_{\perp}$. Indeed, $t\orth
  P \imp Q$ if and only if $t\orth
  P \imp_{\perp}{} Q$ by Lemma \ref{lem:aks}, \eqref{lem:aks-0}.
\end{remark}
\begin{lemma}\label{lem:aks-ord} Let $\mathcal K$ be an abstract
  Krivine structure, then the relation $\sqsubseteq$ is a preorder on
  $\power(\Pi)$.
\end{lemma}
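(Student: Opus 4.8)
The plan is to verify the two defining properties of a preorder, reflexivity and transitivity, by exhibiting in each case an explicit quasi-proof that realizes the required implication. I would work directly at the level of the orthogonality relation on processes, using the definition $P\imp Q={}^\perp P\cdot Q$ from \eqref{eqn:maps2}, rather than quoting the realizer statements of Lemma \ref{lem:aks} that are phrased for elements of $\mathcal P_\perp(\Pi)$; this is important because $\sqsubseteq$ is defined on all of $\power(\Pi)$.

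For reflexivity I would show that $\cI=\cS\cK\cK$ realizes $P\imp P$ for every $P\in\power(\Pi)$. Since $P\imp P={}^\perp P\cdot P$, every element of this set has the form $a\push\pi$ with $a\in{}^\perp P$ and $\pi\in P$, so it suffices to check $\cI\orth a\push\pi$ for all such $a,\pi$. But $a\in{}^\perp P$ and $\pi\in P$ give $a\orth\pi$, and then Lemma \ref{lem:aks}, \eqref{lem:aks-i-orth} yields $\cI\orth a\push\pi$ immediately. As $\cK,\cS\in\operatorname{QP}$ and $\operatorname{QP}$ is closed under application, $\cI\in\operatorname{QP}$, so $P\sqsubseteq P$.

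For transitivity, suppose $P\sqsubseteq Q$ and $Q\sqsubseteq R$ are witnessed by $r,s\in\operatorname{QP}$ with $r\orth P\imp Q$ and $s\orth Q\imp R$. I claim the composite $\cB s r$ realizes $P\imp R$, and since $\cB=\cS(\cK\cS)\cK\in\operatorname{QP}$ and $r,s\in\operatorname{QP}$, the witness lies in $\operatorname{QP}$. To see the claim, fix $a\in{}^\perp P$ and $\pi\in R$; I must show $\cB s r a\orth\pi$. From $r\orth{}^\perp P\cdot Q$ and $a\in{}^\perp P$ one gets $ra\orth\sigma$ for every $\sigma\in Q$, that is $ra\in{}^\perp Q$; then from $s\orth{}^\perp Q\cdot R$ together with $ra\in{}^\perp Q$ and $\pi\in R$ one obtains $s(ra)\orth\pi$. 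Applying Lemma \ref{lem:aks}, \eqref{lem:ask-borth} with $t:=s$, $u:=r$, $v:=a$ converts $s\orth(ra)\push\pi$ into $\cB\orth s\push r\push a\push\pi$, i.e.\ $\cB s r a\orth\pi$ after using (S1) three times. Hence $\cB s r\orth{}^\perp P\cdot R=P\imp R$, giving $P\sqsubseteq R$.

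The only real subtlety, and the reason to argue on processes rather than invoke the $\mathcal P_\perp(\Pi)$-indexed facts \eqref{lem:aks-mp} and \eqref{lem:aks-b-real}, is the domain mismatch: $\sqsubseteq$ ranges over arbitrary subsets of $\Pi$, whereas those statements assume closed subsets. The elementary implications \eqref{lem:aks-i-orth} and \eqref{lem:ask-borth} are consequences of the raw axioms (S1)--(S3) and hold for arbitrary terms and stacks, so the argument is uniform. Beyond this, the main thing to get right is the bookkeeping of the composition of realizers (applying $r$ before $s$, so the witness is $\cB s r$); there is no deeper obstacle.
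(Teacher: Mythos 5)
Your proof is correct and is essentially the paper's own argument --- $\cI$ realizing $P\sqsubseteq P$ and a $\cB$-composite of the two given realizers witnessing transitivity --- with the orthogonality bookkeeping made explicit via Lemma \ref{lem:aks}, \eqref{lem:aks-i-orth} and \eqref{lem:ask-borth}, which, as you rightly note, hold for arbitrary terms and stacks and hence for arbitrary subsets of $\Pi$. Two details: your composition order $\cB s r$ is the right one (the paper's proof writes $\cB t u$ with $t$ realizing $P\sqsubseteq Q$ and $u$ realizing $Q\sqsubseteq R$, which has the arguments transposed, since the outer realizer must come first); and the statement you actually need is $\cB s r\orth a\push\pi$, which your chain reaches after \emph{two} applications of (S1), rather than the further --- and not reversible --- consequence $\cB s r a\orth\pi$ that you state as the goal.
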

\begin{proof} The combinator $\cI$ is
  a realizer of $P\sqsubseteq P$ for any $P \in \power(\Pi)$, thus $\sqsubseteq$ is reflexive. For transitivity, assume
  that $P, Q,R \in \power(\Pi)$, and that
  $t,u\in\operatorname{QP}$ are realizers of $P\sqsubseteq Q$ and
  $ Q\sqsubseteq R$, respectively.  Then $\cB tu$ is a realizer of
  $P\sqsubseteq R$.
\end{proof}

\begin{lemma}\label{lem:eqpo} The canonical inclusion
$\power_{\perp}(\Pi) \eincl\power(\Pi)$ is an equivalence of 
preorders with respect to $\sqsubseteq$. 
\end{lemma}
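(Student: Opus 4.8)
The plan is to show that the inclusion $\power_\perp(\Pi) \hookrightarrow \power(\Pi)$ is an equivalence of preorders by invoking the criterion of Remark~\ref{remark:equiv-ord}, item~\eqref{item:equiv-ord}: a monotone map is an equivalence exactly when it is order-reflecting and essentially surjective. First I would check that the inclusion is indeed monotone, which is immediate since the relation $\sqsubseteq$ on $\power_\perp(\Pi)$ is simply the restriction of $\sqsubseteq$ on $\power(\Pi)$; order-reflectingness is then automatic for the same reason. So the whole content of the lemma is essential surjectivity: for every $P \in \power(\Pi)$ I must exhibit some $\widehat{P} \in \power_\perp(\Pi)$ with $P \cong \widehat P$, i.e.\ $P \sqsubseteq \widehat P$ and $\widehat P \sqsubseteq P$.

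The natural candidate is the double-orthogonal closure $\widehat P := ({}^\perp P)^\perp$, which lies in $\power_\perp(\Pi)$ by the very definition of $\power_\perp(\Pi)$ and satisfies $P \subseteq \widehat P$ by Remark~\ref{remark:initialrl}, item~(2). The key observation is that $P$ and $\widehat P$ have \emph{the same} set of realizers on the left, namely ${}^\perp P = {}^\perp(({}^\perp P)^\perp) = {}^\perp \widehat P$, again by Remark~\ref{remark:initialrl}, item~(2). I would then unwind what $P \sqsubseteq \widehat P$ means via \eqref{eq:entailment}: I need a quasi-proof $t$ with $t \orth P \imp \widehat P$, where by Remark~\ref{remark:equal} I may equivalently work with $\imp_\perp$. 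Since $P \imp \widehat P = {}^\perp P \cdot \widehat P$ and ${}^\perp P = {}^\perp\widehat P$, the identity combinator $\cI$ should realize both directions: intuitively, a term that realizes $P$ also realizes $\widehat P$ and vice versa because the left-orthogonals coincide, so the identity transports realizers between them.

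More concretely, I would verify that $\cI$ realizes $P \sqsubseteq \widehat P$ and that $\cI$ realizes $\widehat P \sqsubseteq P$, mimicking the reflexivity argument in Lemma~\ref{lem:aks-ord} where $\cI$ realizes $P \sqsubseteq P$. The point is that the realizer condition $\cI \orth P \imp_\perp \widehat P$ only depends, through the operations $\imp$ and $\diamond$ of Definitions~\ref{defi:maps1} and~\ref{defi:maps2}, on the orthogonal sets ${}^\perp P$ and ${}^\perp\widehat P$, which are equal; hence the same combinator that witnesses $P \sqsubseteq P$ witnesses $P \sqsubseteq \widehat P$ and $\widehat P \sqsubseteq P$. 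This gives $P \cong \widehat P$ with $\widehat P \in \power_\perp(\Pi)$, establishing essential surjectivity and completing the proof.

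\textbf{The main obstacle} I anticipate is purely bookkeeping rather than conceptual: one must be careful to express the entailment $P \sqsubseteq \widehat P$ in terms of the correct orthogonal operation and then confirm that the relevant $\imp$ (or $\imp_\perp$) value truly factors through ${}^\perp P$ alone, so that the equality ${}^\perp P = {}^\perp \widehat P$ can be applied to transport the realizer. Once that reduction is made explicit, the argument collapses to the reflexivity computation already carried out for $\sqsubseteq$, so I expect no genuine difficulty beyond chasing the orthogonality identities of Remark~\ref{remark:initialrl}.
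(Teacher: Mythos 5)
Your proof is correct and follows essentially the same route as the paper's: both reduce the claim via Remark~\ref{remark:equiv-ord} to order-reflection (immediate, since the order on $\power_\perp(\Pi)$ is the restriction) plus essential surjectivity, witnessed by the double-orthogonal closure $({}^\perp P)^\perp$, with both entailments realized by $\cI$ through the identity ${}^\perp P = {}^\perp(({}^\perp P)^\perp)$. The paper phrases this as $\cI \orth {}^\perp(({}^\perp P)^\perp)\cdot P = {}^\perp P\cdot P$ and $\cI \orth {}^\perp P\cdot ({}^\perp P)^\perp$, citing Lemma~\ref{lem:aks}, \eqref{lem:aks-i-real} applied to $P$ and to $({}^\perp P)^\perp$ respectively, which is exactly your argument of transporting the reflexivity realizer along the equality of left orthogonals.
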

\begin{proof} By Remark~\ref{remark:equiv-ord} it is suffices to show
that the inclusion is order reflecting and essentially surjective. Since
the order on $\pobot(\Pi)$ is defined as restriction of the order
on $\power(\Pi)$ the first assertion is clear.

To prove that the inclusion is essentially surjective, we show that
$P\sqsubseteq({}^\perp P)^\perp$ and
$({}^\perp P)^\perp\sqsubseteq P$ for all
$P \in \power(\Pi)$. This holds since
\[ \cI\orth {}^\perp(({}^\perp P)^\perp)\cdot P = {}^\perp P \cdot
P\qqtext{and} \cI\orth {}^\perp P\cdot ({}^\perp P)^\perp \] for all
$P\subs\Pi$. Both relations are realized by $\cI$ as follows
directly from Lemma \ref{lem:aks}, \eqref{lem:aks-i-real} applied 
  in the cases of $P$ and $({}^\perp P)^\perp$, respectively.
\end{proof}
\vspace*{-.5cm}
\[\oca\]
\begin{definition}\label{defi:squareord} Let $(\mathcal A,\Phi)$
  be a filtered $\oca$. We define:
 \begin{enumerate}
\item The relation
  $\sqsubseteq_\Phi$ in $A$ as follows:
\[a \sqsubseteq_\Phi b, \text{if and only if}\,\, \exists f \in \Phi:
f a \leq b.\]
\item A map $\wedge: A \times A \rightarrow A$ as $a \wedge b:= \comp ab$
  --see Definition \ref{defi:basiccomb}. 
\item An element $\top \in \Phi$ 
\end{enumerate}
\end{definition}

Usually we omit the subscript $\Phi$ in the notation of the relation
$\sqsubseteq_\Phi$ and write $a \sqsubseteq b$. An the element
$f$ as above is said to be ``a realizer of the relation $a \sqsubseteq
b$'' and write this assertion as $f \Vdash a \sqsubseteq b$.

We establish some properties that will be of later use.
\begin{lemma} \label{lem:meetoca}If $(\mathcal A,\Phi)$ is a filtered
  $\oca$ then in the notations of Definition \ref{defi:basiccomb} we
  have that:
\begin{enumerate}
\item \label{item:meet11} $\compz \Vdash a \wedge b \sqsubseteq a$
\item \label{item:meet12} $\compo \Vdash a \wedge b \sqsubseteq b$
\item \label{item:meet13} If $r \Vdash c \sqsubseteq a$ and $s \Vdash c
\sqsubseteq b$ then $\coma(r,s) \Vdash c \sqsubseteq a \wedge b$
\item \label{item:meet14} $\comk \comk \Vdash a \sqsubseteq \top$. 
\end{enumerate}

Hence, $(A,\wedge,\sqsubseteq)$ is a meet-semi-lattice.
\end{lemma}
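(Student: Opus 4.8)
The plan is to prove each of the four realizer claims separately, then invoke the three defining conditions of a meet semi-lattice. The key observation is that the combinators $\compz,\compo,\coma(r,s),\comk\comk$ lie in the filter $\Phi$ (this is part of Definition~\ref{defi:basiccomb} and the closure properties of $\Phi$), so to verify each claim $f\Vdash a\sqsubseteq b$ I only need to check the defining inequality $f\,(a\wedge b)\leq b$ (or the appropriate target), unfolding $a\wedge b=\comp ab$ and using the combinatorial inequalities collected in Lemma~\ref{lem:basicomb2}.

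For \eqref{item:meet11} and \eqref{item:meet12} I would argue directly: since Lemma~\ref{lem:basicomb2} gives $\compz(\comp ab)\leq a$ and $\compo(\comp ab)\leq b$, and $\comp ab$ is by definition $a\wedge b$, the witnessing inequalities $\compz\,(a\wedge b)\leq a$ and $\compo\,(a\wedge b)\leq b$ hold immediately, which is exactly what $\compz\Vdash a\wedge b\sqsubseteq a$ and $\compo\Vdash a\wedge b\sqsubseteq b$ mean. For \eqref{item:meet13}, suppose $r\Vdash c\sqsubseteq a$ and $s\Vdash c\sqsubseteq b$, i.e.\ $rc\leq a$ and $sc\leq b$. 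Lemma~\ref{lem:basicomb2} supplies precisely the implication $rc\leq a,\ sc\leq b\imp \coma(r,s)c\leq\comp ab$, so $\coma(r,s)c\leq a\wedge b$, giving $\coma(r,s)\Vdash c\sqsubseteq a\wedge b$. For \eqref{item:meet14}, I would use the $\oca$ axiom $\comk xy\leq x$ twice: evaluating $\comk\comk$ on any argument $a$ yields $\comk\comk a\leq\comk$, and this bounds $a\sqsubseteq\top$ provided $\top$ is chosen so that $\comk\leq\top$ (or simply $\top:=\comk$, consistent with the freedom in Definition~\ref{defi:squareord} that $\top\in\Phi$); the point is only that some fixed filter element realizes $a\sqsubseteq\top$ uniformly in $a$.

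Once these four realizers are in hand, the meet-semi-lattice structure follows formally: conditions \eqref{item:meet1} and \eqref{item:meet2} of the definition are witnessed by \eqref{item:meet11} and \eqref{item:meet12}; the universal property \eqref{item:meet3} is witnessed by \eqref{item:meet13}, since from realizers of $c\sqsubseteq a$ and $c\sqsubseteq b$ we produce a realizer of $c\sqsubseteq a\wedge b$; and \eqref{item:top} is witnessed by \eqref{item:meet14}. Because $\sqsubseteq_\Phi$ is a preorder (reflexivity via $\comi$ and transitivity via $\comb$, exactly as in Lemma~\ref{lem:aks-ord}), all the data of a meet semi-lattice on $(A,\wedge,\sqsubseteq)$ is present.

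The only genuine subtlety, and the step I would treat most carefully, is \eqref{item:meet14}: the element $\top$ is introduced in Definition~\ref{defi:squareord} merely as ``an element $\top\in\Phi$'' without a formula, so I must make sure the realizer claim holds for whatever choice is fixed. The clean way is to observe that $\comk\comk\,a\leq\comk$ by \rpaxk{} applied once, so taking $\top$ to be (isomorphic to) $\comk$—or any filter element above all $\comk\comk\,a$—makes $\comk\comk\Vdash a\sqsubseteq\top$ hold uniformly; everything else is a direct substitution into Lemma~\ref{lem:basicomb2} and carries no real obstacle.
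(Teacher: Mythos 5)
Your proof is correct and takes essentially the same route as the paper, whose entire proof is the one-liner that all the assertions follow directly from Lemma~\ref{lem:basicomb2}; your treatment of items (1)--(3), plus reflexivity via $\comi$ and transitivity via $\comb$, is exactly that unfolding spelled out. Your extra care on item (4) is warranted and consistent with the paper: Definition~\ref{defi:squareord} indeed leaves $\top$ unspecified, and your choice $\top=\comk$ (so that $\comk\comk a\leq\comk=\top$ by \rpaxk) is precisely the convention the paper adopts later in Lemma~\ref{lem:oca-islat}.
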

\begin{proof}
All the assertions follow directly from Lemma \ref{lem:basicomb2}.
\end{proof}
\[\ioca\]
\medskip
Next we show that in the case of the existence of an adjunctor, more
precise assertions can be proved concerning the meet and the order
$\sqsubseteq$.
\begin{theorem}\label{theo:meetadj} If $(\mathcal A,\Phi)$ is
  a $\ioca$ then:
\begin{enumerate}
\item If $a,b \in A$ then $a \sqsubseteq b$ if and only if there is an
  element $f \in \Phi$ such that $f \leq a \rightarrow b$.
\item If $a,b,c \in A$: \[ a \wedge b \sqsubseteq c \Leftrightarrow a
  \sqsubseteq (b \rightarrow c).\] In other words $(A, \sqsubseteq,
  \wedge, \rightarrow)$ is a Heyting preorder.
\end{enumerate}
\end{theorem}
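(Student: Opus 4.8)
My plan is to prove the two claims by exhibiting explicit realizers and invoking combinatory completeness (Theorem~\ref{theo:calculusinA}) to keep those realizers inside $\Phi$; the ``in other words'' then follows for free, since $(A,\sqsubseteq,\wedge)$ is already a meet-semi-lattice by Lemma~\ref{lem:meetoca}, so claim~(2) is exactly the Heyting-implication property~\eqref{eq:heyting-implication} read off for the order $\sqsubseteq$. For claim~(1), the forward direction is a one-line application of (PE): if $a\sqsubseteq b$ is witnessed by $g\in\Phi$ with $ga\leq b$, then (PE) gives $\come g\leq a\to b$, and $\come g\in\Phi$ since $\Phi$ is closed under application. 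Conversely, if $f\in\Phi$ satisfies $f\leq a\to b$, then monotonicity of application together with (PA) applied to the reflexive inequality $a\to b\leq a\to b$ yields $fa\leq(a\to b)a\leq b$, so $f$ itself realizes $a\sqsubseteq b$.

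For the ``uncurrying'' half of claim~(2), I would assume $a\sqsubseteq(b\to c)$ with realizer $g\in\Phi$, so $ga\leq b\to c$, and take $f=\comd(g)$, which lies in $\Phi$ because $\comd$ maps $\Phi$ into $\Phi$. Using the defining inequality $\comd(g)\ell\leq g(\compz\ell)(\compo\ell)$ of Lemma~\ref{lem:basicomb2} with $\ell=\comp a b=a\wedge b$, together with the projection inequalities $\compz(\comp a b)\leq a$ and $\compo(\comp a b)\leq b$, monotonicity of application gives
\[
\comd(g)(a\wedge b)\;\leq\; g(\compz(\comp a b))(\compo(\comp a b))\;\leq\;(ga)\,b\;\leq\;(b\to c)\,b\;\leq\;c,
\]
the last step being (PA) on $b\to c\leq b\to c$. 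Hence $f=\comd(g)$ realizes $a\wedge b\sqsubseteq c$.

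For the ``currying'' half, I would assume $a\wedge b\sqsubseteq c$ with realizer $f\in\Phi$, so $f(\comp a b)\leq c$, and reconstruct the two arguments before pairing them: set $h=\lambda^*x\,\lambda^*y\,(f(\comp x y))$, which is in $\Phi$ by combinatory completeness since its coefficients $f,\comp$ are in $\Phi$. Then $h\,a\,b\leq f(\comp a b)\leq c$, so $(ha)b\leq c$, and (PE) yields $\come(ha)\leq b\to c$. Setting $g=\lambda^*x\,(\come(hx))\in\Phi$, the $\lambda^*$-inequality gives $ga\leq\come(ha)\leq b\to c$, so $g$ realizes $a\sqsubseteq(b\to c)$. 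Combining the two halves with the meet-semi-lattice structure of Lemma~\ref{lem:meetoca} exhibits $(A,\sqsubseteq,\wedge,\to)$ as a Heyting preorder.

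The hard part is bookkeeping rather than conceptual: one must consistently distinguish the underlying order $\leq$ (in which (PK), (PS), (PA), (PE) and the combinator inequalities of Lemma~\ref{lem:basicomb2} are stated) from the coarser entailment preorder $\sqsubseteq$, and one must verify at each stage that the terms produced by $\lambda^*$-abstraction and by $\comd$, $\come$ and pairing genuinely have all their coefficients in $\Phi$ — which is precisely what the ``coefficients'' clause of Theorem~\ref{theo:calculusinA} and the closure properties recorded in Definition~\ref{defi:basiccomb} guarantee.
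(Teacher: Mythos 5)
Your proof is correct and takes essentially the same route as the paper's: part (1) via \rpaxe\ and \rpaxha, the direction $a\sqsubseteq(b\to c)\Rightarrow a\wedge b\sqsubseteq c$ via the realizer $\comd(g)$, and the converse by currying the realizer $f$ and applying $\come$. The only cosmetic difference is that you build the currying realizer by $\lambda^*$-abstraction, whereas the paper writes it explicitly as $\comb\come(\comb(\comb f)\comp)$ --- the same term up to compilation of the $\lambda^*$'s into $\comb$'s.
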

\begin{proof}
\begin{enumerate}
\item Assuming that $f \leq a \rightarrow b$ and using the half
  adjunction property we deduce that $fa \leq b$ i.e. that $a
  \sqsubseteq b$. In case that $a \sqsubseteq b$, first we deduce that
  $ga \leq b$ for some $g \in \Phi$. Using the adjunctor we deduce
  that $\ce g \leq a \rightarrow b$. 
\item To see that the map $\to$ gives a Heyting implication on
  $(A,\sqsubseteq)$, we have to check that
\[a \wedge b\sqsubseteq c \quad\equi\quad a\sqsubseteq (b\to c)\]
where $(a \wedge b) =\comp ab$.  

If the left inequality holds, there exists an element $f\in\Phi$ such
that $fa \leq b \to c$, and Definition \ref{defi:oca}, \rpaxha\ gives
$fab\leq c$.  In accordance with Lemma \ref{lem:basicomb2} there
exists a function $\comd:\Phi \rightarrow \Phi$ such that $\comd(f)
\ell\leq f (\compz \ell) (\compo \ell)$ for all $\ell \in A$, and this
gives (substituting $\ell$ by $\comp ab$)
\[\comd(f)(a \wedge b) = \comd(f)(\comp ab)\leq f ab \leq c.\] 

Conversely, assume that the right hand side holds, i.e.\ there
exists an $f\in\Phi$ such that $f
(\comp\,ab)\leq c$.  Then we can deduce
\[ \def\fCenter{\;\leq\;} \AX$f ( \comp\,ab)\fCenter c$ \UI$\comb
f(\comp\,a)\,b\fCenter c $ \UI$\comb (\comb f) \comp
\,ab\fCenter c$ \UI$\come (\comb (\comb f)
\comp \,a)\fCenter b\to c$ \UI$\comb \come
(\comb (\comb f) \comp) \,a\fCenter b \to c,$
\DP
\] hence $\comb \come (\comb (\comb f) \comp)$ is a realizer of
$a \sqsubseteq b\to c$.
\end{enumerate}
\end{proof}

For future use we prove the following property of the combinator
$\comc$ in the case that the $\ioca$ is equipped with one.
\begin{lemma} \label{lem:classical} Assume that the $\mathcal A$ is a
  $\ioca$ equipped with an element $\comc$ with the property that for
  if $a,b \in \mathcal A$ then, $\comc \leq ((a \to b) \to a) \to a$.
  If $a \in \mathcal A$, then $\comc \Vdash ((a \to \bot) \to \bot)
  \sqsubseteq a$.
\end{lemma}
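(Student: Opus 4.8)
The plan is to specialize the defining inequality for $\comc$ at $b=\bot$ and then rewrite Peirce's law as double-negation elimination using the variance of the implication. First I would instantiate the hypothesis on $\comc$ with its second argument equal to $\bot$, giving
\[
\comc \;\leq\; ((a\to\bot)\to a)\to a .
\]

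Next I would bring in that $\bot$ is the least element of the $\operatorname{inf}$-complete poset $A$, so that $\bot\leq a$. Because $\to$ is monotone in its second argument, this yields $(a\to\bot)\to\bot \;\leq\; (a\to\bot)\to a$; and because $\to$ is antitone in its first argument, applying $(-)\to a$ reverses the direction and gives
\[
((a\to\bot)\to a)\to a \;\leq\; ((a\to\bot)\to\bot)\to a .
\]
Composing the two displayed inequalities produces $\comc \leq ((a\to\bot)\to\bot)\to a$.

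Finally I would turn this order relation into the desired realizability statement. Since $\comc\in\Phi$ (being part of the data of a $\koca$), the half-adjunction property \rpaxha\ of Definition~\ref{defi:ioca}, applied to $\comc \leq ((a\to\bot)\to\bot)\to a$, yields $\comc\,((a\to\bot)\to\bot)\leq a$, which is exactly $\comc \Vdash ((a\to\bot)\to\bot)\sqsubseteq a$. (Alternatively, Theorem~\ref{theo:meetadj}(1) reads the relation $\sqsubseteq$ off an inequality of the shape $f\leq x\to y$ directly.)

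The argument is short, and the only delicate point is bookkeeping: one must keep track that $\to$ is monotone in the target and antitone in the source, so that the two rewriting steps move the inequalities in the correct directions. Conceptually the content is just the familiar classical passage from Peirce's law to double-negation elimination, made available here by the choice $b=\bot$ together with ex falso in the form $\bot\leq a$.
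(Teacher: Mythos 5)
Your proof is correct and takes essentially the same route as the paper's: both specialize Peirce's law at $b=\bot$, use $\bot\leq a$ together with the variance of $\to$ to replace $(a\to\bot)\to a$ by $(a\to\bot)\to\bot$, and invoke the half-adjunction \rpaxha\ to obtain $\comc\,((a\to\bot)\to\bot)\leq a$. The only (immaterial) difference is the order of steps: the paper applies \rpaxha\ first and then uses monotonicity of application, whereas you stay at the level of implications (using antitonicity of $\to$ in its first argument) and apply \rpaxha\ at the very end.
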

\begin{proof} For $a \in \mathcal A$ we have:
  \begin{prooftree} 
\Axiom$\comc \leq  ((a \to \bot)\fCenter\ \to a)\to \fCenter\ a$
\UnaryInf$\comc((a \to \bot)\fCenter\ \to a)\leq  a$
\UnaryInf$\comc((a \to \bot) \to \bot) \leq \comc((\fCenter\ a \to
  \bot) \to a) \leq  a$
\UnaryInf$\comc \Vdash ((a \to \bot) \fCenter\ \to \bot)
  \sqsubseteq a$  
\end{prooftree}
\end{proof} 
\subsection{Indexed preorders and indexed
  meet-semi-lattices}\label{sec-indexed-posets} 

\begin{definition} \begin{enumerate}
\item An \emph{indexed preorder} is a functor
$\fifd:\setord.$ 
\item An \emph{indexed meet-semi-lattice} is a functor
$\fifa:\catset^\op\to\catslat$.
\item An \emph{indexed Heyting preorder} is a functor
$\trip:\catset^\op\to\cathpo$.
\end{enumerate}
\end{definition}
We only present the following definitions in the case of preorders,
in the case of indexed meet-semilattices the concepts are similar. 
\begin{remarks}
\begin{enumerate}
\item Indexed preorders (in particular triposes, defined below) can be
used as categorical models of predicate logic. With this in mind, we
often call their elements \emph{predicates} -- more precisely, if
$\fifd$ is an indexed preorder, $I$ is a set, and
$\varphi\in\fifd(I)$, we say that $\varphi$ is a \emph{predicate on
$I$}.
\item If $\fifd$ is an indexed preorder and $f:J\to I$ is a function,
applying the functor to $f$ gives us a monotone map
$\fifd(f):\fifd(I)\to\fifd(J)$. We call this function \emph{reindexing
along $f$}, and usually abbreviate it by $f^*$. Thus, if $\varphi$ is
a predicate on $I$, then \emph{its reindexing $f^*(\varphi)$ along
$f$} is a predicate on $J$. Semantically, reindexing corresponds to
\emph{substitution and context extension}.
\item{There are more general concepts \hyphenation{pseu-do-func-tor}
    of indexed preorder, one is that of a \emph{pseudofunctor} of type
    $\setord$.} Another generalization of indexed preorders is to
  replace $\catset$ by another category. We do not need these levels of
  generality.
\item Preorders are a special case of \emph{indexed categories},
  which are functors $\fibc:\catset^\op\to\catcat$. The link between
  indexed categories and logic was discovered by Lawvere in the
  60ies~\cite{lawvere1969adjointness,lawvere1970equality}
  (`quantifiers as adjoints'), and is at the heart of 
  \emph{categorical logic}.
\end{enumerate}
\end{remarks}
\begin{definition} Given indexed preorders $\fifd,\fife:\setord$, an
\emph{indexed monotone map} $\sigma:\fifd\to\fife$ is a family
\[ \sigma_I:\fifd(I)\to \fife(I) \qquad (I\in\catset)
\] of monotone functions, such that we have
\begin{equation}\label{eq:pseudo} \sigma_J(f^*(\varphi))\cong
f^*(\sigma_I(\varphi))
\end{equation} for all functions $f:J\to I$ and predicates
$\varphi\in\fifd(I)$.
\end{definition}
\begin{remarks}
\begin{enumerate}
\item Indexed monotone maps are special cases of \emph{pseudo-natural
transformations}~\cite{lack20102}. If we have equality
in ~\eqref{eq:pseudo}, we speak of a \emph{strict} indexed monotone
map, which is an instance of a \emph{2-natural transformation}.
\item Indexed preorders and indexed monotone maps form a category,
which we denote by $\catiord$. Composition of indexed monotone maps
$\fifc\xrightarrow{\sigma}\fifd\xrightarrow{\tau}\fife$ is defined by
$(\tau\circ\sigma)_I(\varphi)=\tau_I(\sigma_I(\varphi))$ for
$\varphi\in \fifc(I)$. The identity $\id_\fifd$ of an indexed preorder
$\fifd$ is defined by $\id_{\fifd,I}(\varphi)=\varphi$ for all
$\varphi\in\fifd(I)$.
\end{enumerate}
\end{remarks}
\begin{definition} Let $\fifd,\fife$ be indexed preorders.
\begin{enumerate}
\item For indexed monotone maps $\sigma,\tau:\fifd\to\fife$, we define
\[ \sigma\leq\tau\defequi \forall I\in \catset\qdot
\sigma_I\leq\tau_I.
\] We say that $\sigma$ and $\tau$ are \emph{isomorphic}, and write
$\sigma\cong \tau$, if $\sigma\leq \tau$ and $\tau\leq \sigma$.
\item An indexed monotone map $\sigma :\fifd\to\fife$ is called an
\emph{equivalence}, if there exists a indexed monotone map $\tau
:\fife\to\fifd$ such that $\tau \circ \sigma\cong \id_\fifd$, and $\sigma
\circ \tau\cong\id_\fife$. In this case, $\tau $ is called an
\emph{(indexed) weak inverse} of $\sigma $.
\item We say that $\fifd$ and $\fife$ are \emph{equivalent}, and write
$\fifd\simeq\fife$, if there exists an equivalence $\sigma
:\fifd\to\fife$.
\end{enumerate}
\end{definition}
\begin{lemma}\label{lem:equiv-iord} An indexed monotone map
$\sigma:\fifd\to\fife$ is an equivalence, if and only if for every set
$I$, the monotone map $\sigma_I:\fifd(I)\to\fife(I)$ is order
reflecting and essentially surjective.
\end{lemma}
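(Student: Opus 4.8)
The plan is to reduce both implications to the pointwise characterisation of equivalences of preorders in Remark~\ref{remark:equiv-ord}\eqref{item:equiv-ord}, which states that a monotone map of preorders is an equivalence precisely when it is order reflecting and essentially surjective.

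For the forward implication, suppose $\sigma$ is an equivalence with indexed weak inverse $\tau:\fife\to\fifd$. I would fix a set $I$ and read off the component at $I$ of the two defining isomorphisms: since composition of indexed monotone maps is computed pointwise, $\tau\circ\sigma\cong\id_\fifd$ gives $\tau_I\circ\sigma_I\cong\id_{\fifd(I)}$, and $\sigma\circ\tau\cong\id_\fife$ gives $\sigma_I\circ\tau_I\cong\id_{\fife(I)}$. Hence $\tau_I$ is a weak inverse of $\sigma_I$ in $\catord$, so $\sigma_I$ is an equivalence of preorders and, by Remark~\ref{remark:equiv-ord}\eqref{item:equiv-ord}, order reflecting and essentially surjective.

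For the converse, assume each $\sigma_I$ is order reflecting and essentially surjective, hence (again by Remark~\ref{remark:equiv-ord}\eqref{item:equiv-ord}) an equivalence of preorders. I would choose, for every set $I$, a monotone weak inverse $\tau_I:\fife(I)\to\fifd(I)$ with $\tau_I\circ\sigma_I\cong\id_{\fifd(I)}$ and $\sigma_I\circ\tau_I\cong\id_{\fife(I)}$, and then argue that the family $\tau=(\tau_I)$ is an indexed monotone map; the two isomorphisms above then witness that $\tau$ is an indexed weak inverse of $\sigma$, making $\sigma$ an equivalence. The only nontrivial point, and the main obstacle, is verifying the naturality-up-to-isomorphism condition \eqref{eq:pseudo} for $\tau$, i.e.\ that $\tau_J(f^*(\psi))\cong f^*(\tau_I(\psi))$ for every $f:J\to I$ and $\psi\in\fife(I)$.

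The trick I would use is to apply $\sigma_J$ and exploit that $\sigma_J$, being order reflecting, reflects isomorphisms: from $\sigma_J(a)\le\sigma_J(b)$ one obtains $a\le b$, so $\sigma_J(a)\cong\sigma_J(b)$ forces $a\cong b$. It therefore suffices to prove $\sigma_J(\tau_J(f^*(\psi)))\cong\sigma_J(f^*(\tau_I(\psi)))$. The left-hand side is $\cong f^*(\psi)$ because $\sigma_J\circ\tau_J\cong\id_{\fife(J)}$. For the right-hand side, the naturality-up-to-isomorphism of $\sigma$, namely \eqref{eq:pseudo} applied to the predicate $\tau_I(\psi)\in\fifd(I)$, gives $\sigma_J(f^*(\tau_I(\psi)))\cong f^*(\sigma_I(\tau_I(\psi)))$, and since $\sigma_I\circ\tau_I\cong\id_{\fife(I)}$ and reindexing $f^*$ is monotone (hence preserves isomorphisms), this is $\cong f^*(\psi)$ as well. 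Thus both sides are isomorphic to $f^*(\psi)$, and order-reflectingness of $\sigma_J$ yields the desired $\tau_J(f^*(\psi))\cong f^*(\tau_I(\psi))$. I would note in passing that the choice of the $\tau_I$ ranges over the proper class of all sets, so this step implicitly uses a global form of choice, which is standard in this setting.
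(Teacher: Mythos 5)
Your proof is correct and follows essentially the same route as the paper: Remark~\ref{remark:equiv-ord}\eqref{item:equiv-ord} supplies pointwise weak inverses $\tau_I$, which are then assembled into an indexed weak inverse of $\sigma$. You actually go further than the paper's one-line proof, which merely asserts that the $\tau_I$ ``give rise to'' an indexed weak inverse; your order-reflection argument verifying the coherence condition~\eqref{eq:pseudo} for the family $\tau$ supplies precisely the detail the paper leaves implicit.
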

\begin{proof} By Remark~\ref{remark:equiv-ord},~\eqref{item:equiv-ord}, every $\sigma_I$ has a
weak inverse $\tau_I:\fife(I)\to\fifd(I)$. Together these $\tau_I$
give rise to an indexed weak inverse of $\sigma$.
\end{proof}
\subsection{Triposes}\label{sec-triposes}
Next we consider a special kind of indexed Heyting preorders, called
\emph{triposes}, see \cite{kn:tripos}.
\begin{definition}\label{def:tripos} 
A \emph{tripos} is a
functor $ \trip:\setopto\cathpo $ such that
\begin{enumerate}
  \item For every function $f:J\to I$, the reindexing map
$f^*:\trip(I)\to\trip(J)$ has a right adjoint
$\forall_f:\trip(J)\to\trip(I)$.
\item If
\begin{equation}\label{eq:pullback} \vcenter{\xymatrix{
P\pullbackcorner \ar[r]^q \ar[d]_p & K \ar[d]^g \\ J \ar[r]_f & I }}
\end{equation} is a pullback square of sets and functions, then
$\forall_q(p^*(\varphi))\cong g^*(\forall_f(\varphi))$ for all
$\varphi\in\trip(J)$ (this is the \emph{Beck-Chevalley condition}).
\item\label{def:tripos_chi} $\trip$ has a \emph{generic predicate},
i.e.\ there exists a set $\prop$, and a $\triptr\in\trip(\prop)$ such
that for every set $I$ and $\varphi\in\trip(I)$ there exists a (not
necessarily unique) function $\chi_\varphi:I\to\prop$ with
$\varphi\cong\chi_\varphi^*(\triptr)$.  

\end{enumerate}
\end{definition}
\begin{remark}
\begin{enumerate}
\item 
$\forall_f:\trip(J)\to\trip(I)$ is not required
to preserve meets or implication.
\item 
The statement that the above
square is a pullback, means explicitly that
\[\forall j\in J,\, k\in K\qdot f(j)=g(k)\equi \Big(\exists !x\in P\qdot
p(x)=j\wedge q(x)= k\Big).\]
\end{enumerate}
\end{remark}
\begin{lemma}\label{lem:equiv-iord-trip} Let $\fifd$ and $\trip$ be
indexed preorders, and assume that $\sigma:\fifd\to\trip$ and
$\tau:\trip\to\fifd$ form an equivalence. If $\trip$ is a tripos, then
so is $\fifd$.
\end{lemma}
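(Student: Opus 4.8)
The plan is to transport all three defining conditions of a tripos along the equivalence $\sigma\dashv\tau$, exploiting the fact that $\sigma_I$ and $\tau_I$ are mutually weak-inverse monotone maps on each fibre. The key computational handle is that for an equivalence, $\sigma_I$ is both order-reflecting and essentially surjective (Remark~\ref{remark:equiv-ord},~\eqref{item:equiv-ord}), so $\sigma_I$ and $\tau_I$ preserve and reflect the order up to isomorphism, and $\sigma_I\tau_I\cong\id$, $\tau_I\sigma_I\cong\id$. Since being a tripos is a property of an indexed preorder that only refers to adjoints, the Beck--Chevalley isomorphisms, and the existence of a generic predicate, each piece of structure on $\trip$ can be pulled back to $\fifd$ through $\tau$ and $\sigma$.

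First I would construct the right adjoints. Given $f:J\to I$, the reindexing map $f^*:\trip(I)\to\trip(J)$ has a right adjoint $\forall_f$. I claim that the composite $\fifd(I)\xrightarrow{\sigma_I}\trip(I)\xrightarrow{\forall_f}\trip(J)\xrightarrow{\tau_J}\fifd(J)$, or rather its counterpart for the reindexing map of $\fifd$, furnishes a right adjoint to $f^*$ in $\fifd$. Concretely, define $\forall_f^\fifd:=\tau_I\circ\forall_f^\trip\circ\sigma_J$ and verify the adjunction $f^*_\fifd\dashv\forall_f^\fifd$ using the chain of bi-implications
\[
f^*_\fifd(\psi)\leq\varphi
\quad\equi\quad
\sigma_J(f^*_\fifd(\psi))\leq\sigma_J(\varphi)
\quad\equi\quad
f^*_\trip(\sigma_I(\psi))\leq\sigma_J(\varphi),
\]
where the first step uses order-reflection and the second uses that $\sigma$ is an indexed monotone map (so $\sigma_J f^*_\fifd\cong f^*_\trip\sigma_I$ via~\eqref{eq:pseudo}). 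Continuing, the right-hand side is equivalent to $\sigma_I(\psi)\leq\forall_f^\trip(\sigma_J(\varphi))$ by the adjunction in $\trip$, and then to $\psi\leq\tau_I\forall_f^\trip\sigma_J(\varphi)=\forall_f^\fifd(\varphi)$ by applying $\tau_I$ and using $\tau_I\sigma_I\cong\id$. This establishes condition~(1).

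For the Beck--Chevalley condition~(2), I would take a pullback square of sets and translate the isomorphism $\forall_q^\trip(p^*_\trip\psi)\cong g^*_\trip(\forall_f^\trip\psi)$ holding in $\trip$ into the corresponding statement in $\fifd$, again inserting $\sigma$'s and $\tau$'s and using the naturality isomorphisms of $\sigma,\tau$ as pseudo-natural transformations together with $\tau\sigma\cong\id$ and $\sigma\tau\cong\id$. This is routine diagram-chasing once the adjoints are in place. For the generic predicate~(3), I would keep the same indexing set $\prop$ and take $\triptr_\fifd:=\tau_\prop(\triptr_\trip)\in\fifd(\prop)$; given $\varphi\in\fifd(I)$, apply $\sigma_I$ to get $\sigma_I(\varphi)\in\trip(I)$, obtain a classifying map $\chi:I\to\prop$ with $\sigma_I(\varphi)\cong\chi^*_\trip(\triptr_\trip)$, and then check that the same $\chi$ classifies $\varphi$ over $\triptr_\fifd$ by applying $\tau_I$ and using essential surjectivity. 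The main obstacle is purely bookkeeping: one must be careful that the various isomorphisms~\eqref{eq:pseudo} for $\sigma$ and for its weak inverse $\tau$ compose correctly, since $\tau$ is only a weak inverse and all equalities hold merely up to $\cong$; establishing that $\tau$ is itself an indexed monotone map (so that it has its own naturality isomorphisms) is the one preliminary fact worth isolating before the rest goes through mechanically.
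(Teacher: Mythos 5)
Your transport strategy is the right one and, where it applies, coincides with the paper's own proof: the right adjoint $\forall_f^\fifd=\tau_I\circ\forall_f^\trip\circ\sigma_J$, the Beck--Chevalley bookkeeping, and the generic predicate $\tau_\prop(\triptr)$ are exactly the paper's formulas, and your adjunction chain (order-reflection, pseudo-naturality~\eqref{eq:pseudo}, the adjunction in $\trip$, then $\tau_I\sigma_I\cong\id$) is sound. However, there is a genuine gap in your opening premise that ``being a tripos is a property of an indexed preorder that only refers to adjoints, the Beck--Chevalley isomorphisms, and the existence of a generic predicate.'' Under Definition~\ref{def:tripos}, a tripos is a functor into $\cathpo$, not merely into $\catord$: each fibre must be a Heyting preorder (top element, binary meets, and Heyting implication satisfying~\eqref{eq:heyting-implication}), and each reindexing map must preserve this structure up to isomorphism. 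Since the lemma assumes only that $\fifd$ is an indexed preorder, this fibrewise Heyting structure on $\fifd$ has to be \emph{constructed}, and your proof never does so; this is in fact what most of the paper's own proof is devoted to.

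The omission is repairable by the same conjugation trick you use everywhere else: set $\top_{\fifd(I)}:=\tau_I(\top)$, $\varphi\wedge\psi:=\tau_I(\sigma_I(\varphi)\wedge\sigma_I(\psi))$, and $\varphi\to\psi:=\tau_I(\sigma_I(\varphi)\to\sigma_I(\psi))$. The universal properties transfer by order-reflection together with $\tau_I\sigma_I\cong\id$ and $\sigma_I\tau_I\cong\id$; for instance, $\theta\leq\varphi\to\psi$ iff $\sigma_I(\theta)\leq\sigma_I(\varphi)\to\sigma_I(\psi)$ iff $\sigma_I(\theta)\wedge\sigma_I(\varphi)\leq\sigma_I(\psi)$ iff $\theta\wedge\varphi\leq\psi$, and similarly for meets and $\top$. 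Compatibility of this structure with reindexing follows from the pseudo-naturality isomorphisms of $\sigma$ and $\tau$. (A small additional remark: your ``preliminary fact worth isolating,'' that $\tau$ is an indexed monotone map with its own naturality isomorphisms, is not something you need to prove --- it is part of the hypothesis, since the paper's definition of an equivalence of indexed preorders requires the weak inverse $\tau$ to be an indexed monotone map.) With the Heyting structure added, the rest of your argument goes through as written and the two proofs agree.
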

\begin{proof} This is because all the defining properties of a tripos
are stable under equivalence, and can be transported along $\sigma$
and $\tau$. In particular:
\begin{enumerate}
\item for any set $I$, $\tau_I(\top)$ is a greatest element in
$\fifd(I)$
\item meets in $\fifd(I)$ are given by
$\varphi\wedge\psi=\tau_I(\sigma_I(\varphi)\wedge\sigma_I(\psi))$
\item Heyting implication in $\fifd(I)$ is given by
$\varphi\to\psi\;=\;\tau_I(\sigma_I(\varphi)\to\sigma_I(\psi))$
\item universal quantification in $\fifd$ is can be defined by
$\forall_f(\varphi)=\tau_I(\forall_f(\sigma_J(\varphi)))$ for $f:J\to
I$ and $\varphi\in\fifd(J)$
\item a generic predicate for $\fifd$ is given by
$\tau_\prop(\triptr)$ where $\triptr\in\trip(\prop)$ is the generic
predicate of $\trip$
\end{enumerate}
\end{proof}
\section{Constructing triposes from ordered structures}
In this section we show how to construct triposes --or weaker structures
such as indexed meet-semilattices or indexed preorders-- from ordered
combinatory algebras or abstract Krivine structures. We also consider
the relations between the different constructions.
\subsection{From $\mathcal{AKS}$s to indexed preorders}

\begin{definition} Let $\mathcal K=(\Lambda,\Pi,\cdots)$ be an
  abstract Krivine structure, and let $I$ be any set. The
  \emph{entailment relation} $\ent$ in $\power(\Pi)^I$ is defined by
\begin{equation}\label{eq:entailment} \varphi,\psi \in
  \power(\Pi)^I\,,\quad \varphi\ent \psi\qdefequi
  \exists t\in \operatorname{QP}\;\forall i\in I\qdot t\orth \varphi(i)\imp\psi(i)
\end{equation} for $\varphi,\psi:I\to \power(\Pi)$. An element $t
\in \Phi$ as above is said to be ``a realizer of the
entailment $\varphi\ent\psi$''.
\end{definition}
\begin{remark}
  Notice that the entailment relation above, could have been defined
  using the arrow $\Rightarrow_{\perp}$, because $t\orth
  \varphi(i)\imp\psi(i)$ if and only if $t\orth
  \varphi(i)\imp_{\perp}{}\psi(i)$, compare with Remark
  \ref{remark:equal}.
\end{remark}
\begin{lemma}\label{lem:aks-iord} Let $\mathcal K$ be an abstract
Krivine structure.
\begin{enumerate}
\item \label{lem:aks-iord-iord} For any set $I$, the entailment
relation $\ent$ is a preorder on $\power(\Pi)^I$.
\item \label{lem:aks-iord-monot} For any function $f:J\to I$,
precomposition defines a monotone map
\[f^*:(\power(\Pi)^I,\ent)\to (\power(\Pi)^J,\ent),\qquad
\varphi\mapsto \varphi\circ f.\]
\item \label{lem:aks-iord-func} The preceding constructions give an
indexed preorder
\[ \trip(\mathcal K):\setord,\qquad I\mapsto
(\power(\Pi)^I,\ent),\qquad f\mapsto f^*.
\]
\end{enumerate}
\end{lemma}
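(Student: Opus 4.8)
The plan is to transpose the single--variable argument of Lemma~\ref{lem:aks-ord} to families indexed by $I$. The only genuinely new point is that a realizer of an entailment $\varphi\ent\psi$ is \emph{one} quasi--proof $t\in\operatorname{QP}$ that witnesses $t\orth\varphi(i)\imp\psi(i)$ for \emph{all} $i\in I$ simultaneously; since the combinators furnished by an $\mathcal{AKS}$ are fixed elements of $\operatorname{QP}$ that do not depend on the particular truth values $\varphi(i),\psi(i)\subs\Pi$, the realizers used in the non--indexed case continue to work uniformly. I would establish the three clauses in turn.

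For~\eqref{lem:aks-iord-iord} I must check that $\ent$ is reflexive and transitive. Reflexivity $\varphi\ent\varphi$ is witnessed by the single combinator $\cI$: the computation of Lemma~\ref{lem:aks-ord} shows $\cI\orth\varphi(i)\imp\varphi(i)$ for each $i$ (equivalently $\cI\orth\varphi(i)\imp_\perp\varphi(i)$, cf.\ Remark~\ref{remark:equal}), and as $\cI$ is a fixed quasi--proof it realizes the entailment uniformly. For transitivity, given a uniform realizer $t$ of $\varphi\ent\psi$ and a uniform realizer $u$ of $\psi\ent\chi$, the combinator $\cB$ composes $t$ and $u$ into a single quasi--proof that realizes $\varphi(i)\imp\chi(i)$ for every $i$ at once, exactly as in the proof of Lemma~\ref{lem:aks-ord}; this quasi--proof realizes $\varphi\ent\chi$.

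For~\eqref{lem:aks-iord-monot} the essential observation is that precomposition preserves uniform realizability. If $t\in\operatorname{QP}$ realizes $\varphi\ent\psi$, so that $t\orth\varphi(i)\imp\psi(i)$ for all $i\in I$, then for every $j\in J$ the index $f(j)$ lies in $I$, whence $t\orth\varphi(f(j))\imp\psi(f(j))$. Thus the very same $t$ realizes $f^{*}(\varphi)=\varphi\circ f\ent\psi\circ f=f^{*}(\psi)$, so $f^{*}$ is monotone (it is plainly a well--defined map $\power(\Pi)^I\to\power(\Pi)^J$). For~\eqref{lem:aks-iord-func} it remains to verify the functor laws, and these hold strictly, not merely up to isomorphism: $\id_I^{*}(\varphi)=\varphi\circ\id_I=\varphi$, and for $g\colon K\to J$, $f\colon J\to I$ one has $(f\circ g)^{*}(\varphi)=\varphi\circ(f\circ g)=(\varphi\circ f)\circ g=g^{*}(f^{*}(\varphi))$. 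Together with~\eqref{lem:aks-iord-iord} and~\eqref{lem:aks-iord-monot}, this exhibits $\trip(\mathcal K)$ as a functor $\setord$.

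I do not anticipate any real obstacle: the whole content is the transfer of the combinatory bookkeeping of Lemma~\ref{lem:aks-ord} from single truth values to families. The one subtlety worth stating explicitly is the \emph{uniformity} of witnesses --- that a single quasi--proof must work for all $i\in I$ --- but this is automatic, because $\cI$ and $\cB$ are fixed elements of $\operatorname{QP}$ whose orthogonality properties (Lemma~\ref{lem:aks}, \eqref{lem:aks-i-orth} and~\eqref{lem:ask-borth}) hold for arbitrary stacks and terms, hence across the entire family at once.
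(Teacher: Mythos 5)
Your proposal is correct and follows essentially the same route as the paper: the paper likewise proves (1) by transposing Lemma~\ref{lem:aks-ord} (reflexivity via $\cI$, transitivity via $\cB$), proves (2) by noting that a single realizer $t$ of $\varphi\ent\psi$ is automatically a realizer of $\varphi\circ f\ent\psi\circ f$, and proves (3) by the strict unit and associativity laws of composition of functions. Your explicit discussion of the uniformity of the witnesses --- one quasi--proof working for all $i\in I$ at once --- is the only elaboration beyond the paper's terser text, and it is accurate.
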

\begin{proof} \ad{\ref{lem:oca-islat-slat}} This is proved in the same
  way as Lemma \ref{lem:aks-ord}. 

\medskip

\ad{\ref{lem:oca-islat-monot}} Let $\varphi,\psi:I\to
\power(\Pi)$. If $t\in\operatorname{\Phi}$ is a realizer of $\varphi\ent\psi$,
then it is also a realizer of $\varphi\circ f\ent \psi\circ f$, thus
$f^*$ is monotone.

\medskip \ad{\ref{lem:oca-islat-func}} We check the
functoriality condition, i.e.\ $g^*\circ f^*=(f\circ g)^*$ and
$\id_I^*=\id_{A^I}$ for $K\xrightarrow{g}J\xrightarrow{f}I$. This
follows from associativity and unit laws for composition.
\end{proof}

\begin{definition}\label{def:aks-iord2} The indexed preorder
$\trip_\bot(\mathcal K):\setord$ is defined by
\[ \trip_\bot(\mathcal K)(I) = (\pobot(\Pi)^I,\ent),\qquad f\mapsto f^*
\] where the order on $\pobot(\Pi)^I$ is the restriction of the
entailment order on $\power(\Pi)^I$ to predicates with values in
$\pobot(\Pi)$.
\end{definition}
\begin{lemma} The canonical inclusion $\trip_\bot(\mathcal
  K)\eincl\trip(\mathcal K)$ is an equivalence of indexed preorders.
\end{lemma}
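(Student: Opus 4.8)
The plan is to apply Lemma~\ref{lem:equiv-iord}, which reduces the claim to showing that for each set $I$ the component inclusion $(\pobot(\Pi)^I,\ent)\eincl(\power(\Pi)^I,\ent)$ is order reflecting and essentially surjective. Order reflection is immediate: by Definition~\ref{def:aks-iord2} the entailment order on $\pobot(\Pi)^I$ is by construction the restriction of the entailment order on $\power(\Pi)^I$, so two biorthogonally closed predicates that become comparable in $\trip(\mathcal K)(I)$ are already comparable in $\trip_\bot(\mathcal K)(I)$. Everything therefore rests on essential surjectivity.

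For essential surjectivity I would, given an arbitrary predicate $\varphi\in\power(\Pi)^I$, pass to its \emph{pointwise biorthogonal closure} $\varphi^{\perp\perp}\in\pobot(\Pi)^I$ defined by $\varphi^{\perp\perp}(i)=({}^\perp\varphi(i))^\perp$. Each value lies in $\pobot(\Pi)$ by Remark~\ref{remark:initialrl}, so $\varphi^{\perp\perp}$ is a legitimate object of $\trip_\bot(\mathcal K)(I)$, and it remains to exhibit realizers for the two entailments $\varphi\ent\varphi^{\perp\perp}$ and $\varphi^{\perp\perp}\ent\varphi$. This is precisely the pointwise version of Lemma~\ref{lem:eqpo}. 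For every $i$ one has ${}^\perp(({}^\perp\varphi(i))^\perp)={}^\perp\varphi(i)$ by Remark~\ref{remark:initialrl}, so the implicational subsets unwind to $\varphi(i)\imp\varphi^{\perp\perp}(i)={}^\perp\varphi(i)\cdot({}^\perp\varphi(i))^\perp$ and $\varphi^{\perp\perp}(i)\imp\varphi(i)={}^\perp\varphi(i)\cdot\varphi(i)$. By Lemma~\ref{lem:aks},~\eqref{lem:aks-i-real} the combinator $\cI$ satisfies $\cI\perp P\imp_\perp P$ for \emph{every} $P$ (equivalently $\cI\orth{}^\perp P\cdot P$, using Remark~\ref{remark:equal}); applying this with $P=\varphi^{\perp\perp}(i)$ and with $P=\varphi(i)$ respectively yields $\cI\orth\varphi(i)\imp\varphi^{\perp\perp}(i)$ and $\cI\orth\varphi^{\perp\perp}(i)\imp\varphi(i)$ for each $i$.

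The crucial observation -- and the only point where the \emph{indexed} nature of the statement plays any role -- is that $\cI$ is a single fixed quasi-proof, independent both of the index $i$ and of the subsets involved. Hence the same $\cI\in\operatorname{QP}$ simultaneously witnesses all the pointwise orthogonalities, which is exactly what the definition of the entailment relation $\ent$ requires: one $t\in\operatorname{QP}$ with $t\orth\varphi(i)\imp\varphi^{\perp\perp}(i)$ for all $i\in I$, and symmetrically for the converse. This gives $\varphi\cong\varphi^{\perp\perp}$ in $\trip(\mathcal K)(I)$, so the inclusion is essentially surjective, and by Lemma~\ref{lem:equiv-iord} the inclusion $\trip_\bot(\mathcal K)\eincl\trip(\mathcal K)$ is an equivalence of indexed preorders.

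I do not expect a genuine obstacle here, since the computation at each index is already carried out in Lemma~\ref{lem:eqpo}. The one thing to be careful about is not to produce an $i$-dependent realizer: it is the uniformity of $\cI$ (its independence of $P$) that lifts the fibrewise equivalence of Lemma~\ref{lem:eqpo} to an equivalence of \emph{indexed} preorders, rather than merely an equivalence of each fibre separately.
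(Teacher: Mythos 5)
Your proof is correct and takes essentially the same route as the paper's: reduce via Lemma~\ref{lem:equiv-iord} to fibrewise order reflection and essential surjectivity, and settle each fibre by the pointwise biorthogonal closure argument of Lemma~\ref{lem:eqpo}, with the single combinator $\cI$ serving as the uniform ($i$-independent) realizer of both entailments. The paper's proof simply states that the fibrewise claim ``is proved in the same way as Lemma~\ref{lem:eqpo}''; you have spelled out exactly those details, including the uniformity observation that makes the fibrewise argument lift to the indexed setting.
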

\begin{proof} By Lemma~\ref{lem:equiv-iord} it is suffices to show
that the inclusion
\[(\pobot(\Pi)^I,\ent)\eincl(\power(\Pi)^I,\ent)\] is an
equivalence for all sets $I$ and this is proved in the same way as Lemma
\ref{lem:eqpo}.  
\end{proof}
\subsection{From $\oca$s to indexed meet-semilattices}

\begin{definition}\label{def:oca-to-islat} Let $(\mathcal A,\Phi)$ 
  be a filtered $\oca$. The
  \emph{entailment relation} $\ent~ \subs A^I\times A^I$ is defined by
\begin{equation}\label{eq:entailment} \varphi\ent \psi\qdefequi
\exists r\in \Phi\;\forall i\in I\qdot r(\varphi(i))\leq\psi(i)
\end{equation} for $\varphi,\psi:I\to A$. An $r \in \Phi$ as above is
said to be ``a realizer of the entailment
$\varphi\ent\psi$''.
\end{definition}
\begin{lemma}\label{lem:oca-islat} Let $(\mathcal A,\Phi)$ be
  a filtered $\oca$.
\begin{enumerate}
\item \label{lem:oca-islat-slat} For any set $I$, the entailment
relation $\ent$ is a preorder on $A^I$, and $(A^I,\ent)$ is a
meet-semi-lattice with the following definitions:
$\top:I\to A$; $\top(i)=\top=\comk$ and $\varphi\wedge\psi$ of two
functions $\varphi,\psi:I\to A$ is $(\varphi\wedge\psi)(i)= \varphi (i) \wedge \psi (i)$.
\item \label{lem:oca-islat-monot} For any function $f:J\to I$,
precomposition with $f$ defines a meet preserving monotone map
\[f^*:(A^I,\ent)\to (A^J,\ent),\qquad \varphi\mapsto \varphi\circ f.\]
\item \label{lem:oca-islat-func} The preceding constructions define an
indexed meet-semi-lattice
\[ \trip(\mathcal A):\catset^\op\to\catslat,\qquad I\mapsto
(A^I,\ent),\qquad f\mapsto f^*.
\]
\end{enumerate}
\end{lemma}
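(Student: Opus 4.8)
The plan is to view Lemma~\ref{lem:oca-islat} as the fiberwise lifting to $A^I$ of the pointwise structure on $A$ already set up in Lemmas~\ref{lem:meetoca} and~\ref{lem:basicomb2}: the entailment $\ent$ on $A^I$ is precisely the relation $\sqsubseteq_\Phi$ of Definition~\ref{defi:squareord} required to hold at every index $i\in I$ \emph{by one and the same realizer}. The observation that drives the whole proof is that the realizers supplied by Lemma~\ref{lem:meetoca} are \emph{uniform}: the terms $\compz$, $\compo$ and $\comk\comk$ are closed combinators not depending on the elements compared, and $\coma(r,s)$ depends only on the realizers $r,s$ of its two hypotheses. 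Hence a single element of $\Phi$ witnessing an inequality between elements of $A$ witnesses it at all indices at once, and the three clauses of the lemma reduce to pointwise invocations of the earlier results, in direct analogy with the passage from Lemma~\ref{lem:aks-ord} to Lemma~\ref{lem:aks-iord}.

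For clause~\ref{lem:oca-islat-slat} I would first establish that $\ent$ is a preorder exactly as for $\sqsubseteq_\Phi$. Reflexivity $\varphi\ent\varphi$ is realized by $\comi\in\Phi$, since $\comi(\varphi(i))\leq\varphi(i)$ for all $i$ by Lemma~\ref{lem:basicomb2}; for transitivity, if $r$ realizes $\varphi\ent\psi$ and $s$ realizes $\psi\ent\chi$, then $\comb s r\in\Phi$ realizes $\varphi\ent\chi$, because $\comb s r(\varphi(i))\leq s(r(\varphi(i)))\leq s(\psi(i))\leq\chi(i)$, using $\comb abc\leq a(bc)$ and monotonicity of the application. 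I would then verify the four meet-semilattice axioms argumentwise, substituting $\varphi(i),\psi(i),\chi(i)$ for $a,b,c$ in Lemma~\ref{lem:meetoca}: the uniform realizers $\compz$, $\compo$, $\coma(r,s)$ and $\comk\comk$ witness respectively $\varphi\wedge\psi\ent\varphi$, $\varphi\wedge\psi\ent\psi$, the pairing rule, and $\varphi\ent\top$, where $\wedge$ and $\top$ are those of Definition~\ref{defi:squareord} read pointwise.

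For clause~\ref{lem:oca-islat-monot}, monotonicity of $f^*$ is immediate: a realizer $r$ of $\varphi\ent\psi$ in $A^I$ also realizes $\varphi\circ f\ent\psi\circ f$ in $A^J$, since $r(\varphi(f(j)))\leq\psi(f(j))$ for every $j\in J$. Meet-preservation is in fact stronger than required: as $\wedge$ and $\top$ are defined argumentwise, one obtains the \emph{equalities} $f^*(\varphi\wedge\psi)=f^*(\varphi)\wedge f^*(\psi)$ and $f^*(\top)=\top$, which a fortiori yield the isomorphisms demanded of a morphism in $\catslat$. Clause~\ref{lem:oca-islat-func} is then bookkeeping: $g^*\circ f^*=(f\circ g)^*$ and $\id_I^*=\id_{A^I}$ follow from the associativity and unit laws for composition of functions, exactly as in the proof of Lemma~\ref{lem:aks-iord}.

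I do not expect a genuine obstacle here. The only point that must be made explicit, rather than computed, is the uniformity of the realizers across the index set; once this is isolated, everything else is a pointwise transcription of Lemmas~\ref{lem:basicomb2} and~\ref{lem:meetoca}, and the two displayed naturality identities for $f^*$ even hold on the nose.
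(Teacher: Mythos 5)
Your proposal is correct and takes essentially the same approach as the paper: clause (1) by pointwise application of the uniform realizers exhibited in Lemma~\ref{lem:meetoca}, clause (2) by transporting a realizer along $f$ and noting that the pointwise $\wedge$ and $\top$ are preserved on the nose, and clause (3) by the associativity and unit laws for composition. The only difference is that you make explicit the preorder axioms (reflexivity via $\comi$, transitivity via $\comb s r$) and the uniformity of the realizers, which the paper leaves implicit by referring back to Lemma~\ref{lem:meetoca} and the analogous Lemma~\ref{lem:aks-ord}.
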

\begin{proof} \ad{\ref{lem:oca-islat-slat}}  We use in the proof of
  this assertion the realizers exhibited in Lemma \ref{lem:meetoca}.  
\medskip

\ad{\ref{lem:oca-islat-monot}} Let $\varphi,\psi:I\to A$. If
$r\in\Phi$ is a realizer of $\varphi\ent\psi$, then it is also a
realizer of $\varphi\circ f\ent \psi\circ f$, thus $f^*$ is
monotone. For meets, we have
\[ ((\varphi\wedge\psi)\circ f)(j) = \comp(\varphi(fj)\psi(fj)) =
((\varphi\circ f)\wedge(\psi\circ f))(j)
\] for all $j\in J$, which means
$f^*(\varphi\wedge\psi)=f^*(\varphi)\wedge f^*(\psi)$. Preservation of
$\top$ is shown in the same way.

\medskip \ad{\ref{lem:oca-islat-func}} It remains to check
functoriality, i.e.\ $g^*\circ f^*=(f\circ g)^*$ and
$\id_I^*=\id_{A^I}$ for $K\xrightarrow{g}J\xrightarrow{f}I$. This
follows from associativity and unit laws for composition.
\end{proof}
\subsection{From $\ioca$\hspace*{1pt}s to triposes}$ $

\medskip
Next we show that if the $\oca$ considered above has the necessary
additional structure to make it a $\ioca$, the indexed meet
semi-lattice just constructed is in fact a tripos. 

For any $\ioca$, $\mathcal
A=(A,\leq,\rapp,\rimp,\Phi,\comk,\coms,\come)$, the quintuple
$(A,\leq,\rapp,\comk,\coms)$ is an $\oca$ --that we also call
$\mathcal A$--, and $\Phi$ is a filter on it.  Thus, we can construct
the indexed meet-semi-lattice $\trip(\mathcal A)$ from
Definition~\ref{def:oca-to-islat}.

\begin{theorem} \label{theo:main} If $\mathcal
  A=(A,\leq,\rapp,\rimp,\Phi,\comk,\coms,\come)$ is a $\ioca$, then
  $\trip(\mathcal A)$ is a tripos.  Moreover, if the $\ioca$ is a
  $\koca$ --with combinator $\comc \in \Phi$--then $\neg \neg \varphi
  \vdash \varphi$, with $\neg \varphi:=\varphi \rightarrow \bot$.
\end{theorem}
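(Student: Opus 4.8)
The plan is to verify the three conditions of Definition~\ref{def:tripos} directly, after upgrading the indexed meet-semi-lattice $\trip(\mathcal A)$ of Lemma~\ref{lem:oca-islat} to a functor landing in $\cathpo$. The classical clause will then be an immediate consequence of Lemma~\ref{lem:classical}.

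First I would define a Heyting implication on each fibre $(A^I,\ent)$ coordinatewise, by $(\varphi\to\psi)(i)=\varphi(i)\to\psi(i)$. The adjunction law $\varphi\wedge\chi\ent\psi \equi \varphi\ent(\chi\to\psi)$ is the fibrewise form of Theorem~\ref{theo:meetadj}(2); the key observation is that the realizers produced there are built from the fixed combinators $\comb,\come,\comp,\comd$ of Definition~\ref{defi:basiccomb} composed with the realizer of the hypothesis, and are therefore \emph{uniform in} $i\in I$, so a single realizer of the antecedent yields a single realizer of the consequent. Preservation of $\top$, $\wedge$ and $\to$ by every reindexing map $f^*$ is immediate, since all three are computed coordinatewise while $f^*$ merely precomposes with $f$. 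Hence $\trip(\mathcal A)$ is a functor $\setopto\cathpo$.

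Next I would produce the right adjoints from $\operatorname{inf}$--completeness: for $f\colon J\to I$ and $\psi\in A^J$ set
\[
\forall_f(\psi)(i)\;=\;\operatorname{inf}\{\psi(j) : f(j)=i\}.
\]
By the characterisation of adjoints in Remark~\ref{remark:equiv-ord}(\ref{item:adj}) I must check $f^*(\varphi)\ent\psi \equi \varphi\ent\forall_f(\psi)$, and this holds \emph{with the same realizer on both sides}: for $r\in\Phi$, the inequalities $r(\varphi(f(j)))\leq\psi(j)$ hold for all $j$ exactly when, for each $i$, the element $r(\varphi(i))$ is a lower bound of $\{\psi(j) : f(j)=i\}$, i.e.\ $r(\varphi(i))\leq\forall_f(\psi)(i)$ --- which is precisely the universal property of the infimum with $\varphi(i)$ held fixed. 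For Beck--Chevalley I would expand both sides of $\forall_q(p^*(\varphi))\cong g^*(\forall_f(\varphi))$ as infima; the pullback hypothesis makes $x\mapsto p(x)$ a bijection between $\{x : q(x)=k\}$ and $\{j : f(j)=g(k)\}$, so the two indexing families are identical and the two infima are \emph{equal}, giving the condition strictly.

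Finally, for a generic predicate I would take $\prop=A$ and $\triptr=\id_A\in A^A=\trip(\mathcal A)(A)$; then for any $\varphi\in A^I$ the function $\chi_\varphi=\varphi\colon I\to A$ gives $\chi_\varphi^*(\triptr)=\id_A\circ\varphi=\varphi$, recovering $\varphi$ on the nose. For the $\koca$ clause, $\bot=\operatorname{inf}A$ exists by $\operatorname{inf}$--completeness, and Lemma~\ref{lem:classical} yields $\comc((\varphi(i)\to\bot)\to\bot)\leq\varphi(i)$ for every $i$, so the single element $\comc\in\Phi$ realizes $\neg\neg\varphi\ent\varphi$. The only delicate point in the whole argument is this recurring demand for uniformity: once one verifies that the combinators supplied by Definition~\ref{defi:basiccomb} and Theorem~\ref{theo:meetadj} are genuinely element-independent, one fixed realizer serves simultaneously for all indices $i\in I$, and the adjunction, Beck--Chevalley condition and generic predicate then come out strict rather than merely up to isomorphism.
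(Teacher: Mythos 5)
Your proposal is correct and follows essentially the same route as the paper's proof: coordinatewise Heyting implication justified fibrewise by Theorem~\ref{theo:meetadj}, universal quantification via pointwise infima with the same realizer witnessing both sides of the adjunction, Beck--Chevalley holding up to equality because the pullback identifies the two indexing sets, $\id_A$ as generic predicate, and Lemma~\ref{lem:classical} for the classical clause. Your explicit attention to the uniformity of the realizers in $i$ is a point the paper leaves implicit, but it is a refinement of the same argument, not a different one.
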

\begin{proof} We know that $\trip(\mathcal A)$ is an indexed
meet-semi-lattice, and it remains to show that it has implication,
universal quantification, and a generic predicate.

For $\psi,\theta:I\to A$, we define $\varphi\to\psi$ by
\[ (\psi\to\theta)(i) = \varphi(i)\to\psi(i)
\] To see that this gives a Heyting implication on $(A^I,\ent)$, we
have to check that
\[\varphi\ent\psi\to\theta \quad\equi\quad
\varphi\wedge\psi\ent\theta \] where
$(\varphi\wedge\psi)(i)=\comp\varphi(i)\psi(i)$.  In a similar manner
as before, the assertion is shown in the same way as Theorem
\ref{theo:meetadj}.

Universal quantification of a predicate $\psi:J\to A$ along a function
$f:J\to I$ is defined by
\[ \forall_f(\psi)(i)=\inf_{f(j)=i}\psi(j)
\] With this definition it follows directly that for any
$\varphi:I\to A$ and $r\in\Phi$ we have

\begin{align*} 
& \forall j\in J\qdot r\,\varphi(f(j))\;\leq\;\psi(j)\\ 
\equi\quad & \forall i\in I\qdot r\,\varphi(i)\;\leq\;\forall_f(\psi)(i), 
\end{align*}
which means that $f^*\varphi\ent\psi$ if and only if
$\varphi\ent\forall_f\psi$ (with the same realizer), hence
Remark \ref{remark:equiv-ord},~\eqref{item:adj} implies that $\forall_f$ is right adjoint to
$f^*$.

For the Beck-Chevalley condition, consider the pullback
square~\eqref{eq:pullback} in Definition~\ref{def:tripos}, and let
$\varphi:J\to I$. For $k\in K$ we have
\begin{align*} g^*(\forall_f(\varphi))(k) & = \inf_{fj=gk}\varphi(j)\\
\text{and}\quad \forall_q(p^*(\varphi))(k) &= \inf_{\substack{x\in
P\\qx = k}} \varphi(p(x)).
\end{align*} In the first case, the infimum is taken over the set
$\setof{j\in J}{f(j)=g(k)}$, and in the second case over the set
$\setof{j\in J}{\exists x\in P\qdot p(x) = j,q(x)=k}$. These two sets
are equal since the square is a pullback (thus the Beck Chevalley
condition holds even up to equality).

Finally, a generic predicate for $\trip(\mathcal A)$ is given by
$\id_A\in\trip(\mathcal A)(A)$.

The fact that $\neg \neg \varphi \vdash \varphi$ for
all predicates $\varphi$, follows directly from Lemma
\ref{lem:classical}.

\end{proof}

\subsection{From $\mathcal {AKS}$s to $\koca$s}\label{subsection:akstooca}
$ $ \medskip

 Next, we recall the construction due to Streicher (see
 \cite{kn:streicher}) that starting from an $\mathcal
 {AKS}$ abbreviated as $\mathcal K$ produces a  $\koca$ --that we call
 $\mathcal A_\mathcal K$-- and show that they induce isomorphic
 indexed preorders --in fact triposes--. 

\begin{definition}\label{defi:paksoca} Given an $\mathcal {AKS}$:
  \[\mathcal
  K=(\Lambda,\Pi,\Perp,\operatorname{push},\operatorname{app},
  \operatorname{store}, \cK,\cS,\cCC,\operatorname{QP})\] define
  \[\mathcal A_\mathcal
  K=(A,\leq,\rapp,\rimp,\comk,\coms,\comc,\come,\Phi)\] as follows.
\begin{enumerate}
\item $(A,\leq)=(\pobot(\Pi),\sups)$;
\medskip
\item $\rapp(P,Q)=P \circ_{\perp} Q=({}^\perp({}^\perp Q \leadsto
  P))^\perp$, $\mathrm{imp}(P,Q)=P \Rightarrow_{\perp} Q=
  ({}^\perp({}^\perp P \push Q))^\perp$;
\medskip
\item $\ck=\{\cK\}^\perp$, $\cs=\{\cS\}^\perp$,
  $\cc=\{\cCC\}^\perp$, $\ce=\{\cE\cE\}^\perp$ , where
  $\cE=\cS(\cK(\cS\cK\cK))$;
\medskip
\item $\Phi=\setof{P\in\pobot(\Pi)}{\exists t\in \kkqp\qdot
t\perp P}$.
\end{enumerate} 
If $a,b \in A$ we write $ab:=\rapp(a,b)$ and $a \to b:=\rimp(a,b)$.
See Definitions \ref{defi:maps2}, \ref{defi:aks} and \ref{defi:maps1}.  
\end{definition} 

We recall the following important theorem from \cite{kn:streicher} and
write down a short proof for later use. 

\medskip
\begin{theorem}\label{theo:pakstooca} Let $\mathcal K$ be 
  an $\mathcal{AKS}$ and consider the structure $\mathcal
  A_\mathcal K$ presented in Definition \ref{defi:paksoca}.
\medskip
\begin{enumerate}
\item  Then, $\mathcal A_\mathcal K$ is a
  $\koca$. 
\medskip
\item The associated indexed preorders $\trip_\bot(\mathcal K)$ and
  $\trip(\mathcal A_\mathcal K)$ are isomorphic.
\medskip
\end{enumerate}
\end{theorem}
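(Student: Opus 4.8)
The plan is to treat the two assertions separately, leaning entirely on the results of Section~2.

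\emph{Ad (1).} I would check the axioms of Definitions~\ref{defi:ioca} and~\ref{defi:oca} one at a time, noting that most are literal restatements of earlier results. The poset $(A,\leq)=(\pobot(\Pi),\sups)$ is $\operatorname{inf}$--complete because infima for $\sups$ are suprema for inclusion, which exist by Remark~\ref{remark:initialrl}, \eqref{item:completion}. Monotonicity of $\rapp=\circ_{\perp}$ and the antitone/monotone behaviour of $\rimp=\Rightarrow_{\perp}$ are mechanical consequences of the (anti)monotonicity of $(\,)^\perp$, ${}^\perp(\,)$, $\leadsto$ and $\push$. That $\Phi$ is closed under application and contains $\ck,\cs,\ce,\cc$ follows from Lemma~\ref{lem:circdiamond} (if $t\orth P$ and $u\orth Q$ then $tu\orth P\circ_{\perp}Q$) together with $\cK,\cS,\cCC,\cE\cE\in\kkqp$, each realizing the corresponding element $\ck,\cs,\cc,\ce$. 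The half adjunction axioms are the adjunction theorems verbatim: \rpaxha\ is Theorem~\ref{theorem:adjunction} (take $P=a$, $Q=b$, $R=c$) and \rpaxe\ is Theorem~\ref{theo:adjunctionconverse} once one notes $\ce=\{\cE\cE\}^\perp=(\cE\cE)^\perp$. Finally \rpaxc\ is exactly Lemma~\ref{lem:aks}, \eqref{lem:aks-cc}, which gives $\cCC\orth((a\to b)\to a)\to a$, i.e.\ $((a\to b)\to a)\to a\subseteq\{\cCC\}^\perp=\cc$, that is $\cc\leq((a\to b)\to a)\to a$.

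The only delicate axioms are \rpaxk\ and \rpaxs, and here one must resist deducing them from $\cK$ resp.\ $\cS$ realizing a \emph{closed} nested implication: the clauses \eqref{lem:aks-k} and \eqref{lem:aks-s} only provide the \emph{open} implications (built with $\imp$), which are not interchangeable with $\Rightarrow_{\perp}$ inside a nested implication. I would instead peel arguments off directly through the conductors. For \rpaxk, given $\pi\in a$ and $t\in{}^\perp a$ we have $t\orth\pi$, so (S2) gives $\cK\orth t\push s\push\pi$ for all $s$; letting $t$ range over ${}^\perp a$ shows $s\push\pi\in{}^\perp a\leadsto\ck\subseteq\ck\circ_{\perp}a$, and then letting $s$ range over ${}^\perp b$ shows $\pi\in{}^\perp b\leadsto(\ck\circ_{\perp}a)\subseteq\ck\circ_{\perp}a\circ_{\perp}b$; hence $a\subseteq\ck\circ_{\perp}a\circ_{\perp}b$, i.e.\ $\ck a b\leq a$. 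For \rpaxs\ the same peeling works after supplying the hypothesis of (S3): for $t\in{}^\perp a$, $s\in{}^\perp b$, $u\in{}^\perp c$ and $\pi\in ac(bc)=(a\circ_{\perp}c)\circ_{\perp}(b\circ_{\perp}c)$, Lemma~\ref{lem:circdiamond} gives $tu\orth a\circ_{\perp}c$, $su\orth b\circ_{\perp}c$, hence $tu(su)\orth\pi$, so (S3) yields $\cS\orth t\push s\push u\push\pi$; peeling $t$, then $s$, then $u$ through the successive conductors gives $\pi\in\cs\circ_{\perp}a\circ_{\perp}b\circ_{\perp}c$, i.e.\ $\cs a b c\leq ac(bc)$. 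Each peeling step uses only the definition of $\leadsto$ and the closure $X\circ Y\subseteq X\circ_{\perp}Y$, never a closed--implication realizer, which is precisely what makes the argument valid.

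\emph{Ad (2).} Both indexed preorders have underlying sets $\trip_\bot(\mathcal K)(I)=\trip(\mathcal A_\mathcal K)(I)=\pobot(\Pi)^I$, and in both, reindexing along $f\colon J\to I$ is precomposition; so the identity family is a strict indexed map in either direction and commutes with reindexing, and it suffices to show that for each $I$ the two entailment relations on $\pobot(\Pi)^I$ coincide. Unwinding the definitions, I must prove for $\varphi,\psi\colon I\to\pobot(\Pi)$ that there is $t\in\kkqp$ with $t\orth\varphi(i)\imp\psi(i)$ for all $i$ if and only if there is $r\in\Phi$ with $\psi(i)\subseteq r\circ_{\perp}\varphi(i)$ for all $i$. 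For the forward direction I put $r=\{t\}^\perp$, which lies in $\Phi$ since $t\in\kkqp$ and $t\orth\{t\}^\perp$; from $t\orth\varphi(i)\imp\psi(i)$ I get $\{t\}\subseteq{}^\perp(\varphi(i)\imp\psi(i))$, hence $\varphi(i)\Rightarrow_{\perp}\psi(i)\subseteq\{t\}^\perp=r$, and Theorem~\ref{theorem:adjunction} then yields $\psi(i)\subseteq r\circ_{\perp}\varphi(i)$, uniformly in $i$.

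For the backward direction I choose $s\in\kkqp$ with $s\orth r$ (possible as $r\in\Phi$). From $\psi(i)\subseteq r\circ_{\perp}\varphi(i)$, Theorem~\ref{theo:adjunctionconverse} gives $\varphi(i)\Rightarrow_{\perp}\psi(i)\subseteq(\cE\cE)^\perp\circ_{\perp}r=\ce\circ_{\perp}r$ for every $i$. Since $\cE\cE\in\kkqp$ realizes $\ce=(\cE\cE)^\perp$ and $s$ realizes $r$, Lemma~\ref{lem:circdiamond} shows the quasi--proof $(\cE\cE)s$ realizes $\ce\circ_{\perp}r$; then antitonicity of ${}^\perp(\,)$ together with ${}^\perp(\varphi(i)\imp\psi(i))={}^\perp(\varphi(i)\Rightarrow_{\perp}\psi(i))$ (Remark~\ref{remark:equal}) show that the single quasi--proof $(\cE\cE)s$ realizes $\varphi(i)\imp\psi(i)$ for all $i$ simultaneously. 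Thus the two entailment relations agree and the identity is the desired isomorphism of indexed preorders. I expect the main obstacle to be this backward direction: recovering one Krivine quasi--proof valid for all $i$ from an arbitrary $\mathcal A_\mathcal K$--realizer $r$ is exactly what forces the use of the adjunctor $\cE\cE$ and of the converse half adjunction, the uniform choice $(\cE\cE)s$ being what keeps the realizer independent of $i$; the analogous subtlety in part~(1) is the open--versus--closed implication mismatch in \rpaxk\ and \rpaxs, resolved by the conductor--peeling above.
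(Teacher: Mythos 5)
Your proposal is correct and takes essentially the same approach as the paper: part (1) is verified axiom-by-axiom from Theorems \ref{theorem:adjunction} and \ref{theo:adjunctionconverse}, Lemma \ref{lem:circdiamond}, Lemma \ref{lem:aks} and axioms (S2)/(S3), with your conductor-peeling argument being exactly the paper's treatment of \rpaxs\ (for \rpaxk\ the paper peels once and then invokes \rpaxha, a cosmetic difference), and your caution about open versus closed nested implications matches the paper's actual strategy. Part (2) likewise coincides with the paper's chain of equivalences; your explicit realizers $\{t\}^\perp$ and $(\cE\cE)s$ merely unwind the paper's appeal to \rpaxe\ and to closure of $\Phi$ under application.
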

\begin{proof} 
\begin{enumerate}
\item The order is clearly inf complete as we observed in Remark
  \ref{remark:initialrl}. The fact that the implication and
  application satisfy the monotonicity properties, is clear. The
  implication $\rightarrow$ satisfies the \emph{half adjunction
    property}: if $a \leq (b \rightarrow c)$ then $ab \leq c$ as was
  established in Theorem \ref{theorem:adjunction}.

Next we prove that $\ck ab \leq a$.  Lemma \ref{lem:aks} (2)
guarantees that for all $a,b \in A$, $\cK \in {}^{\perp}\big({}^\perp
a.({}^\perp b . a)\big)$.  This assertion means that $\{\cK\}
\subseteq {}^{\perp}\big({}^\perp a.({}^\perp b . a)\big)$ and then
$\ck \supseteq \Big({}^{\perp}\big({}^\perp a.({}^\perp b
. a)\big)\Big)^\perp \supseteq {}^\perp a.({}^\perp b . a)$ that can
be written as ${}^\perp a \leadsto \ck \supseteq {}^\perp b
. a$. Moreover, from Definition \ref{defi:maps2}, \eqref{eqn:maps1} we
deduce that $\ck \circ_{\perp}\, a \supseteq {}^\perp a \leadsto \ck \supseteq
{}^\perp b . a$, i.e. $\ck \circ_{\perp}\,a \leq (b \rightarrow a)$ --compare
with Definition \ref{defi:maps2}--. Using the half adjunction property
(Theorem \ref{theorem:adjunction}), we deduce that $\ck ab \leq a$.

The condition $\cs abc \leq (ac)(bc)$ can be proved as follows.  Take
$t \perp a$, $s \perp b$, $u \perp c$ then using Lemma
\ref{lem:circdiamond} we deduce that $(su) \perp bc$ and $(tu) \perp
ac$ and also that  $(tu)(su) \perp (ac)(bc)$ and if $\pi \in (ac)(bc)$
is an arbitrary element we conclude that $(tu)(su) \perp \pi$.  
Then by the Definition \ref{defi:aks}, (S3) we conclude that $\cS
\perp t.s.u.\pi$. Hence we have proved that
$\cS \perp {}^\perp a . {}^\perp b .{}^\perp c .(ac)(bc)$ or $\cS \in
{}^\perp({}^\perp a . {}^\perp b .{}^\perp c .(ac)(bc))$ or
equivalently that $\cs \supseteq
{}^\perp a . {}^\perp b .{}^\perp c .(ac)(bc)$.

Assume now that we have a situation as follows: $x,y \in A$, $z
\subseteq \Pi$ with ${}^\perp x \push z \subseteq y$, clearly it
follows from Remark \ref{remark:firstadj} that $z \subseteq
{}^\perp x \leadsto y$

If we apply repeatedly the above observation to $
{}^\perp a . {}^\perp b .{}^\perp c .(ac)(bc) \subseteq \cs$ we deduce
that $(ac)(bc) \subseteq \cs a b c$, and the proof of this part is
finished.

The proof that $\ce$ as introduced in Definition \ref{defi:paksoca},
is an adjunctor is the content of Theorem
\ref{theo:adjunctionconverse}.

The proof that $\Phi \subseteq A$ is a filter that contains
$\ck,\cs,\ce$ is the following. The subset $\Phi$ is closed under
application because if
$f,g \in \Phi$, i.e. if we have $t_f\in {}^\perp f \cap
\operatorname{QP}$ and $t_g\in {}^\perp g \cap \operatorname{QP}$ then
$t_ft_g\in {}^\perp f {}^\perp g \cap \operatorname{QP} \subseteq
{}^\perp(f \circ_\perp g) \cap \operatorname{QP}$ (Lemma
\ref{lem:circdiamond}).  Moreover, $\ck,\cs, \ce \in \Phi$ because $\cK
\in {}^\perp \ck \cap \operatorname{QP}$, $\cS \in {}^\perp \cs \cap
\operatorname{QP}$ and $\cE \cE \in {}^\perp\ce \cap
\operatorname{QP}$.

Finally, as we took $\cc=\{\cCC\}^\perp$, it is clear that: $\cCC \in
{}^\perp\cc \cap \operatorname{QP}$. Moreover, we proved in Lemma
\ref{lem:aks} that $\cCC \in
{}^\perp(((a \rightarrow b) \rightarrow a)\rightarrow a)$, that
implies that $\operatorname c \supseteq ({}^\perp(((a \rightarrow b)
\rightarrow a)\rightarrow a))^\perp=(((a\rightarrow b)\rightarrow
a)\rightarrow a)$, i.e. $\operatorname c \leq (((a\rightarrow
b)\rightarrow a)\rightarrow a)$.
\item 
In both cases the predicates on a set $I$ are functions
$\varphi,\psi:I\to \pobot(\Pi)$, so we only have to check that the
two definitions of entailment coincide. The entailment in
$\trip(\mathcal A_\mathcal K)$ is
given by
\begin{align*} & \exists P\in \Phi\;\forall i\in I\qdot
P\varphi(i)\leq\psi(i)\\ \intertext{which using the adjunctor and
  substituting $P$ by $\ce P$ can be
formulated equivalently as:} & \exists P\in \Phi\;\forall i\in I\qdot
P\leq\varphi(i)\to\psi(i)\\
\end{align*}
As to the equivalence we have that:
\begin{align*} & \exists P\in \Phi\;\forall i\in I\qdot
P\leq\varphi(i)\to\psi(i)\\ \equi\quad & \exists t\in
\operatorname{QP}\;\forall i\in I\qdot
\{t\}^\perp\sups({}^\perp({}^\perp\varphi(i)\push\psi(i)))^\perp\\
\equi\quad & \exists t\in \operatorname{QP}\;\forall i\in I\qdot
t\perp ({}^\perp({}^\perp\varphi(i)\push\psi(i)))^\perp\\ \equi\quad & \exists t\in
\operatorname{QP}\;\forall i\in I\qdot
t\perp{}^\perp\varphi(i)\push\psi(i) \\
\equi\quad & \exists t\in
\operatorname{QP}\;\forall i\in I\qdot
t\perp{} \varphi(i)\Rightarrow \psi(i)\\
\end{align*} and the last line is the definition of entailment in
$\trip_\perp(\mathcal K)$.
\end{enumerate}
\end{proof}

\subsection{From $\koca$\hspace*{1pt}s to $\mathcal{AKS}$s}$ $ 

\medskip
In order to complete our program to set up the foundations of
realizability in terms of $\koca$\hspace*{1pt}s, we reverse the
construction presented in Subsection \ref{subsection:akstooca} and
show how to construct from  an $\koca$ called $\mathcal A$, an $\mathcal
{AKS}$ named as $\mathcal K_\mathcal A$. Then, we prove that the
corresponding triposes are equivalent.

\begin{definition}\label{def:aks-from-rdlp} Given a $\koca$ 
\[\mathcal A=(A,\leq,\rapp_{\mathcal A},\rimp,\comk,\coms,\comc,\come,\Phi)\]
we define the structure:
\[\mathcal
  K_\mathcal A=(\Lambda,\Pi,\Perp,\operatorname{push},\operatorname{app},
  \operatorname{store}, \cK,\cS,\cCC,\operatorname{QP})\]
as follows. 
\begin{enumerate}
\item $\Lambda=\Pi:=A$
\item $\Perp:=\leq$, i.e.\quad $s\orth\pi\defequi s\leq\pi$
\item $\mathrm{push}(s,\pi):=\rimp(s,\pi)=
  s\to\pi,\qquad \rapp(s,t):=\rapp_{\mathcal A}(s,t)=st,
  \qquad\mathrm{store}(\pi):=\neg\pi$
\item $\cK := \come(\comb\come\comk),\quad \cS :=
\come(\comb(\comb\come(\comb\come))\coms),\quad \cCC := \come\comc$
\item $\operatorname{QP}:=\Phi$
\end{enumerate}
\end{definition} Here, $\comb$ is an abbreviation for
$\coms(\comk\coms)\comk$, which has the property that $\comb a b c\leq
a(bc)$ for all $a,b,c\in A$, and $\neg \pi$ is a shorthand for
$\pi\to\bot$ and $\bot:=\operatorname{inf}(A)$.
\begin{theorem} 
In the notations of Definition \ref{def:aks-from-rdlp}
  the structure $\mathcal K_\mathcal A$ is an $\mathcal {AKS}$.

\end{theorem}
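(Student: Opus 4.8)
The plan is to unwind the definitions so that each datum of an $\mathcal{AKS}$ (Definition \ref{defi:aks}) is read off from the $\koca$ structure, and then to verify the five axioms (S1)--(S5) as inequalities in $A$. Under the dictionary fixed in Definition \ref{def:aks-from-rdlp}, orthogonality $s\orth\pi$ is the order $s\leq\pi$, the push $t\cdot\pi$ is the implication $t\to\pi$, application is that of $\mathcal A$, and $k_\pi=\neg\pi=\pi\to\bot$. The structural requirements are immediate: with $\Lambda=\Pi=A$, orthogonality $\leq$ and push $\to$ one has a realizability lattice with push, and $\operatorname{app}$, $\operatorname{store}$ are functions by construction. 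Since $\operatorname{QP}=\Phi$ is a filter it is closed under application; and because $\comk,\coms,\come,\comc\in\Phi$ and $\comb=\coms(\comk\coms)\comk$ is obtained from $\comk,\coms$ by application, the combinators $\cK=\come(\comb\come\comk)$, $\cS=\come(\comb(\comb\come(\comb\come))\coms)$ and $\cCC=\come\comc$ all lie in $\Phi=\operatorname{QP}$.

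It then remains to check (S1)--(S5). Two of them are essentially free. Axiom (S1), which reads ``$t\leq s\to\pi\imp ts\leq\pi$'', is exactly the half adjunction property \rpaxha. For (S5) I would translate $k_\pi=\pi\to\bot$ and argue purely by monotonicity: from $t\leq\pi$ and antitonicity of $\to$ in its first argument one gets $\pi\to\bot\leq t\to\bot$, and since $\bot=\operatorname{inf}(A)\leq\pi'$ together with monotonicity of $\to$ in its second argument one gets $t\to\bot\leq t\to\pi'$; chaining gives $k_\pi=\pi\to\bot\leq t\to\pi'$ for every $\pi'$, as required.

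The three remaining axioms (S2), (S3), (S4) are where the specific shapes of $\cK,\cS,\cCC$ matter, and all follow one recurring mechanism. The adjunctor law \rpaxe turns an application inequality $ab\leq c$ into the implication inequality $\come a\leq b\to c$, which is precisely what is needed to manufacture a pushed stack $b\to c$, while the law $\comb abc\leq a(bc)$ lets one strip arguments one at a time. For (S2) I would apply \rpaxe to reduce $\cK=\come(\comb\come\comk)\leq t\to(s\to\pi)$ to $(\comb\come\comk)\,t\leq s\to\pi$, use the $\comb$-law to bound the left side by $\come(\comk t)$, apply \rpaxe again so that it suffices to have $(\comk t)s\leq\pi$, and close with \rpaxk ($\comk t s\leq t$) and the hypothesis $t\leq\pi$. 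For (S4), \rpaxe reduces $\cCC=\come\comc\leq t\to\pi$ to $\comc t\leq\pi$; instantiating \rpaxc at $a=\pi$, $b=\bot$ gives $\comc\leq((\pi\to\bot)\to\pi)\to\pi$, whence \rpaxha yields $\comc((\pi\to\bot)\to\pi)\leq\pi$, and monotonicity of application in the second argument together with the hypothesis $t\leq(\pi\to\bot)\to\pi=k_\pi\to\pi$ gives $\comc t\leq\comc((\pi\to\bot)\to\pi)\leq\pi$.

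The main obstacle is (S3), not because it is conceptually different but because $\cS$ is the most deeply nested of the three combinators, so the bookkeeping is longest. The same routine applies: repeated use of \rpaxe peels off the three implications in $t\to(s\to(u\to\pi))$, repeated use of the $\comb$-law routes the arguments $t,s,u$ into place through the layers of $\come$, and the computation bottoms out at \rpaxs, which gives $\coms t s u\leq tu(su)\leq\pi$ using the hypothesis of (S3). I expect no genuine difficulty here beyond carefully tracking the left-association convention for application against the right-association of pushes, and checking at each \rpaxe-step that the relevant application inequality has already been established.
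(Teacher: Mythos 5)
Your proposal is correct and follows essentially the same route as the paper: the structural data and membership of $\cK,\cS,\cCC$ in $\operatorname{QP}=\Phi$ are immediate, (S1) is \rpaxha, (S5) is monotonicity/antitonicity of $\to$, and (S2)--(S4) are obtained by the same interplay of \rpaxe, the $\comb$-law, and \rpaxk, \rpaxs, \rpaxc\ that the paper's derivations use (the paper writes your (S2)--(S4) reductions in the forward direction, and carries out in full the (S3) bookkeeping you correctly sketch).
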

\begin{proof} 
It is clear that $\operatorname{QP}$ is closed under application
  and contains $\cK,\cS, \cCC$, and it remains to check the axioms
  about the orthogonality relation (see Definition
  \ref{defi:aks}). Substituting the above definitions, these axioms
  become:

\noindent $
\begin{array}{l@{\hspace{.5cm}}r@{\;\leq\;}l@{\quad\imp\quad}
r@{\;\leq\;}l} \text{(S1)} &t & u\to\pi& tu&\pi \\
\text{(S2)} &t&\pi &\come(\comb\come\comk)& t\to u\to\pi \\
\text{(S3)} & tv(uv)&\pi &
\come(\comb(\comb\come(\comb\come))\coms) & t\to u\to v\to\pi\\
\text{(S4)} & t&\neg\pi\to\pi&\come\comc& t\to\pi \\
\text{(S5)} &t&\pi&\neg\pi& t\to\pi',\quad \forall \pi'
\end{array} $

(S1)\ follows from Definition \ref{defi:oca}, (PA), and (S5)\ follows from
monotonicity of the arrow in its second argument and the antitonicity
in the first.

(S2)\ is shown by the following derivation.
\[ \def\fCenter{\; \leq\; } \AX$t\fCenter\pi$ \UI$\comk tu\fCenter
\pi$ \UI$\come(\comk t)\fCenter u\to\pi$ \UI$\comb\come\comk t\fCenter
u\to\pi$ \UI$\come(\comb\come\comk) \fCenter t\to u\to\pi$ \DP
\]

(S3)\ is proved using repeatedly the basic properties
of $\cb$ and $\ce$ as follows:
\[ \def\fCenter{\; \leq\; } \AX$tv(uv)\fCenter\pi$ \UI$\coms
tuv\fCenter\pi$ \UI$\come(\coms tu)\fCenter v\to\pi$
\UI$\comb\come(\coms t)u\fCenter v\to\pi$ \UI$ \come (\comb\come
(\coms t))\fCenter u\to v\to\pi$ \UI$\comb \come (\comb\come) (\coms
t)\fCenter u\to v\to\pi$ \UI$\comb(\comb\come(\comb\come)) \coms
t\fCenter u\to v\to\pi$ \UI$\come(\comb(\comb\come(\comb\come)) \coms)
\fCenter t\to u\to v\to\pi$ \DP
\]

Finally, $(S4)$\ is proved using the basic property of $\cC$
--Definition \ref{defi:oca}, \rpaxc, the monotony of the application
and the definition of $\ce$-- as follows:
\[ \AXC{\rpaxc} \UIC{$\comc\leq (\neg\pi\to\pi)\to\pi$}
\AXC{$t\leq\neg\pi\to\pi$} \BIC{$\comc t\leq
((\neg\pi\to\pi)\to\pi)(\neg\pi\to\pi)$} \AXC{$$}
\UIC{$(\neg\pi\to\pi)\to\pi\leq(\neg\pi\to\pi)\to\pi$}
\UIC{$((\neg\pi\to\pi)\to\pi)(\neg\pi\to\pi)\leq\pi$} \BIC{$\comc
t\leq\pi$} \UIC{$\come\comc\leq t\to\pi$} \DP
\]
\end{proof}

\begin{definition}
Let $(D,\leq)$ be a preorder. 
\begin{enumerate}
\item 
A \emph{principal filter} in $D$ is a subset of 
$D$ of the form 
\[
\aucl d_0:=\setof{d\in D}{d_0\leq d}.
\]
for some $d_0\in D$.
\item
Dually, a \emph{principal ideal} in $D$ is a subset of the form 
\[
\adcl d_0:=\setof{d\in D}{d\leq d_0}.
\]
for $d_0\in D$.
\end{enumerate}
\end{definition}
\begin{lemma}\label{lem:galois}
Let $\mathcal A$ be a $\koca$  structure, and $\mathcal K_\mathcal A$ the $\mathcal{AKS}$ induced via the
construction 
in Definition~\ref{def:aks-from-rdlp}. 
\begin{enumerate}
\item \label{lem:galois-u}
For $U\subs A$ we have ${}^\perp U =\adcl(\inf U)$, and $U^\perp=\aucl(\sup
U)$. 
\item \label{lem:galois-d}
For $a\in A$ we have $\inf(\aucl a) = a = \sup(\adcl a)$
\item\label{lem:galois-pbot} 
The set $\power_\bot(\Pi)$ consists precisely of the 
{principal filters} in $A$, and the maps
\[
f:A\to\pobot(\Pi),\, a\mapsto \aucl a
\qtext{and} g: \pobot(\Pi)\to A,\, P\mapsto \inf P,
\]
are mutually inverse and establish a bijection between $A$ and
$\pobot(\Pi)$.
\item\label{lem:galois-infimp} For $P,Q\in\power_\bot(\Pi)$ we have
  $\inf(P \imp_{\bot} Q)=\inf(P\imp Q)=\inf P\to \inf Q$.
\end{enumerate}
\end{lemma}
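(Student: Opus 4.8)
The plan is to exploit the defining feature of $\mathcal K_\mathcal A$ that the orthogonality relation is \emph{literally} the order: $s\orth\pi\Leftrightarrow s\leq\pi$. Under this identification the two polarity operators collapse to the classical ``set of lower bounds'' and ``set of upper bounds'' maps on $(A,\leq)$; note that an $\operatorname{inf}$-complete poset is automatically a complete lattice (one recovers $\sup U$ as the infimum of the set of upper bounds of $U$), so all the suprema appearing below exist. I would therefore begin with \eqref{lem:galois-u}, from which everything else flows. For $U\subseteq A$, unfolding the definition gives ${}^\perp U=\setof{t\in A}{t\leq\pi\text{ for all }\pi\in U}$, which is exactly the set of lower bounds of $U$; since $\inf U$ is the \emph{greatest} lower bound, $t$ is a lower bound iff $t\leq\inf U$, whence ${}^\perp U=\adcl(\inf U)$. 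Dually $U^\perp=\setof{\pi\in A}{t\leq\pi\text{ for all }t\in U}$ is the set of upper bounds, and $\pi$ is an upper bound iff $\sup U\leq\pi$, so $U^\perp=\aucl(\sup U)$.

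Part \eqref{lem:galois-d} is then immediate: $a$ is simultaneously the least element of $\aucl a$ and a lower bound of it, so $\inf(\aucl a)=a$, and dually $\sup(\adcl a)=a$. Part \eqref{lem:galois-pbot} follows by composing the previous two. For any $P\subseteq A$, parts \eqref{lem:galois-u} and \eqref{lem:galois-d} yield $({}^\perp P)^\perp=(\adcl(\inf P))^\perp=\aucl(\sup(\adcl(\inf P)))=\aucl(\inf P)$. Hence $P\in\pobot(\Pi)$ iff $P=\aucl(\inf P)$, i.e.\ iff $P$ is the principal filter on its own infimum; conversely every principal filter $\aucl a$ is a fixpoint, since ${}^\perp(\aucl a)=\adcl a$ and $(\adcl a)^\perp=\aucl a$. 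This characterisation makes $f$ and $g$ mutually inverse: $g(f(a))=\inf(\aucl a)=a$ by \eqref{lem:galois-d}, and $f(g(P))=\aucl(\inf P)=P$ by the fixpoint description just obtained.

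The computational heart is \eqref{lem:galois-infimp}. Here I would unfold $P\imp Q={}^\perp P\push Q=\setof{t\to\pi}{t\in{}^\perp P,\ \pi\in Q}$, recalling that in $\mathcal K_\mathcal A$ the push map is implication. By part \eqref{lem:galois-u}, ${}^\perp P=\adcl(\inf P)$, so $t$ ranges exactly over $t\leq\inf P$; and since $Q\in\pobot(\Pi)$ equals the principal filter $\aucl(\inf Q)$ by \eqref{lem:galois-pbot}, $\pi$ ranges over $\pi\geq\inf Q$. Antitonicity of $\to$ in its first argument and monotonicity in its second then give, for any admissible $t,\pi$, the chain $(\inf P\to\inf Q)\leq(t\to\inf Q)\leq(t\to\pi)$, so $\inf P\to\inf Q$ is a lower bound of $P\imp Q$; and it actually lies in $P\imp Q$ by taking $t=\inf P\in{}^\perp P$ and $\pi=\inf Q\in Q$, both genuine members. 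Therefore it is the minimum, and $\inf(P\imp Q)=\inf P\to\inf Q$. For the $\imp_{\bot}$ variant, $P\imp_{\bot}Q=({}^\perp(P\imp Q))^\perp$ equals $\aucl(\inf(P\imp Q))$ by applying \eqref{lem:galois-u} and \eqref{lem:galois-d} to $R=P\imp Q$, and taking the infimum of this principal filter returns $\inf(P\imp Q)$ again by \eqref{lem:galois-d}.

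I expect the single nontrivial step to be the extremal argument in \eqref{lem:galois-infimp}: one must verify that the infimum of the doubly parametrised family $\setof{t\to\pi}{t\leq\inf P,\ \inf Q\leq\pi}$ is attained simultaneously at both extremes. This rests squarely on the two monotonicity conventions for $\to$ together with the fact that $\inf P\in{}^\perp P$ and $\inf Q\in Q$ are genuine elements of the respective sets --- the latter being precisely where the hypothesis $Q\in\pobot(\Pi)$, and hence part \eqref{lem:galois-pbot}, is used. Everything else is a routine translation of polarities into infima and suprema.
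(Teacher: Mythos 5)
Your proof is correct and takes essentially the same route as the paper's: both unfold the polarities as sets of lower/upper bounds of $(A,\leq)$, characterize $\pobot(\Pi)$ via the fixpoint computation $({}^\perp P)^\perp=\aucl(\inf P)$, and obtain $\inf(P\imp Q)=\inf P\to\inf Q$ by the same extremal argument (antitonicity/monotonicity of $\to$ together with the fact that $\inf P\to\inf Q$ is itself a member of the set). The only differences are cosmetic: you make explicit that inf-completeness yields the needed suprema, and you prove the core equality of part~\eqref{lem:galois-infimp} before deducing the $\imp_\bot$ variant, whereas the paper presents these two steps in the opposite order.
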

\begin{proof}
\ad{\ref{lem:galois-u}}
${}^\perp U$ is the set of lower bounds of $U$, and $\inf U$ is the
greatest lower
bound. An element $a\in A$ is a lower bound of $U$ if and only if it
is smaller
than the greatest lower bound. The second claim is just the dual
(recall that this duality is valid in a lattice). 

\ad{\ref{lem:galois-d}}
$a$ is a lower bound of $\aucl a$, and since $a\in\aucl a$ any other
lower bound
must be smaller. Thus $a$ is the greatest lower bound. The second part
is
symmetric.

\ad{\ref{lem:galois-pbot}} For $P\subs A$ we have 
$({}^\bot P)^\bot= (\adcl(\inf P))^\bot=\aucl(\inf P)$, thus all
$({}^\perp(-))^\perp$-stable
sets are principal filters.

Conversely, for a principal filter of the form $\aucl a$ and using the
previous parts of this Lemma, we have that 
$({}^\perp \aucl
a)^\perp=(\adcl(\operatorname{inf}(\aucl
a)))^\perp=\aucl(\operatorname{sup}(\adcl(\operatorname{inf}(\aucl
a))))=\aucl(\operatorname{sup}(\adcl a))=\aucl a$. 

To see that $f$ and $g$ are mutually inverse, take first $a\in A$. 
Then $g(f(a)) = \inf(\aucl a) = a$.
In the other direction, let $P\in \pobot(\kkpi)$. We know that $P$ is
a 
principal filter, thus $P=\aucl a$ for some $a\in A$ and we have
$f(g(P)) = \aucl(\inf P) = \aucl(\inf(\aucl a)) = \aucl a = P$.

\ad{\ref{lem:galois-infimp}} The fact that $\inf(P \imp_{\bot}
Q)=\inf(P\imp Q)$ follows also from the previous results. Indeed, we
have that $\operatorname{inf}(P \imp_{\perp}
Q)=\operatorname{inf}\big(({}^\perp(P \imp
Q))^\perp\big)=\operatorname{inf}((\adcl( \operatorname{inf}(P \imp
Q)))^\perp)=\operatorname{inf}(\adcl(\operatorname{inf}P \rightarrow
\operatorname{inf}Q)^\perp)= \operatorname{inf}((\adcl (a \rightarrow
b))^\perp)=\operatorname{\inf}(\aucl(\operatorname{sup}(\adcl(a
\rightarrow b))))=a \rightarrow b = \operatorname{inf} P \rightarrow
\operatorname{inf} Q= \operatorname{inf}(P \imp Q)$. In the above
computations we used that: $P= \aucl a,\, Q= \aucl b$ and the parts
(1), (2) and (3) already proved. The last equality is proved below. 

From the
preceding claim we know that given $P,Q$ as above, there are elements
$a,b \in A$ such that $P=\aucl a$ and $Q=\aucl b$. We have
\[
\aucl a\imp \aucl b \;=\; ^\perp(\aucl a)\push\aucl b 
\;=\; \adcl(\inf(\aucl a))\push\aucl b
\;=\; \adcl a\push\aucl b
\;=\; \setof{c\to d}{c\leq a, b\leq d}
\]
and thus $\inf(\aucl a\imp\aucl b)\; =\; a\to b$ by monotonicity of
the arrow.
\end{proof}

\begin{theorem}
The associated indexed triposes $\trip(\mathcal A)$ and
  $\trip_\bot(\mathcal K_\mathcal A)$ are equivalent (see Definitions
  ~\ref{lem:oca-islat}-\ref{lem:oca-islat-func} and
  in~\ref{lem:aks-iord}-\ref{lem:aks-iord-func} respectively).
\end{theorem}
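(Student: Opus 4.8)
The plan is to transport predicates across the pointwise bijection furnished by Lemma~\ref{lem:galois}. By Lemma~\ref{lem:galois},~\eqref{lem:galois-pbot} the maps $a\mapsto\aucl a$ and $P\mapsto\inf P$ are mutually inverse bijections between $A$ and $\pobot(\Pi)$, so applying them fibrewise I would set, for each set $I$,
\[
\sigma_I:A^I\to\pobot(\Pi)^I,\ \varphi\mapsto(i\mapsto\aucl{\varphi(i)})
\qquad\text{and}\qquad
\tau_I:\pobot(\Pi)^I\to A^I,\ \varphi\mapsto(i\mapsto\inf\varphi(i)),
\]
which are again mutually inverse bijections. Reindexing on both sides is precomposition with a function $f:J\to I$, whereas $\sigma$ and $\tau$ act by postcomposition with a fixed fibrewise bijection; hence the naturality squares commute strictly, $\sigma_J(f^*\varphi)=f^*(\sigma_I\varphi)$, and similarly for $\tau$. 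Once monotonicity is in hand these are indexed monotone maps, and since they are fibrewise bijective it will suffice, by Lemma~\ref{lem:equiv-iord}, to prove that $\tau_I$ both preserves and reflects the entailment order; the inverse $\sigma_I$ then does the same and we obtain an equivalence (in fact an isomorphism).

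The heart of the argument is to bring both entailments into a common ``implication form''. On the $\trip(\mathcal A)$ side, for predicates $\alpha,\beta:I\to A$ the relation $\alpha\ent\beta$ unfolds to $\exists r\in\Phi\ \forall i\ r\,\alpha(i)\leq\beta(i)$; exactly as in the proof of Theorem~\ref{theo:pakstooca} --- using the adjunctor $\come$ together with axioms \rpaxha\ and \rpaxe --- this rewrites, uniformly in $i$ and with the single realizer $\come r$, as $\exists r\in\Phi\ \forall i\ r\leq\alpha(i)\to\beta(i)$.

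On the $\trip_\bot(\mathcal K_\mathcal A)$ side I would unfold $\varphi\ent\psi$ to $\exists t\in\operatorname{QP}\ \forall i\ t\orth\varphi(i)\imp\psi(i)$ and invoke the two defining identifications of $\mathcal K_\mathcal A$. First, $\operatorname{QP}=\Phi$. Second, since $\Perp$ is the order $\leq$, for any subset $P\subseteq\Pi=A$ the condition $t\orth P$ says precisely that $t$ is a lower bound of $P$, that is $t\leq\inf P$. Together with Lemma~\ref{lem:galois},~\eqref{lem:galois-infimp}, which gives $\inf(\varphi(i)\imp\psi(i))=\inf\varphi(i)\to\inf\psi(i)$ for $\varphi(i),\psi(i)\in\pobot(\Pi)$, the entailment becomes $\exists t\in\Phi\ \forall i\ t\leq\inf\varphi(i)\to\inf\psi(i)$. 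Since $\tau_I\varphi(i)=\inf\varphi(i)$, this is literally the implication form of $\tau_I\varphi\ent\tau_I\psi$ obtained above; hence $\varphi\ent\psi$ holds in $\trip_\bot(\mathcal K_\mathcal A)$ if and only if $\tau_I\varphi\ent\tau_I\psi$ holds in $\trip(\mathcal A)$, which is the preservation-and-reflection statement required.

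The step I expect to be the main obstacle is the uniform-in-$i$ passage between the application form $r\,\alpha(i)\leq\beta(i)$ and the implication form $r\leq\alpha(i)\to\beta(i)$: one must ensure that a single realizer serves all $i$ at once, which is exactly what the adjunctor guarantees and which mirrors the maneuver already carried out in Theorem~\ref{theo:pakstooca}. The remaining ingredients --- that $t\orth P\iff t\leq\inf P$ because $\Perp$ is $\leq$, and that infima of the implication values collapse to $\to$ via Lemma~\ref{lem:galois} --- are routine consequences of the results established above.
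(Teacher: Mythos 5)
Your proposal is correct and follows essentially the same route as the paper's proof: transport predicates fibrewise along the bijection of Lemma~\ref{lem:galois}~\eqref{lem:galois-pbot}, reduce $t\orth P$ to $t\leq\inf P$ (since $\Perp$ is $\leq$), collapse $\inf$ of the implication sets via Lemma~\ref{lem:galois}~\eqref{lem:galois-infimp}, and pass between the application form and the implication form of entailment in $\trip(\mathcal A)$ using \rpaxha\ and \rpaxe. The only cosmetic difference is that you make the meet-in-the-middle structure and the strict naturality of the postcomposition maps explicit, whereas the paper runs the same chain of equivalences in one direction.
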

\begin{proof}
Let $I$ be a set. The elements of $\trip(\mathcal A)(I)$ are
functions $\varphi:I\to A$, and the elements of
$\trip_\bot(\mathcal K_\mathcal A)(I)$ are functions $\widehat{\varphi}:I\to\pobot(\Pi)$.

 Post-composition with $f$ and $g$ from 
Lemma~\ref{lem:galois}-\ref{lem:galois-pbot} induces a bijection between
$\trip(\mathcal A)(I)$ and $\trip_\bot(\mathcal K_\mathcal A)(I)$, and
it remains to show that this bijection is compatible with the
entailment orderings.

Let $\varphi,\psi:I\to A$ be two predicates in $\trip(\mathcal A)(I)$,
with corresponding predicates $f\circ\varphi,f\circ\psi$ in
$\trip_\bot(\mathcal K_\mathcal A)(I)$. Then we can reformulate the
entailment $f\circ\varphi\ent f\circ\psi$ in $\trip_\bot(\mathcal
K_\mathcal A)(I)$ as follows:
\begin{align*} f\circ\varphi\ent f\circ\psi &\equi \exists
a\in\Phi\;\forall i\in I\qdot a\perp \aucl(\varphi i)\imp \aucl(\psi
i)\\ &\equi \exists a\in\Phi\;\forall i\in I\;\forall b\in
[\aucl(\varphi i)\imp \aucl(\psi i)] \quad \text{then} \quad a\leq b\\ &\equi \exists
a\in\Phi\;\forall i\in I \qdot a\leq \inf[\aucl(\varphi i)\imp
\aucl(\psi i)]\\ &\equi \exists a\in\Phi\;\forall i\in I \qdot a\leq
\varphi(i)\to\psi(i),
\end{align*} and this is equivalent to the entailment
$\varphi\ent\psi$ in $\trip(\mathcal A)(I)$:
\[ \exists a\in\Phi\;\forall i\in I \qdot a\varphi(i)\leq\psi(i)
\] by axioms \rpaxha\ and \rpaxe\ in Definition \ref{defi:oca}.

\end{proof}

\section{Internal realizability in
  $\koca$\hspace*{1pt}s}\label{section:nine} We
have shown that the class of ordered combinatory algebras that,
besides a filter of distinguished truth values are equipped with an
implication, an adjunctor and satisfy a completeness condition with
respect to the infimum over arbitrary subsets -- i.e.: $\koca$ s-- is
rich enough as to allow the Tripos construction and as such its
objects can be taken as the basis of the categorical perspective on
classical realizability --\`a la Streicher--. In this section we show
that we can define realizability in this type of combinatory algebras,
and thus, to define realizability in higher-order arithmetic.

\begin{definition}\label{defi:LomegaLang} Consider a set of constants of
kinds, one of its elements is denoted by $o$. The language of kinds is
given by the following grammar:
\[ \sigma, \tau :: = c\quad |\quad \sigma \to \tau
\] Consider an infinite set of variables labeled by kinds
$x^\tau$. Suppose that we have infinitely many variables labeled of
the kind $\tau$ for each kind $\tau$. Consider also a set of constants
$a^\tau, b^\sigma, \dots$ labeled with a kind. The language
$\mathcal{L}^\omega$ of order $\omega$ is defined by the following
grammar:
\[ M^\sigma, N^{\sigma\to\tau}, A^o, B^o :: = x^\sigma\quad |\quad
a^\sigma\quad |\quad (\lambda x^\sigma.M^\tau)^{\sigma\to\tau}\quad
|\quad (N^{\sigma\to\tau} M^\sigma)^\tau\quad |\quad (A^o\Rightarrow
B^o)^o\quad |\quad (\forall x^\tau.A^o)^o
\] $o$ represents the type of truth values. The expressions labeled
by $o$ are called ``formul\ae''. The symbols $\to$ and $\Rightarrow$,
when iterated, are associated on the right side. On the other hand,
the application, when iterated, are associated on the left side.
\end{definition}
\begin{definition}\label{defi:LomegaTyp} Let $A$ be a 
  $\koca$ and consider a set of variables $\mathcal{V}=\{x_1, x_2,
  \dots\}$. A declaration is a string of the shape $x_i:A^o$.  A
  context is a string of the shape $x_1:A_1^o, \dots, x_k:A_k^o$,
  i.e.: contexts are finite sequences of declarations. The contexts
  will be often denoted by capital Greek letters: $\Delta, \Gamma,
  \Sigma$. A sequent is a string of the shape $x_1:A_1^o, \dots,
  x_k:A_k^o\vdash p:B^o$ where $p$ is a polynomial of $A[x_1, \dots,
  x_k]$. The left side of a sequent is a context. When we do not
  make the declarations of the context of a sequent explicit, we will
  write it as~$\Gamma\vdash p:B^o$. Typing rules are trees with leaves
  of the shape
\begin{prooftree} \AxiomC{$S_1$} \AxiomC{$\dots$} \AxiomC{$S_h$}
\RightLabel{\scriptsize(Rule)} \TrinaryInfC{$S_{h{+}1}$}
\end{prooftree} where $h\geq 0$ and $S_1, \dots, S_{h{+}1}$ are
sequents. The typing rules for $\mathcal{L}^\omega$ are the following:
\begin{flushright}
  \begin{prooftree} \AxiomC{} \RightLabel{\scriptsize(ax)}
\LeftLabel{\scriptsize{(where $x_i:A^o_i$ appears in $\Gamma$)}}
\UnaryInfC{$\Gamma\vdash x_i:A^o_i$}
  \end{prooftree}
  \begin{prooftree} \AxiomC{$\Gamma, x:A^o\vdash p:B^o$}
\RightLabel{\scriptsize{$(\to_i)$}} \UnaryInfC{$\Gamma\vdash
\come(\lambda^*x\ p):(A^o\Rightarrow B^o)^o$}
  \end{prooftree}
  \begin{prooftree} \AxiomC{$\Gamma\vdash p:(A^o\Rightarrow B^o)^o$}
\AxiomC{$\Gamma\vdash q:A^o$} \RightLabel{\scriptsize{$(\to_e)$}}
\BinaryInfC{$\Gamma\vdash pq:B^o$}
  \end{prooftree}
  \begin{prooftree} \AxiomC{$\Gamma\vdash p:A^o$}
\RightLabel{\scriptsize{$(\forall_i)$}} \LeftLabel{\scriptsize{(where
$x^\sigma$ does not appears free in $\Gamma$)}}
\UnaryInfC{$\Gamma\vdash p:(\forall x^\sigma A^o)^o$}
  \end{prooftree}
  \begin{prooftree} \AxiomC{$\Gamma\vdash p:(\forall x^\sigma A^o)^o$}
\RightLabel{\scriptsize{$(\forall_e)$}} \UnaryInfC{$\Gamma\vdash
p:(A^o\{x^\sigma:=M^\sigma\})$}
  \end{prooftree}
\end{flushright}
\end{definition}
\begin{definition}\label{defi:LomegaSem} Let us consider
  $\mathcal{A}=(A,\leq, \rapp,  \to, \Phi, \ck, \cs, \ce, \cc)$ a
  $\mathcal{^KOCA}$. We define the interpretation of
  $\mathcal{L}^\omega$ as follows:
  \begin{enumerate}
    \item For \emph{kinds}: The interpretation of a constant $c$ is a
set $\llbracket c \rrbracket$. In particular, the constant $o$ is
interpreted as the underlying set of $\mathcal{A}$, i.e.: $\llbracket
o\rrbracket = A$. Given two kinds $\sigma, \tau$, the interpretation
$\llbracket \sigma \to \tau\rrbracket$ is the set of functions
$\llbracket \tau\rrbracket^{\llbracket \sigma \rrbracket}$

\item For \emph{expressions}: In order to interpret expressions,
we start choosing an assignment $\mathfrak{a}$ for the variables
$x^\sigma$ such that $\mathfrak{a}(x^\sigma)\in\llbracket
\sigma\rrbracket$. As it is usual in semantics, the substitution-like
notation $\{x^\sigma:=s\}$ affecting an assignment $\mathfrak{a}$
--or an interpretation using $\mathfrak{a}$--, modifies it by
redefining $\mathfrak{a}$ over $x^\sigma$ as the statement
$\mathfrak{a}\{x^\sigma:=s\}(x^\sigma):= s$. We proceed similarly for
interpretations.
      \begin{itemize}
      \item For an expression of the shape $x^\sigma$, its
interpretation is $\llbracket x^\sigma\rrbracket =
\mathfrak{a}(x^\sigma)$.
      \item For an expression of the shape $\lambda x^\sigma M^\tau$,
its interpretation is the function $\llbracket \lambda x^\sigma
M^\tau\rrbracket\in\llbracket \sigma \to \tau\rrbracket$ defined as
$\llbracket \lambda x^\sigma M^\tau\rrbracket(s):= \llbracket
M^\tau\rrbracket\{x^\sigma:=s\}$ for all $s\in\llbracket \sigma
\rrbracket$.
      \item For an expression of the shape $(N^{\sigma\to
\tau}M^\sigma)^\tau$ its interpretation is $\llbracket (N^{\sigma\to
\tau}M^\sigma)^\tau\rrbracket:=\llbracket N^{\sigma\to \tau}\rrbracket
\big(\llbracket M^\sigma \rrbracket\big) $
      \item For an expression of the shape $(A^o\Rightarrow B^o)^o$
its interpretation is $\llbracket (A^o\Rightarrow B^o)^o\rrbracket:=
\llbracket A^o\rrbracket \to \llbracket B^o\rrbracket$.
      \item For an expression of the shape $(\forall x^\sigma A^o)^o$
its interpretation is \[\llbracket (\forall x^\sigma
A^o)^o\rrbracket:= \operatorname{inf}\big\{\llbracket
A^o\rrbracket\{x^\sigma:=s\}\ \big|\ s\in\llbracket
\sigma\rrbracket\big\}\]
      \end{itemize}
  \end{enumerate} We say that $\mathcal{A}$ satisfies a sequent
$x_1:A_1^o, \dots, x_k:A^k\vdash p:B^o$ if and only if for all
assignment $\mathfrak{a}$ and for all~$b_1, \dots, b_k\in A$, if $b_1
\leq \llbracket A_1^o\rrbracket, \dots, b_k\leq \llbracket
A^o_k\rrbracket$ then $p\{x_1:=b_1, \dots, x_k:=b_k\}\leq \llbracket
B^o\rrbracket$. In this case we write that:~$\mathcal{A}\models
x_1{:}A_1^o, \dots, x_k{:}A^k\vdash p{:}B^o$.

  A rule: \begin{prooftree} \AxiomC{$S_1$} \AxiomC{$\dots$}
\AxiomC{$S_h$} \RightLabel{\scriptsize(Rule)}
\TrinaryInfC{$S_{h{+}1}$}
\end{prooftree} is said to be \emph{adequate} if and only if for every
$\mathcal{A}\in\mathcal{^KOCA}$, if $\mathcal{A}\models S_1, \dots,
S_h$ then $\mathcal{A}\models S_{h{+}1}$.
\end{definition}
\begin{theorem}\label{theo:LomegaAdeq} The rules of the typing system
appearing in Definition \ref{defi:LomegaTyp}, are adequate.
\end{theorem}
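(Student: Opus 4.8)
The plan is to prove adequacy by inspecting each typing rule of Definition~\ref{defi:LomegaTyp} separately and checking that it preserves the satisfaction relation $\models$ of Definition~\ref{defi:LomegaSem}. Before doing so I would record a \emph{semantic substitution lemma}: for every formula $A^o$, object variable $x^\sigma$, term $M^\sigma$ of kind $\sigma$, and every ambient assignment,
\[
\llbracket A^o\{x^\sigma:=M^\sigma\}\rrbracket
\;=\;
\llbracket A^o\rrbracket\{x^\sigma:=\llbracket M^\sigma\rrbracket\},
\]
where the right-hand side modifies the assignment as in Definition~\ref{defi:LomegaSem}. This is proved by a routine induction on $A^o$, the $\forall$-clause requiring only that the bound variable be chosen distinct from $x^\sigma$. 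It is exactly what is needed to handle the rule $(\forall_e)$.

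The rules $(\mathrm{ax})$, $(\to_e)$, $(\forall_i)$ and $(\forall_e)$ are then direct. For $(\mathrm{ax})$ the conclusion is immediate from the definition of $\models$, since substituting $b_i$ for $x_i$ returns $b_i\leq\llbracket A_i^o\rrbracket$. For $(\to_e)$, if under a given $\vec b$ with $b_i\leq\llbracket A_i^o\rrbracket$ one has $p\{\vec x:=\vec b\}\leq\llbracket A^o\rrbracket\to\llbracket B^o\rrbracket$ and $q\{\vec x:=\vec b\}\leq\llbracket A^o\rrbracket$, then the half-adjunction axiom~\rpaxha\ gives $p\{\vec x:=\vec b\}\,\llbracket A^o\rrbracket\leq\llbracket B^o\rrbracket$, and monotonicity of application together with $q\{\vec x:=\vec b\}\leq\llbracket A^o\rrbracket$ yields $(pq)\{\vec x:=\vec b\}\leq\llbracket B^o\rrbracket$. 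For $(\forall_i)$ I would use that $x^\sigma$ is not free in $\Gamma$, so that the bounds $\llbracket A_i^o\rrbracket$ are independent of the value assigned to $x^\sigma$; a fixed admissible $\vec b$ then satisfies the premise for every modified assignment, and since the element $p\{\vec x:=\vec b\}$ does not depend on $x^\sigma$ it is a lower bound of $\{\llbracket A^o\rrbracket\{x^\sigma:=s\}\mid s\in\llbracket\sigma\rrbracket\}$, hence below its infimum $\llbracket(\forall x^\sigma A^o)^o\rrbracket$. For $(\forall_e)$ I instantiate that infimum at $s=\llbracket M^\sigma\rrbracket$ and rewrite the resulting bound by the substitution lemma.

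The one rule demanding genuine work is the introduction rule $(\to_i)$, where the adjunctor $\come$ and combinatory completeness come in. Fixing $\vec b$ with $b_i\leq\llbracket A_i^o\rrbracket$ and writing $P:=p\{\vec x:=\vec b\}$ (a term still containing the abstracted variable $x$), combinatory completeness (Theorem~\ref{theo:calculusinA}) gives $(\lambda^* x.P)\,\llbracket A^o\rrbracket\leq P\{x:=\llbracket A^o\rrbracket\}$; since $\llbracket A^o\rrbracket\leq\llbracket A^o\rrbracket$, the premise of the rule yields $P\{x:=\llbracket A^o\rrbracket\}\leq\llbracket B^o\rrbracket$, whence $(\lambda^* x.P)\,\llbracket A^o\rrbracket\leq\llbracket B^o\rrbracket$. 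The adjunctor axiom~\rpaxe\ applied to this single inequality then produces
\[
\come(\lambda^* x.P)\;\leq\;\llbracket A^o\rrbracket\to\llbracket B^o\rrbracket\;=\;\llbracket(A^o\Rightarrow B^o)^o\rrbracket,
\]
which is the desired conclusion. The points to verify carefully are that the object-level substitution $\{\vec x:=\vec b\}$ commutes with both $\come$ and the $\lambda^*$-abstraction (so that $(\come(\lambda^* x\,p))\{\vec x:=\vec b\}=\come(\lambda^* x.P)$), and that testing the inequality only at the single point $\llbracket A^o\rrbracket$ already supplies the hypothesis required by~\rpaxe. This interplay is the main, though modest, obstacle; everything else reduces to the two arrow axioms and the defining property of the infimum.
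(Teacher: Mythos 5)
Your proof is correct and follows essentially the same route as the paper: a rule-by-rule verification in which $(\mathrm{ax})$, $(\to_e)$, $(\forall_i)$, $(\forall_e)$ reduce to \rpaxha, monotonicity and the defining property of the infimum, while $(\to_i)$ combines combinatory completeness (Theorem~\ref{theo:calculusinA}) with the adjunctor axiom~\rpaxe\ after instantiating the premise at $x:=\llbracket A^o\rrbracket$. The only difference is that you state explicitly the semantic substitution lemma and the commutation of object-level substitution with $\lambda^*$ and $\come$, facts the paper's proof uses tacitly.
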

\begin{proof} For {\scriptsize{(ax)}} is evident.

  For the implication rules:
  \begin{itemize}
    \item[{$(\to)_i$}] Assume $\mathcal{A}\models \Gamma, x:A^o\vdash
p:B^o$ where $\Gamma=x_1:A_1^o, \dots, x_k:A_k^o$. Consider an
assignment $\mathfrak{a}$ and~$b_1, \dots, b_k\in A$ such that
$b_i\leq \llbracket A^o_i\rrbracket$. We get:
      \begin{center}
      \begin{tabular}{rcl} $(\lambda^*x p)\{x_1:=b_1, \dots,
x_k:=b_k\}\llbracket A^o\rrbracket$&$=$&$(\lambda^*x p\{x_1:=b_1,
\dots, x_k:=b_k\})\llbracket A^o\rrbracket\leq$\\ &&$p\{x_1:=b_1,
\dots, x_k:=b_k, x:=\llbracket A^o\rrbracket\}\leq$\\ &&$\llbracket
B^o\rrbracket$
      \end{tabular}
      \end{center} the last inequality by the assumption
$\mathcal{A}\models \Gamma, x:A^o\vdash p:B^o$.

      Applying the adjunction property we deduce that~$\ce(\lambda^*x
p)\{x_1:=b_1, \dots, x_k:=b_k\}\leq \llbracket (A^o\Rightarrow
B^o)^o\rrbracket$. Since the above is valid for all the assignments,
we conclude $\mathcal{A}\models \Gamma\vdash \ce(\lambda^*x\
p):(A^o\Rightarrow B^o)^o$.

      \item[{$(\to)_e$}] Assume $\mathcal{A}\models \Gamma\vdash
p:(A^o\Rightarrow B^o)^o$ and $\mathcal{A}\models \Gamma\vdash q: A^o$
where $\Gamma=x_1:A_1^o, \dots, x_k:A_k^o$. Consider an assignment
$\mathfrak{a}$ and~$b_1, \dots, b_k\in A$ such that $b_i\leq
\llbracket A^o_i\rrbracket$. By hypothesis we get:
      \begin{center}
      \begin{tabular}{rcl} $p\{x_1:=b_1, \dots,
x_k:=b_k\}$&$\leq$&$\llbracket A^o\rrbracket \to \llbracket
B^o\rrbracket$\\ &and&\\ $q\{x_1:=b_1, \dots,
x_k:=b_k\}$&$\leq$&$\llbracket A^o\rrbracket$\\
      \end{tabular}
      \end{center} and by monotonicity of the application in
$\mathcal{A}$ we obtain:
        \[pq\{x_1:=b_1, \dots, x_k:=b_k\}\quad \leq\quad (\llbracket
A^o\rrbracket \to \llbracket B^o\rrbracket)\quad \llbracket
A^o\rrbracket\quad \leq\quad\llbracket B^o\rrbracket\] Since the above
is valid for all the assignments, we conclude that $\mathcal{A}\models
\Gamma\vdash pq:\llbracket B^o\rrbracket$.
  \end{itemize} For the quantifiers:
  \begin{itemize}
    \item[{$(\forall)_i$}] Assume $\mathcal{A}\models \Gamma\vdash
p:A^o$ and that $x^\sigma$ does not appear free in $\Gamma$, where
$\Gamma=x_1:A_1^o, \dots, x_k:A^o_k$. Consider an
assignment~$\mathfrak{a}$ and~$b_1, \dots, b_k\in A$ such that
$b_i\leq \llbracket A^o_i\rrbracket$.

      Since $A_1^o, \dots, A_k^o$ does not depend upon $x^\sigma$, by
the assumption $\mathcal{A}\models \Gamma\vdash p:A^o$, we get:
      \begin{center}$p\{x_1:=b_1, \dots, x_k:=b_k\}\leq\llbracket
A^o\rrbracket\{x^\sigma:=s\}$ for all $s\in\llbracket
\sigma\rrbracket$
      \end{center} Then $p\{x_1:=b_1, \dots,
x_k:=b_k\}\leq\operatorname{inf} \{\llbracket
A^o\rrbracket\{x^\sigma:=s\}\ |\ s\in\llbracket
\sigma\rrbracket\}=\llbracket (\forall x^\sigma A^o)^o\rrbracket$. We
conclude as before that $\mathcal{A}\models \Gamma\vdash p:(\forall
x^\sigma A^o)^o$.
      \item[$(\forall)_e$] Assume $\mathcal{A}\models \Gamma\vdash
p:(\forall x^\sigma A^o)^o$, where $\Gamma=x_1:A_1^o, \dots,
x_k:A^o_k$. Consider an assignment $\mathfrak{a}$ and~$b_1, \dots,
b_k\in A$ such that $b_i\leq \llbracket A^o_i\rrbracket$. By the
assumption $\mathcal{A}\models \Gamma\vdash p:(\forall x^\sigma
A^o)^o$ we get:
      \[p\{x_1:=b_1, \dots, x_k:=b_k\}\leq \llbracket
A^o\rrbracket\{x^\sigma:=s\} \mbox{ for all }s\in\llbracket
\sigma\rrbracket\] Since $\llbracket M^\sigma\rrbracket\in\llbracket
\sigma\rrbracket$ we obtain:
      \[p\{x_1:=b_1, \dots, x_k:=b_k\}\leq \llbracket A^o\rrbracket
\{x^\sigma:=\llbracket M^\sigma\rrbracket \}= \llbracket
A^o\{x^\sigma:=M^\sigma\}\rrbracket\] We conclude as before that
$\mathcal{A}\models \Gamma\vdash p:A^o\{x^\sigma:=M^\sigma\}$.
  \end{itemize}
\end{proof} The language of higher-order Peano arithmetic
--$(\operatorname{PA})^\omega$--is an instance of $\mathcal{L}^\omega$
where we distinguish a constant of kind $I$ and two constants of
expression $0^I$ and $\operatorname{succ}^{I\to I}$.
\begin{definition} For each kind $\sigma$ we define the Leibniz equality
$=_\sigma$ as follows:
\[ x_1^\sigma =_\sigma x_2^\sigma :\equiv \forall y^{\sigma\to o} \Big
((y^{\sigma \to o} x_1^\sigma)^o \Rightarrow (y^{\sigma \to
o}x_2^\sigma)^o\Big)^o
\]
\end{definition} The axioms of Peano arithmetic are equalities over the
kind $I$, except for $\forall x^I ((\operatorname{succ}^{I\to I}x^I
=_I 0^I)\Rightarrow \bot)^o$ --which we abbreviate $\forall x^I
(\operatorname{succ}^{I\to I}x^I\neq 0^I)^o$-- and for the induction
principle.

\begin{definition}
Fixed $\mathcal{A} \in ^\mathcal{K}\mathcal{OCA}$ , we say that $a\in
A$ realizes a formula $F^o$ if $a\leq \llbracket 
F^o\rrbracket$. We write $a\Vdash_{\mathcal{A}} F^o$ for ``$a$
realizes $F^o$'', or simply as $a\Vdash F^o$, whenever it does not
cause confusion. 

The theory of
$\mathcal{A}$ is the set of closed formul\ae~ $F^o$ 
such that there is an $a\in \Phi$ which realizes $F^o$. The theory of
$\mathcal{A}$ is denoted by $\operatorname{th}(\mathcal{A})$.  
\end{definition}
In this presentation of Krivine's realizability, the orthogonality is
implicit in the implication $\to$ that is part of the structure of the
$\koca$.
\begin{lemma}\label{lm:equialitiesAreRealized}
Let us consider an equality $M^\sigma=_\sigma N^\sigma$ such that $\llbracket
M^\sigma \rrbracket = \llbracket N^\sigma \rrbracket$. Then the
equality $M^\sigma=_\sigma N^\sigma$ is realized by $\come(\lambda^*x.x)$.
\end{lemma}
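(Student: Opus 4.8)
The plan is to unfold the interpretation of the Leibniz equality through the semantic clauses of Definition~\ref{defi:LomegaSem} and then reduce the realizability inequality to the single combinatorial fact $\comi\,a\le a$. First I would fix an assignment $\mathfrak a$ and compute $\sem{M^\sigma=_\sigma N^\sigma}$. Expanding the outer quantifier $\forall y^{\sigma\to o}$ turns the interpretation into an infimum ranging over $s\in\sem{\sigma\to o}=A^{\sem\sigma}$, and expanding the implication $\Rightarrow$ together with the two applications $y^{\sigma\to o}M^\sigma$ and $y^{\sigma\to o}N^\sigma$ gives
\[
\sem{M^\sigma=_\sigma N^\sigma}=\operatorname{inf}\bigl\{\,s(\sem{M^\sigma})\to s(\sem{N^\sigma})\ \bigm|\ s\in A^{\sem\sigma}\,\bigr\}.
\]

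Next I would invoke the hypothesis $\sem{M^\sigma}=\sem{N^\sigma}$. Writing $a_s:=s(\sem{M^\sigma})=s(\sem{N^\sigma})$, each factor of the infimum collapses to $a_s\to a_s$, so it suffices to prove $\come(\lambda^* x.x)\le a_s\to a_s$ for every $s$. At this point I would record that $\lambda^* x.x=\comi$ by Definition~\ref{defi:basiccomb}, so the proposed realizer is exactly $\come\comi$.

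The core step is then a one-line combinatory computation. By Lemma~\ref{lem:basicomb2} we have $\comi\,a_s\le a_s$, and feeding this into axiom \rpaxe\ of Definition~\ref{defi:ioca} (instantiated with $a:=\comi$, $b:=a_s$, $c:=a_s$) yields $\come\comi\le a_s\to a_s$. Since this bound holds uniformly in $s$, the universal property of the infimum gives $\come\comi\le\operatorname{inf}\{a_s\to a_s\}=\sem{M^\sigma=_\sigma N^\sigma}$; that is, $\come(\lambda^* x.x)$ realizes the equality, and as the argument is independent of $\mathfrak a$ it applies to every assignment satisfying the hypothesis.

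I do not expect a genuine obstacle here: the only point requiring care is the bookkeeping in the semantic unfolding, in particular tracking how the substitution $\{y^{\sigma\to o}:=s\}$ propagates through the application clause so that the two sides of the implication become $s(\sem{M^\sigma})$ and $s(\sem{N^\sigma})$. Once the hypothesis collapses these to a common $a_s$, the whole statement reduces to the inequality $\comi\,a_s\le a_s$ promoted through the adjunctor axiom \rpaxe. Finally, since $\come,\ck,\cs\in\Phi$ and $\Phi$ is closed under application, the realizer $\come\comi$ in fact lies in $\Phi$ (recall $\comi=\cs\ck\ck$), so the equality belongs to $\operatorname{th}(\mathcal A)$.
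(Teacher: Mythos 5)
Your proposal is correct and follows essentially the same route as the paper's proof: fix an arbitrary $s\in A^{\sem{\sigma}}$, use the hypothesis $\sem{M^\sigma}=\sem{N^\sigma}$ to collapse both sides of the implication, apply the identity inequality $(\lambda^*x.x)\,a\leq a$ together with axiom \rpaxe\ to get $\come(\lambda^*x.x)\leq a_s\to a_s$, and pass to the infimum over all assignments of $y^{\sigma\to o}$. The only cosmetic difference is that you route the identity step through $\comi=\cs\ck\ck$ and Lemma~\ref{lem:basicomb2} rather than citing combinatory completeness directly, and you add the (true, but not required) observation that the realizer lies in $\Phi$.
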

\begin{proof}
Consider an $f\in\llbracket \sigma\to o\rrbracket = A^{\llbracket \sigma
\rrbracket}$, since $\llbracket M^\sigma \rrbracket = \llbracket
N^\sigma\rrbracket$ we have $f(\llbracket M^\sigma
\rrbracket)=f(\llbracket N^\sigma \rrbracket)$. We conclude that
$(\lambda^* x.x)\llbracket y^{\sigma\to o}M^\sigma \rrbracket \leq
\llbracket y^{\sigma\to o}M^\sigma \rrbracket = \llbracket
y^{\sigma\to o}N^\sigma \rrbracket$ and $\come (\lambda ^* x.x)\leq \llbracket
y^{\sigma \to o}M^\sigma \Rightarrow y^{\sigma \to
  o}N^\sigma\rrbracket$ for every assignment of
$y^{\sigma\to o}$. Hence $\come (\lambda^*
x.x)\Vdash M^\sigma =_\sigma N^\sigma$. 
\end{proof}
\begin{proposition} In every $\koca$ $\mathcal{A}$ all axioms of Peano
  arithmetic 
  but the induction principle are in $\operatorname{th}(\mathcal{A})$. 
\end{proposition}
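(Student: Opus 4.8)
The plan is to classify the non-induction Peano axioms into two groups and realize each group separately. The first and largest group consists of the equational axioms: these are universally quantified equalities $\forall \bar x^I (s^I =_I t^I)^o$ where $s$ and $t$ are terms built from $0^I$ and $\operatorname{succ}^{I\to I}$ that denote the same function under any interpretation of the kind constant $I$. The second group consists of the single inequality axiom $\forall x^I (\operatorname{succ}^{I\to I} x^I \neq 0^I)^o$.

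First I would handle the equational axioms. The key observation is that once we fix an interpretation $\sem{I}$ of the base kind together with elements $\sem{0^I}\in\sem{I}$ and a function $\sem{\operatorname{succ}^{I\to I}}:\sem{I}\to\sem{I}$, every closed term of kind $I$ denotes a definite element of $\sem{I}$, and the two sides of a Peano equational axiom denote \emph{the same} element for every assignment to the universally quantified variables. Therefore, by Lemma \ref{lm:equialitiesAreRealized}, the matrix $M^\sigma =_\sigma N^\sigma$ of each such axiom is realized by $\come(\lambda^*x.x)$, with a realizer that does not depend on the assignment to the quantified variables. The remaining work is to push this realizer through the outer universal quantifiers: since the interpretation of $\forall x^\tau.A^o$ is the infimum over $s\in\sem{\tau}$ of $\sem{A^o}\{x^\tau:=s\}$, and the single element $\come(\lambda^*x.x)\in\Phi$ lies below each $\sem{A^o}\{x^\tau:=s\}$, it lies below the infimum as well. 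Formally this is exactly an application of the adequacy of the rule $(\forall)_i$ from Theorem \ref{theo:LomegaAdeq}. Hence each equational axiom lands in $\operatorname{th}(\mathcal A)$, with the same proof-like realizer in $\Phi$.

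Next I would treat the inequality axiom $\forall x^I(\operatorname{succ}^{I\to I} x^I =_I 0^I \Rightarrow \bot)^o$. Here the situation is more delicate because the interpretation $\sem{I}$, $\sem{0^I}$, $\sem{\operatorname{succ}^{I\to I}}$ is \emph{arbitrary}, so $\operatorname{succ}$ need not be injective and its image need not avoid $\sem{0^I}$. The point is that realizability does not require the denoted equality to be false; it only requires a proof-like element below the interpretation of the implication. Unfolding the Leibniz definition, $\sem{\operatorname{succ}^{I\to I}x^I =_I 0^I}$ is an infimum over predicates $f\in A^{\sem I}$ of $(f\,\sem{\operatorname{succ} x})\to(f\,\sem{0})$, and we must show some fixed $a\in\Phi$ satisfies $a \le \sem{\operatorname{succ} x =_I 0}\to\bot$ for every value $s\in\sem I$ assigned to $x^I$. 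I would construct $a$ by instantiating the universally quantified predicate variable $y^{I\to o}$ in the Leibniz equality with a suitable $f:\sem I\to A$ that separates $\sem{\operatorname{succ}(s)}$ from $\sem{0}$ in the KOCA, so that the premise $\sem{\operatorname{succ} x =_I 0}$ is forced below a value whose implication into $\bot$ is realized by a combinator assembled from $\comi$, $\come$, and the projections, exactly as in the treatment of the other arithmetic connectives; one then closes under the outer $\forall x^I$ as in the equational case.

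The main obstacle I anticipate is the inequality axiom, specifically choosing the separating predicate $f$ and verifying that the resulting realizer lands in $\Phi$ uniformly in the assignment. The equational axioms are routine given Lemma \ref{lm:equialitiesAreRealized} and the adequacy of $(\forall)_i$, but the $\operatorname{succ} x \neq 0$ axiom genuinely uses that, in the second-order Leibniz encoding, falsity of an equation need not be semantic: one exploits the freedom in the predicate variable to manufacture a realizer. Care must be taken that the chosen $f$ depends only on the denotations in a way that keeps the assembled combinator inside the filter $\Phi$, so that the realized formula truly belongs to $\operatorname{th}(\mathcal A)$ and not merely to the set of formulas realized by some element of $A$.
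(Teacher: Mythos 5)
Your decomposition is the same as the paper's, and the equational half --- Lemma \ref{lm:equialitiesAreRealized} followed by closure under the adequate rule $(\forall)_i$ --- is exactly the paper's argument. The gap is the inequality axiom, which you explicitly leave as an anticipated obstacle rather than prove, and your framing of that step contains an error that would block any attempt to fill it on your terms. You insist that the interpretation of $I$, $0^I$, $\operatorname{succ}^{I\to I}$ is arbitrary and that ``falsity of an equation need not be semantic''. For Leibniz equality the opposite is true: $\sem{\operatorname{succ}^{I\to I}x^I =_I 0^I}\{x^I:=s\}$ is an infimum over \emph{all} functions $f\in A^{\sem{I}}$ of $f(\sem{\operatorname{succ}^{I\to I}}(s))\to f(\sem{0^I})$, so the ``separating predicate'' you want exists precisely when $\sem{\operatorname{succ}^{I\to I}}(s)\neq\sem{0^I}$ as elements of the set $\sem{I}$. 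If the interpretation identifies these two elements, every $f$ takes equal values on them, the equality evaluates to $\inf_{a\in A}(a\to a)$, which is realized by $\come\comi\in\Phi$, and hence its negation cannot be realized in any consistent $\koca$. So the proposition tacitly assumes the standard interpretation, in which $\sem{0^I}$ is not in the image of $\sem{\operatorname{succ}^{I\to I}}$ --- exactly what the paper uses when it asserts $\sem{\forall x^I[\operatorname{succ}^{I\to I}x^I =_I 0^I \Rightarrow\bot]} = \sem{\top\Rightarrow\bot}\to\sem{\bot}$ --- and no cleverness in choosing $f$ can remove that hypothesis.

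Granting the standard interpretation, what is still missing from your proposal is the realizer itself, and it is short. With $f$ sending $\sem{\operatorname{succ}^{I\to I}}(s)$ to $\top$ and $\sem{0^I}$ to $\bot$, one gets $\sem{\operatorname{succ}^{I\to I}x^I =_I 0^I}\{x^I:=s\}\leq\top\to\bot$ for every $s$; by antitonicity of $\to$ in its first argument, the interpretation of the whole axiom is then $\geq(\top\to\bot)\to\bot$, uniformly in $s$, so the outer quantifier costs nothing. Now $(\top\to\bot)\to\bot$ is realized by $\come(\lambda^*x.x\coms)$, which lies in $\Phi$ by Theorem \ref{theo:calculusinA}: since $\coms\leq\top$, monotonicity of application and \rpaxha\ give $(\top\to\bot)\coms\leq(\top\to\bot)\top\leq\bot$, hence $(\lambda^*x.x\coms)(\top\to\bot)\leq\bot$, and \rpaxe\ yields $\come(\lambda^*x.x\coms)\leq(\top\to\bot)\to\bot$. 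This step --- the premise $\top\to\bot$ refutes itself when applied to an arbitrary element of the filter --- is the actual content of the paper's proof of this case, and it is precisely what your sketch of ``a combinator assembled from $\comi$, $\come$, and the projections'' does not supply.
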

\begin{proof} By \ref{lm:equialitiesAreRealized} all the axioms which
  are equalities are realized by $\come (\lambda^*x.x)$. Moreover, the
axiom which say that $0$ is not a successor is also realized:
It is easy verify that $\llbracket \forall
x^I[\operatorname{succ}^{I\to I}x^I=_I 0^I\Rightarrow
\bot]\rrbracket=\llbracket \top \Rightarrow \bot\rrbracket \to
\llbracket \bot\rrbracket$. By monotonicity $\llbracket \top\Rightarrow
\bot\rrbracket\coms\leq \llbracket \top\Rightarrow \bot\rrbracket
\llbracket \top\rrbracket \leq \llbracket \bot\rrbracket$. Thus
$\llbracket \top\Rightarrow \bot\rrbracket \coms\leq \llbracket
\bot\rrbracket$ and hence $\come (\lambda^*x.x\coms)\Vdash \llbracket
\forall x^I[\operatorname{succ}^{I\to I}x^I=_I 0^I\Rightarrow 
\bot]\rrbracket$  
\end{proof}
\begin{definition} The formula $\mathbb{N}(z^I)$ is defined as:
\[ \forall x^{I \to o}(\forall y^I ((x^{I\to o}y^I)^o\Rightarrow
(x^{I\to o}(\operatorname{succ}^{I\to I}y^I))^o\Rightarrow ((x^{I\to
o}0^I)^o \Rightarrow (x^{I\to o}z^I)^o)^o
\]
\end{definition} 
\begin{remark}
Since the equational axioms of Peano arithmetic and the axiom
$\forall x^I[\operatorname{succ}^{I\to I}x^I=_I 0^I\Rightarrow  
\bot]$ are universal formul\ae\ and, therefore, imply their 
relativization to $\mathbb{N}$. The relativization of the induction
principle to $\mathbb{N}$ is $\forall x^I (\mathbb{N}(x^I)\Rightarrow
\mathbb{N}(x^I))$, which is realized by means of $\come
(\lambda^*x.x)$. Thus, relativizing to $\mathbb{N}$ all proofs of higher-order
arithmetic, we find realizers in $\Phi$ for their theorems by means of
adequacy \ref{theo:LomegaAdeq}. In other words,
$\operatorname{th}(\mathcal{A})$ contains $\operatorname{th}((PA)^\omega)$. 
\end{remark}  
\COMMENT{\appendix

\section{Combinators and Hilbert style propositional
  logic}\label{app:hilbert}

The \emph{Curry-Howard isomorphism}~\cite{kn:sorensen2006lectures}
describes a 
correspondence between proofs and typed programs.

Proofs can be formalized in different systems, most importantly 
`natural deduction style' and `Hilbert style'. These two ways of
representing
proofs correspond to two ways of writing programs --
$\lambda$-calculus vs.\ 
combinators. This appendix presents the Hilbert style system for the 
implicational fragment of classical propositional logic, for which the
combinator calculus based on $\comk,\coms,\comc$ serves as 
\emph{term language}.

The formulas $\varphi,\psi,\theta,\dots$ of this system are built up
from 
propositional variables $X,Y,Z,\dots$
and implication $\to$.

This system has three axioms
\begin{enumerate}
\item
  $(\varphi\to\psi\to\theta)\to(\varphi\to\psi)\to\varphi\to\theta$
\item $\varphi\to\psi\to\varphi$
\item $((\varphi\to\psi)\to\varphi)\to\varphi$
\end{enumerate}
which under the Curry Howard isomorphism correspond to the combinators
$\comk,\coms$ and $\comc$ (the convention is that the arrow
associates to
the right, i.e.\ fully bracketed the second axiom reads as 
$\varphi\to(\psi\to\varphi)$).
The only logical rule is modus ponens:
\[
\AXC{$\varphi\to\psi$}
\AXC{$\varphi$}
\BIC{$\psi$}
\DP
\]
which under the isomorphism is used to type applications of
combinators.

More on combinators and the Curry Howard isomorphism can be found
e.g.\ 
in~\cite[Chapter~5]{kn:sorensen2006lectures}.

The reference treats combinators only for intuitionistic logic, but
$\comc$ 
is treated in the context of natural deduction in Chapter~6.}

\bibliographystyle{plain}

\end{document}